\tikzstyle{vertex}=[auto=left,circle,fill=black,minimum size=12pt,inner sep=0pt]
\newtheorem{observation}[theorem]{Observation}
\newtheorem{Lemma}[theorem]{Lemma}
\newtheorem{algorithm}[theorem]{Algorithm}
\newtheorem{Corollary}[theorem]{Corollary}
\newcommand{\diam}{\mathrm{diam}}
\newcommand{\bbox}{\mathbin{\Box}}
\newcommand{\parents}{\mathrm{parents}}
\newcommand{\children}{\mathrm{children}}
\newcommand{\Swap}{\mathrm{Swap}}
\def\tfn{\footnote[1]{This research was funded by Natural Sciences and Engineering Research Council of Canada grant number 2014-06571.}}
\title{$\gamma$-Graphs of Trees\tfn}[$\gamma$-Graphs of Trees]
\keywords{domination, gamma graphs, reconfiguration}
\begin{document}

\begin{abstract}
For a graph $G = (V, E)$, the $\gamma$-graph of $G$, denoted $G(\gamma) = (V(\gamma), E(\gamma))$, is the graph whose vertex set is the collection of minimum dominating sets, or $\gamma$-sets of $G$, and two $\gamma$-sets are adjacent in $G(\gamma)$ if they differ by a single vertex and the two different vertices are adjacent in $G$.  In this paper, we consider $\gamma$-graphs of trees.  We develop an algorithm for determining the $\gamma$-graph of a tree, characterize which trees are $\gamma$-graphs of trees, and further comment on the structure of $\gamma$-graphs of trees and its connections with Cartesian product graphs, the set of graphs which can be obtained from the Cartesian product of graphs of order at least two. 
\end{abstract}

\section{Introduction}

Let $G$ be a graph and let $v$ be a vertex of $G$.  The \emph{open neighbourhood} of a vertex $v$, denoted $N(v)$, is the set of vertices adjacent to $v$, and the \emph{closed neighbourhood} of a vertex $v$, denoted $N[v]$, is $N(v) \cup \{v\}$.  For a subset of vertices $S$, we say $N(S) = \cup_{x \in S} N(x)$ and $N[S] = S \cup N(S)$.  A subset of vertices $D$ is a \emph{dominating set} of $G$ if $N[D] = V(G)$, that is, every vertex not in $D$ is adjacent to a vertex in $D$.  The \emph{domination number} of a graph $G$, denoted $\gamma(G)$, is the minimum cardinality of a dominating set of $G$.  A dominating set of minimum cardinality is said to be a \emph{$\gamma$-set}.

For a dominating set $D$ of $G$ and a vertex $x \in D$, the set of \emph{$D$-private neighbours} of $x$, denoted $pn(x, D)$, is $N[x] - N[D - x]$, that is, the set of vertices in the closed neighbourhood of $x$ and not in the closed neighbourhood of any other vertex in $D$.  If $x \in pn(x, D)$, then $x$ is a \emph{$D$-self private neighbour} in $D$ and if $y \neq x$ and $y \in pn(x, D)$, then $y$ is an \emph{$D$-external private neighbour} of $x$ in $D$.  If $D$ is a $\gamma$-set, then every vertex in $D$ has a private neighbour.

The \emph{$\gamma$-graph} of a graph $G$, introduced by Fricke et al.~\cite{FHHH11}, is a graph denoted $G(\gamma)$, has its vertex set as the $\gamma$-sets of $G$, and two $\gamma$-sets $D_1$ and $D_2$ are adjacent in $G(\gamma)$ if there are vertices $u \in D_1$ and $v \in D_2$ such that $D_2 = (D_1 - \{u\})\cup \{v\}$ and $uv \in E(G)$.  Starting with $D_1$, we think of making a \emph{swap}, that is, changing vertex $u$ for $v$, to form $D_2$.  A slightly different model, in which $uv$ need not be an edge in $G$, was introduced independently by Subramanian and Sridharan~\cite{SS08}; we do not consider this model here.  Fricke et al.~\cite{FHHH11} studied properties of $\gamma$-graphs, and raised the following open questions:
\begin{enumerate}
\item Is $\Delta(T(\gamma)) = O(n)$ for every tree $T$ of order $n$?
\item Is $\diam(T(\gamma)) = O(n)$ for every tree $T$ of order $n$?
\item Is $|V(T(\gamma))| \le 2^{\gamma(T)}$ for every tree $T$?
\item Which graphs are $\gamma$-graphs of trees?
\item Which graphs are $\gamma$-graphs?  Can you construct a graph $H$ that is not a $\gamma$-graph of any graph $G$?
\item For which graphs $G$ is $G(\gamma) \cong G$?
\item Under what conditions is $G(\gamma)$ a disconnected graph?
\end{enumerate}
The first three questions were solved by Edwards, MacGillivray, and Nasserasr~\cite{E15} and the fifth question was answered by Connelly, Hutson, and Hedetneimi~\cite{CHH11}; the other three questions remain open.  The question of which graphs are $\gamma$-graphs of trees was restated in a recent survey by Mynhardt and Nasserasr~\cite{MN} as a primary direction of study in this area.

Some $\gamma$-graphs of trees were determined by Fricke et al.~\cite{FHHH11}, as well as a couple of general properties.  A \emph{stepgrid} $SG(k)$ is the induced subgraph of the $k \times k$ grid graph $P_k \bbox P_k$ defined as follows: $SG(k) = (V(k), E(k))$, where 
\begin{align*}
V(k) &= \{(i, j): 1 \le i, j \le k, \ i + j \le k + 2\}, \\ 
E(k) &= \{((i, j), (i', j')): i' = i, \ j' = j + 1; \ i' = i + 1, \ i' = j\}.
\end{align*}

\begin{prop} \cite{FHHH11}
\begin{itemize}
\item $K_{1, n}(\gamma) \cong K_1$.
\item For $k \ge 3$, $K_{2, n}(\gamma) \cong K_{1, 2n}$.
\item $P_{3k}(\gamma) \cong K_1$.
\item $P_{3k + 2}(\gamma) \cong P_{k + 2}$.
\item $P_{3k + 1}(\gamma) \cong SG(k + 1)$.  
\end{itemize}
\end{prop}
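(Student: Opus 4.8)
The plan is to handle each family by first enumerating all of its $\gamma$-sets explicitly and then reading off the swap-adjacencies. In every case the leverage is a counting bound: since a closed neighbourhood covers at most three vertices in a path (two at an endpoint) and at most $n+1$ in $K_{2,n}$, the value of $\gamma$ is forced and the $\gamma$-sets are tightly constrained. The two trivial items come first. For $K_{1,n}$ with $n \ge 2$ the centre dominates everything, so $\gamma = 1$; a single leaf fails to dominate the other leaves, so the centre is the unique $\gamma$-set and $K_{1,n}(\gamma) \cong K_1$. For $P_{3k}$, a set of $k$ vertices covering all $3k$ vertices must consist of $k$ interior vertices with pairwise disjoint closed neighbourhoods, forcing the unique efficient dominating set $\{v_2, v_5, \dots, v_{3k-1}\}$; hence $P_{3k}(\gamma) \cong K_1$. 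I would establish the forcing by a short induction from the endpoint $v_1$.

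For $K_{2,n}$ (I read the stated bound as the condition $n \ge 3$), write the parts as $\{a,b\}$ and $\{v_1,\dots,v_n\}$. No single vertex dominates, so $\gamma = 2$; checking the size-two subsets shows the $\gamma$-sets are exactly $\{a,b\}$ together with $\{a,v_i\}$ and $\{b,v_i\}$ for each $i$ (the pairs $\{v_i,v_j\}$ fail once $n\ge 3$), giving $2n+1$ vertices. Since $a v_i, b v_i \in E$, the set $\{a,b\}$ swaps to each $\{b,v_i\}$ and each $\{a,v_i\}$, so it is adjacent to all $2n$ others; and because $ab \notin E$ and $v_iv_j \notin E$, no two of the remaining sets are adjacent. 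Thus $\{a,b\}$ is a universal vertex joined to an independent set, i.e.\ $K_{2,n}(\gamma) \cong K_{1,2n}$.

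For the two path families I would encode a $\gamma$-set by the increasing position sequence $i_1 < \dots < i_{k+1}$ of its chosen vertices; domination is equivalent to $i_1 \le 2$, $i_{k+1} \ge n-2$, and every gap $i_{j+1}-i_j \le 3$. For $P_{3k+2}$ ($\gamma=k+1$) the span forces the gap multiset to be either all $3$'s (two sets, starting at $v_1$ or $v_2$) or $k-1$ threes and a single two anchored at $v_2$ ($k$ sets, one for each gap that is the two), for $k+2$ sets in all. Ordering them as $D_0, B_1, \dots, B_k, D_{k+1}$ so that consecutive sets move the single short gap one notch to the right, I would check that each consecutive pair has symmetric difference $\{v_p, v_{p+1}\}$ (a legal swap) while any non-consecutive pair differs in at least two positions; this yields exactly $P_{k+2}$.

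The substantive item is $P_{3k+1}$, where $\gamma = k+1$ but the span leaves two units of slack, so the $\gamma$-sets form a genuinely two-parameter family. Measuring each position against the reference pattern $r_j = 3j-1$, the deficits $e_j = r_j - i_j$ form a nondecreasing sequence in $\{0,1,2\}$ with $e_1 \le 1$ and $e_{k+1}\ge 1$; such a sequence is a step function determined by the index $p$ where it first reaches $1$ and the index $q$ where it first reaches $2$, subject to $1\le p \le q \le k+2$ and excluding the two corners that would push a chosen vertex off the path. The map $(p,q) \mapsto (p,\,k+3-q)$ is then a bijection onto the vertex set of $SG(k+1)$, matching the counts. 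The crux is the adjacency check: a legal swap moves one chosen vertex by one position, i.e.\ changes a single $e_m$ by $\pm 1$, and preserving the step-function shape forces this to shift exactly one of $p$ or $q$ by one; translating through the bijection, this is precisely a unit horizontal or vertical move in the grid, with the excluded corners and the diagonal $p=q$ reproducing the stepgrid boundary. I expect this final correspondence — verifying that single-vertex path-swaps are in exact bijection with the grid edges of $SG(k+1)$, including the behaviour along the two boundaries — to be the main obstacle, and the place where the triangular shape of the stepgrid genuinely emerges rather than a full square.
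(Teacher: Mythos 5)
The paper does not prove this proposition at all: it is quoted verbatim from Fricke et al.\ \cite{FHHH11} as background, so there is no in-paper argument to compare against. Judged on its own, your direct enumeration is sound and is the natural way to verify these facts. The star and $P_{3k}$ cases are correct (note $K_{1,n}(\gamma)\cong K_1$ requires $n\ge 2$, as you observe, and the condition ``$k\ge 3$'' in the second bullet is indeed best read as $n\ge 3$). The $K_{2,n}$ case is a complete and correct check. For the paths, one slip: the domination criterion for $P_n$ at the right end is $i_{k+1}\ge n-1$, not $n-2$ as you wrote; your subsequent enumerations for $P_{3k+2}$ and $P_{3k+1}$ implicitly use the correct bound, so nothing downstream breaks, but the stated criterion should be fixed. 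The $P_{3k+2}$ count of $k+2$ sets and the linear swap order are right. For $P_{3k+1}$, your deficit encoding $e_j=3j-1-i_j$, the reduction to nondecreasing $\{0,1,2\}$-valued step sequences parametrized by the thresholds $(p,q)$ with $1\le p\le q\le k+2$ minus two corner pairs, and the bijection $(p,q)\mapsto(p,k+3-q)$ onto $V(k+1)$ all check out (the counts agree: both equal $\tfrac{(k+2)(k+3)}{2}-2$). The adjacency argument --- a swap moves one $e_m$ by $\pm 1$, and monotonicity forces $m$ to sit at a threshold, so exactly one of $p,q$ shifts by one --- is the right mechanism and does yield precisely the grid edges of the staircase region; you have sketched rather than fully written out the verification that no collisions of positions occur and that every admissible unit move is realized by a genuine swap, but both are routine under your encoding. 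With the off-by-one corrected and that last verification written out, this is a complete proof.
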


\begin{theorem} \cite{FHHH11} \label{thm:tree-wo}
Let $T$ be a tree, and let $x \in V(T)$ be a vertex that does not appear in any $\gamma$-set of $T$.  Let $T_1, T_2, \ldots, T_k$ be the disjoint subtrees created by deleting $x$ from $T$, and let $x_i \in T_i$ be the vertex in subtree $T_i$ adjacent to the vertex $x$.  Let $D_i$ be the set of minimum dominating sets of subtree $T_i$ and let $T_i^{x_i}(\gamma)$ be the $\gamma$-graph of subtree $T_i$ using only those $\gamma$-sets of $D_i$ that do not contain $x_i$.  Then 
\[
T(\gamma) = T_1(\gamma) \bbox T_2(\gamma) \bbox \cdots \bbox T_k(\gamma) - (T_1^{x_1}(\gamma) \bbox T_2^{x_2}(\gamma) \bbox \cdots \bbox T_k^{x_k}(\gamma)).
\]
\end{theorem}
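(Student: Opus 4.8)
The plan is to set up the natural bijection between the vertex sets of the two graphs and then check that it preserves adjacency in both directions. Write $V(T_i(\gamma))$ for the collection of $\gamma$-sets of $T_i$ and $V(T_i^{x_i}(\gamma))$ for those $\gamma$-sets of $T_i$ that avoid $x_i$. The first task is an additivity lemma: I claim $\gamma(T) = \sum_{i=1}^k \gamma(T_i)$. For the lower bound, note that if $D$ is any $\gamma$-set of $T$ then, since $x \notin D$ and the $x_i$ are the only neighbours of $x$, each set $D_i := D \cap V(T_i)$ must already dominate $T_i$; hence $|D_i| \ge \gamma(T_i)$, and summing gives $|D| \ge \sum_i \gamma(T_i)$. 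For the upper bound, fixing a $\gamma$-set $D_i^0$ of each $T_i$ and adjoining $x$ produces a dominating set of $T$ of size $1 + \sum_i \gamma(T_i)$, so $\gamma(T) \le 1 + \sum_i \gamma(T_i)$. This is exactly the step where the hypothesis does the work: if the upper bound were tight, then $\{x\} \cup \bigcup_i D_i^0$ would be a $\gamma$-set containing $x$, contradicting the assumption that $x$ lies in no $\gamma$-set. Hence $\gamma(T) = \sum_i \gamma(T_i)$.

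With additivity in hand, the vertex correspondence falls out. If $D$ is a $\gamma$-set of $T$ then each $D_i = D \cap V(T_i)$ dominates $T_i$ and $\sum_i |D_i| = \gamma(T) = \sum_i \gamma(T_i)$, forcing every $D_i$ to be a $\gamma$-set of $T_i$; moreover $x$ must be dominated, so some $x_i \in D_i$. Conversely, given $\gamma$-sets $D_i$ of the $T_i$ with at least one containing its $x_i$, the union $\bigcup_i D_i$ dominates every $T_i$ as well as $x$ and has size $\sum_i \gamma(T_i) = \gamma(T)$, so it is a $\gamma$-set of $T$. This identifies $V(T(\gamma))$ with precisely those tuples $(D_1, \dots, D_k) \in \prod_i V(T_i(\gamma))$ for which not all $D_i$ avoid $x_i$, that is, with $V(\prod_i T_i(\gamma)) \setminus V(\prod_i T_i^{x_i}(\gamma))$, matching the vertex set of the right-hand side.

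It then remains to match edges. The key observation is that a single swap in $T$ can only take place inside one subtree: if $D' = (D - \{u\}) \cup \{v\}$ with $uv \in E(T)$ and both $D, D'$ are $\gamma$-sets, then $u, v \ne x$ (as $x$ belongs to no $\gamma$-set), so $uv$ is an edge of $T - x$ and thus lies within a single $T_i$. Consequently $D$ and $D'$ agree in every coordinate except the $i$-th, where $D_i' = (D_i - \{u\}) \cup \{v\}$ is obtained from $D_i$ by a swap along an edge of $T_i$; this is exactly an edge of the Cartesian product $\prod_i T_i(\gamma)$. The reverse implication is immediate: a product edge changes one coordinate by an admissible swap, which is a swap along an edge of $T$. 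Since each $T_i^{x_i}(\gamma)$ is by definition the induced subgraph of $T_i(\gamma)$ on the $\gamma$-sets avoiding $x_i$, the product $\prod_i T_i^{x_i}(\gamma)$ is an induced subgraph of $\prod_i T_i(\gamma)$, and deleting its vertices yields precisely the induced subgraph on the tuples identified above. Combining the vertex bijection with the two-way edge correspondence gives the claimed equality, under the natural identification of a $\gamma$-set with the tuple of its restrictions.

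I expect the only genuinely delicate point to be the additivity lemma, and specifically the contrapositive use of the hypothesis to rule out $\gamma(T) = 1 + \sum_i \gamma(T_i)$; everything after that is bookkeeping about Cartesian products, provided one is careful that ``$-$'' denotes deletion of an induced subgraph and that the Cartesian product of induced subgraphs is again an induced subgraph of the product.
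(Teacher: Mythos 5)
Your proof is correct. Note, however, that the paper does not prove this statement at all: Theorem~\ref{thm:tree-wo} is quoted from Fricke et al.~\cite{FHHH11} as a known result, so there is no in-paper proof to compare against. Your argument is the natural (and, as far as I can tell, the standard) one: the additivity $\gamma(T)=\sum_i\gamma(T_i)$, forced by the hypothesis that $x$ lies in no $\gamma$-set; the resulting bijection between $\gamma$-sets of $T$ and tuples of $\gamma$-sets of the $T_i$ in which at least one coordinate contains its $x_i$; and the observation that every swap avoids $x$ and hence lives in a single $T_i$, which is exactly a Cartesian-product edge. You correctly flag the two points that need care (ruling out $\gamma(T)=1+\sum_i\gamma(T_i)$, and reading ``$-$'' as vertex deletion so that both sides are induced on the same tuple set), and both are handled.
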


\begin{theorem} \cite{FHHH11} \label{thm:gt:con}
The $\gamma$-graph $T(\gamma)$ of every tree $T$ is a connected graph.
\end{theorem}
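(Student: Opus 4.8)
The plan is to prove, by induction on $|V(T)|$, the equivalent statement that any two $\gamma$-sets $D_1, D_2$ of $T$ are joined by a path in $T(\gamma)$; that is, one can transform $D_1$ into $D_2$ by a sequence of single-vertex swaps along edges of $T$, each intermediate set again being a $\gamma$-set. The base cases (small trees, and stars, for which the preceding Proposition already gives the $\gamma$-graph) are immediate. For the inductive step I would distinguish according to whether some vertex of $T$ lies in no $\gamma$-set, so that the decomposition of Theorem~\ref{thm:tree-wo} is available.

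Suppose first that some $x \in V(T)$ lies in no $\gamma$-set. Deleting $x$ yields subtrees $T_1, \dots, T_k$, each smaller than $T$, so by the induction hypothesis each $T_i(\gamma)$ is connected; consequently the Cartesian product $T_1(\gamma) \bbox \cdots \bbox T_k(\gamma)$ is connected as well. By Theorem~\ref{thm:tree-wo}, $T(\gamma)$ is this product with the sub-box $T_1^{x_1}(\gamma) \bbox \cdots \bbox T_k^{x_k}(\gamma)$ deleted, so it remains to show that removing this sub-box leaves a connected graph. The vertices of $T(\gamma)$ are exactly the product tuples having, in at least one coordinate $i$, a $\gamma$-set of $T_i$ that contains $x_i$. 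To connect two such tuples I would correct their coordinates one at a time along product edges, always keeping some coordinate in its ``$x_i$-present'' state so as never to re-enter the deleted sub-box. This succeeds provided that, in each factor, the $\gamma$-sets of $T_i$ containing $x_i$ themselves induce a connected subgraph of $T_i(\gamma)$, which lets us route that coordinate without passing through $T_i^{x_i}(\gamma)$.

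When no vertex is excluded from all $\gamma$-sets, I would argue directly using the local structure of $T$ at a leaf. Choose a leaf $\ell$ at maximum distance from a fixed root, with support vertex $s$; maximality forces every neighbour of $s$ other than its parent to be a leaf. Two facts drive the reduction: in any $\gamma$-set, a support $s$ with at least two leaf children must itself lie in the set while its leaves are excluded; and if $s$ has a single leaf child $\ell$ with $\ell$ in the set and $s$ not, then $(D \setminus \{\ell\}) \cup \{s\}$ is again a $\gamma$-set, so $\ell \to s$ is always a valid swap (it is a dominating set of the same cardinality $\gamma(T)$). Using such swaps I would first move both $D_1$ and $D_2$ to $\gamma$-sets containing $s$, and then pass to a smaller tree $T^-$ obtained by stripping the pendant leaves at $s$, transporting the connecting path guaranteed by induction back up to $T$.

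The main obstacle lies in making these two reductions rigorous. In the direct case, deleting the pendant leaves can lower the domination number -- as already happens for $P_4$, where removing the end leaf drops $\gamma$ from $2$ to $1$ -- so $D_1$ and $D_2$ need not restrict to $\gamma$-sets of $T^-$; one must either choose the reduction to preserve $\gamma$ or route such configurations through Theorem~\ref{thm:tree-wo} instead. In the decomposition case, the crux is exactly the connectivity of the product after deleting the sub-box, which in turn rests on the auxiliary claim that the $\gamma$-sets containing a prescribed vertex induce a connected subgraph. I would therefore strengthen the induction hypothesis to assert simultaneously that $T(\gamma)$ is connected and that, for each vertex $v$, the $\gamma$-sets of $T$ containing $v$ induce a connected subgraph of $T(\gamma)$, so that both reductions can be closed.
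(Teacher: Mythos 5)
Your outline is not yet a proof: it correctly identifies its own two load-bearing gaps but closes neither. First, in the decomposition case, connectivity of the product minus the sub-box genuinely does rest on the auxiliary claim that, for each factor $T_i$, the $\gamma$-sets containing $x_i$ induce a connected subgraph of $T_i(\gamma)$. Folding this into a strengthened induction hypothesis does not discharge it --- it creates a new obligation that must itself be verified for $T$ in \emph{both} cases of your induction (in the decomposition case you would need to show that the tuples containing a prescribed vertex $w$ of $T$ remain connected after the sub-box is deleted, and in the direct case you give no argument at all), and none of that is carried out. Second, the direct case collapses at exactly the point you flag: stripping the pendant leaves at a deepest support can lower $\gamma$ (your own $P_4$ example), so $D_1$ and $D_2$ need not restrict to $\gamma$-sets of $T^-$, and ``route such configurations through Theorem~\ref{thm:tree-wo} instead'' is not available, since in this case every vertex of $T$ lies in some $\gamma$-set by hypothesis. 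As written, neither branch of the induction terminates in a complete argument.

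The paper itself only cites this theorem from Fricke et al., but its Section~2 already contains the ingredients for a much shorter, case-free proof that you should adopt: root $T$ at any vertex $c$ and let $S$ be the unique highest $\gamma$-set (Theorem~\ref{highest}). Given any $\gamma$-set $D \neq S$, Lemma~\ref{prop:tree:high:pn} supplies some $x \in D - \{c\}$ with no child in $pn(x,D)$; then $pn(x,D) \subseteq \{x, p\}$ where $p$ is the parent of $x$, so $(D - \{x\}) \cup \{p\}$ is a $\gamma$-set adjacent to $D$ in $T(\gamma)$ of strictly smaller height. Iterating connects every $\gamma$-set to $S$ by a path, with no induction on $|V(T)|$ and no appeal to Theorem~\ref{thm:tree-wo}. (This is essentially what Theorem~\ref{thm:tree:vertex} proves in algorithmic form.) The same argument, with the root placed at $v$, proves your auxiliary claim for free --- the raising swaps never remove the root, so every $\gamma$-set containing $v$ is joined to the highest $\gamma$-set rooted at $v$ through $\gamma$-sets containing $v$ --- which is worth recording, but it is a corollary of the direct proof rather than a prerequisite for it.
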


\begin{theorem} \cite{FHHH11}\label{bipartite}
For any tree $T$, $T(\gamma)$ is $C_n$-free, for any odd $n \ge 3$.  [That is, $T(\gamma)$ is bipartite.]
\end{theorem}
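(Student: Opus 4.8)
The plan is to exploit the fact that a tree is bipartite. I would fix a proper $2$-colouring of $V(T)$ with colour classes $A$ and $B$, so that every edge of $T$ joins a vertex of $A$ to a vertex of $B$. I would then colour the vertices of $T(\gamma)$ by assigning to each $\gamma$-set $D$ the parity $c(D) = |D \cap A| \bmod 2$, and show that $c$ is a proper $2$-colouring of $T(\gamma)$. Since a graph is bipartite precisely when it admits a proper $2$-colouring, and bipartite graphs are exactly those with no cycle of odd length, this immediately yields the claim.

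The key step is to examine a single edge of $T(\gamma)$. Such an edge corresponds to a swap: there are $\gamma$-sets $D_1, D_2$ and vertices $u \in D_1$, $v \in D_2$ with $D_2 = (D_1 - \{u\}) \cup \{v\}$ and $uv \in E(T)$. Because $T$ is bipartite, $u$ and $v$ lie in opposite colour classes; hence replacing $u$ by $v$ either removes a vertex from $A$ while adding one to $B$, or vice versa. In either case $|D_2 \cap A| = |D_1 \cap A| \pm 1$, so the parities differ and $c(D_1) \neq c(D_2)$.

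It then follows that $c$ properly $2$-colours $T(\gamma)$, so $T(\gamma)$ is bipartite and therefore contains no cycle of odd length; in particular it is $C_n$-free for every odd $n \ge 3$. I expect no genuine obstacle here: the only point requiring care is the verification that a single swap changes $|D \cap A|$ by exactly one, and this is immediate from the definition of adjacency in $T(\gamma)$, which forces the two swapped vertices to be adjacent in $T$ and hence to lie in different colour classes. The argument in fact uses nothing about $T$ beyond bipartiteness, so the same reasoning shows $G(\gamma)$ is bipartite for any bipartite graph $G$.
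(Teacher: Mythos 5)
Your proof is correct, and since the paper only cites this result from Fricke et al.\ without reproducing a proof, there is nothing to diverge from: the parity argument you give (two-colour $T$, assign each $\gamma$-set the parity of $|D\cap A|$, and note that a swap along an edge of $T$ moves exactly one vertex between colour classes and so flips the parity) is the standard and essentially the original argument. Your closing remark is also right: the proof uses only bipartiteness of $T$, so it yields the stronger statement that $G(\gamma)$ is bipartite for every bipartite graph $G$.
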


The main objective of this paper is the investigation of open question 4.  In Section~\ref{sec:alg}, we develop a method for constructing the $\gamma$-graph of a tree.  In Section~\ref{sec:gamma-tree}, we characterize those trees which are $\gamma$-graphs of trees.  Finally, in Section~\ref{sec:tree-gamma}, we state some further results for graphs which are $\gamma$-graphs of trees.

The following observations are used, frequently without reference, throughout the paper.

\begin{observation}\label{basic}
If $D_1$ and $D_2=(D_1- \{u\}) \cup \{v\}$ with $u\sim v$ are two distinct $\gamma$-sets of a graph, then $pn(D_1, u)=pn (D_2, v)\subseteq \{u,v\}$.
\end{observation}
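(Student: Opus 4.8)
The plan is to reduce both private-neighbour sets to one explicit set, prove the equality from domination alone, and then obtain the containment from the tree structure. Write $D := D_1 \setminus \{u\} = D_2 \setminus \{v\}$ for the common part of the two $\gamma$-sets; since $D_1 \neq D_2$ we have $u, v \notin D$. Unwinding the definition, the set of $D_1$-private neighbours of $u$ is $pn(u, D_1) = N[u] \setminus N[D_1 \setminus \{u\}] = N[u] \setminus N[D]$, and likewise $pn(v, D_2) = N[v] \setminus N[D]$.

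First I would establish the equality. Because $D_1 = D \cup \{u\}$ dominates $G$, we have $N[D] \cup N[u] = V(G)$, and hence $N[u] \setminus N[D] = V(G) \setminus N[D]$. The identical argument applied to the dominating set $D_2 = D \cup \{v\}$ gives $N[v] \setminus N[D] = V(G) \setminus N[D]$. Thus both private-neighbour sets coincide with the single set $P := V(G) \setminus N[D]$, which yields $pn(u, D_1) = pn(v, D_2)$. Note that this step uses only that $D_1$ and $D_2$ are dominating sets, not minimality or any structural hypothesis on $G$.

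For the containment I would argue as follows. Since $P = N[u] \setminus N[D] \subseteq N[u]$ and, symmetrically, $P \subseteq N[v]$, we have $P \subseteq N[u] \cap N[v]$. It therefore suffices to show $N[u] \cap N[v] = \{u, v\}$. Certainly $u, v \in N[u] \cap N[v]$ because $u \sim v$. Conversely, any $w \in N[u] \cap N[v]$ with $w \notin \{u, v\}$ would satisfy $w \sim u$ and $w \sim v$, so that $u$, $v$, $w$ together with the edge $uv$ form a triangle; as $G$ is a tree it contains no such cycle, a contradiction. Hence $P \subseteq \{u, v\}$, completing the proof.

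I expect the only real subtlety to be recognising that the containment, in contrast to the equality, genuinely requires the absence of triangles, and is where the tree hypothesis in force throughout the paper enters: adjacent vertices of a tree share no common neighbour, which is exactly the fact used to pin $N[u] \cap N[v]$ down to $\{u, v\}$. For emphasis, this cannot be dropped for arbitrary graphs: in $K_3$ the two singleton $\gamma$-sets are adjacent in the $\gamma$-graph yet their common private-neighbour set is all three vertices, so the inclusion into $\{u, v\}$ fails there.
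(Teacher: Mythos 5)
Your proof is correct. The paper offers no proof at all for this observation (it is stated as a bare observation and used throughout without justification), so there is no argument to compare against; yours is the natural one. The equality part is handled exactly right: writing $D = D_1 \setminus \{u\} = D_2 \setminus \{v\}$ and using that each of $D \cup \{u\}$ and $D \cup \{v\}$ dominates shows both private-neighbour sets equal $V(G) \setminus N[D]$, and as you note this needs only the domination property, not minimality. Your observation about the containment is also well taken, and is in fact a small but genuine defect in the paper's statement rather than in your proof: as written the observation claims the conclusion for ``a graph,'' but $P \subseteq N[u] \cap N[v]$ only collapses to $\{u,v\}$ when $u$ and $v$ have no common neighbour, i.e.\ in a triangle-free graph; your $K_3$ example ($D_1 = \{u\}$, $D_2 = \{v\}$, $pn(u, D_1) = V(K_3)$) shows the containment genuinely fails otherwise. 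Since the paper invokes this observation only for trees (in Lemma~\ref{lem:gt:H}, Proposition~\ref{adjacent}, and Proposition~\ref{prop:gt:cart-spn}), the hypothesis you supply is always available where it is used, but the statement would more accurately read ``of a triangle-free graph.''
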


\begin{observation}\label{spn}
If $u\in D$ is such that $pn(D,u)=\{u\}$, then for each $v\in N(u)$, we have that $D$ is adjacent to $D_v=(D- \{u\}) \cup \{v\}$ .
\end{observation}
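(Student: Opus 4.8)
The plan is to verify directly that $D_v$ is a $\gamma$-set and then invoke the definition of adjacency in $G(\gamma)$. First I would unpack the hypothesis $pn(D,u)=\{u\}$. By definition $pn(D,u) = N[u] - N[D-u]$, so this equality encodes two facts simultaneously: $u$ is itself a private neighbour, so $u \notin N[D-u]$, meaning $u$ is not dominated by any vertex of $D$ other than $u$; and no other vertex of $N[u]$ is private, so every $w \in N(u)$ lies in $N[D-u]$ and is therefore dominated by some vertex of $D$ distinct from $u$.

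Next I would check that $D_v$ has the right cardinality. Since $v \in N(u)$, if $v$ were already in $D$ then $v \in D-u$ would dominate $u$, contradicting $u \in pn(D,u)$. Hence $v \notin D$, and so $D_v = (D-\{u\})\cup\{v\}$ has cardinality $|D| = \gamma(G)$.

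The key step is to show that $D_v$ dominates $G$. Deleting $u$ from $D$ can only endanger the vertices whose domination relied solely on $u$, namely the members of $pn(D,u)$; by hypothesis this set is exactly $\{u\}$. Every other vertex of $G$ therefore remains dominated by $D-u \subseteq D_v$. The single at-risk vertex $u$ is in turn dominated in $D_v$, because $v \in N(u)$ gives $u \in N(v) \subseteq N[v]$. Thus $N[D_v] = V(G)$, and combined with $|D_v| = \gamma(G)$ we conclude that $D_v$ is a $\gamma$-set. Since $D_v = (D-\{u\})\cup\{v\}$ with $uv \in E(G)$, the definition of the $\gamma$-graph yields that $D$ and $D_v$ are adjacent, as required.

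I expect no serious obstacle here, as befits an observation. The only point requiring care is the domination check: one must recognize that the private-neighbour set $pn(D,u)$ pinpoints precisely the vertices whose domination could fail when $u$ is removed, and that adding the neighbour $v$ exactly covers the lone vertex left exposed.
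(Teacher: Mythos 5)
Your proof is correct; the paper states this as an observation without proof, and your argument is precisely the routine verification it leaves implicit: the hypothesis $pn(D,u)=\{u\}$ forces $v\notin D$ (so $|D_v|=\gamma(G)$) and guarantees every vertex other than $u$ remains dominated by $D-\{u\}$, while $v\in N(u)$ covers the lone exposed vertex $u$. Nothing further is needed.
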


\begin{observation}\label{spn2}
If $D_1$, $D_2=(D_1- \{u\}) \cup \{v\}$ and $D_3=(D_1- \{u\}) \cup \{w\}$ with $ v,w\in N(u)$, then $pn(D_1, u)=pn (D_2, v)= pn (D_2, w)=\{u\}$.
\end{observation}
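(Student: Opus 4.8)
The plan is to derive everything from Observation~\ref{basic}, which already pins the private neighbour set of a swapped-out vertex down to a two-element set. First I would apply Observation~\ref{basic} to the adjacent pair $D_1, D_2$: since $D_2 = (D_1 - \{u\}) \cup \{v\}$ with $u \sim v$, it gives $pn(D_1, u) = pn(D_2, v) \subseteq \{u, v\}$. Applying the same observation to the pair $D_1, D_3$ yields $pn(D_1, u) = pn(D_3, w) \subseteq \{u, w\}$.

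The key step is then to intersect these two containments. Because $D_2$ and $D_3$ are distinct $\gamma$-sets, the swapped-in vertices satisfy $v \neq w$, so that $\{u, v\} \cap \{u, w\} = \{u\}$, and hence $pn(D_1, u) \subseteq \{u\}$.

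To rule out the empty set, I would invoke the fact noted in the introduction that in any $\gamma$-set every vertex has a private neighbour; in particular $pn(D_1, u) \neq \emptyset$. Combining this with the previous containment forces $pn(D_1, u) = \{u\}$, and then the two equalities supplied by Observation~\ref{basic} propagate this to $pn(D_2, v) = pn(D_3, w) = \{u\}$ as well.

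I do not expect a genuine obstacle here, since the statement is essentially a direct corollary of Observation~\ref{basic}. The only point requiring a moment's care is verifying that $v \neq w$, so that the two two-element sets meet exactly in $\{u\}$; this holds precisely because $D_2$ and $D_3$ are assumed to be distinct $\gamma$-sets. (Note also that the final term in the statement should read $pn(D_3, w)$ rather than $pn(D_2, w)$, which is the equality the argument above establishes.)
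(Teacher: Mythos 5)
Your argument is correct: the paper states Observation~\ref{spn2} without proof, and your derivation from Observation~\ref{basic} (intersecting the two containments $pn(D_1,u)\subseteq\{u,v\}$ and $pn(D_1,u)\subseteq\{u,w\}$ using $v\neq w$, then invoking nonemptiness of private neighbour sets in a $\gamma$-set) is surely the intended reasoning. Your observation that the final term should read $pn(D_3,w)$ is also a correct catch of a typo in the statement.
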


\section{Computing $\gamma$-Graphs of Trees}
\label{sec:alg}

While Fricke et al.~\cite{FHHH11} developed a method to determine the $\gamma$-graphs of trees that contain a vertex not appearing in any $\gamma$-set (Theorem~\ref{thm:tree-wo}), there are trees for which every vertex appears in some $\gamma$-set.  We present a general algorithm to determine the $\gamma$-graph of a tree.  We first state the following results of Edwards, MacGillivray, and Nasserasr~\cite{E15} which will aid in verifying the algorithm.  If $D$ is a $\gamma$-set of a rooted tree $(T, c)$, then the \emph{height} of $D$, denoted $ht_T(D)$, is $\sum_{x \in D} d(x, c)$. Define $D$ to be  a \emph{higher} $\gamma$-set than $F$ if $ht_T(D) < ht_T(F)$ and $D$ to be a \emph{highest $\gamma$-set} if $ht_T(D) \le ht_T(F)$ for all $\gamma$-sets $F$ of $T$.

\begin{Lemma} \cite{E15} \label{prop:tree:high:pn}
A $\gamma$-set $D$ is a highest $\gamma$-set of a tree $T$ rooted at a vertex $c$ if and only if every $x \in D - \{c\}$ has a child $y \in pn(x, D)$.
\end{Lemma}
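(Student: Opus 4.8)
The plan is to prove both implications by relating the stated condition to the availability of a height-decreasing swap. Observe first that in a rooted tree adjacent vertices differ in distance to $c$ by exactly one, so a swap $D \to (D-\{u\})\cup\{v\}$ with $u\sim v$ lowers the height precisely when $v$ is the parent of $u$. By Observation~\ref{basic}, such a swap is available exactly when $pn(u,D)\subseteq\{u,p\}$, where $p$ denotes the parent of $u$; since $pn(u,D)\subseteq N[u]=\{u,p\}\cup\children(u)$ and is nonempty for every member of a $\gamma$-set, this is equivalent to $u$ having \emph{no} child in $pn(u,D)$. Thus the condition in the statement says exactly that $D$ admits no height-decreasing swap to a parent.

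For the forward direction I would argue the contrapositive. Suppose some $x\in D-\{c\}$ has no child in $pn(x,D)$, so that $pn(x,D)\subseteq\{x,p\}$ with $p$ the parent of $x$. Setting $D'=(D-\{x\})\cup\{p\}$, deleting $x$ can only remove domination from the vertices of $pn(x,D)\subseteq\{x,p\}$, each of which lies in $N[p]$, so $D'$ is again dominating; as $p\notin D$ (otherwise $D-\{x\}$ would dominate with fewer than $\gamma$ vertices), $D'$ is a $\gamma$-set with $ht_T(D')=ht_T(D)-1$, so $D$ is not highest.

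For the converse, assume every $x\in D-\{c\}$ has a child $y_x\in pn(x,D)$ and let $F$ be an arbitrary $\gamma$-set; I want to show $ht_T(D)\le ht_T(F)$. Since $y_x$ is an external private neighbour of $x$, we have $y_x\notin D$, no child of $y_x$ lies in $D$, and the only vertex of $D$ in $N[y_x]$ is $x$. Because $F$ dominates $y_x$, I can choose $\phi(x)\in F\cap N[y_x]$; as $N[y_x]=\{x,y_x\}\cup\children(y_x)$ and every vertex of $N[y_x]$ other than $x$ is at distance $d(x,c)+1$ from $c$, this choice satisfies $d(\phi(x),c)\ge d(x,c)$. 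The crux is to show $\phi$ is injective, and this is the step I expect to be the main obstacle. If $\phi(x)=\phi(x')$ for distinct $x,x'$, then the common value is a shared neighbour of $y_x$ and $y_{x'}$, and a short case analysis on the position of this vertex in the tree — using that the parent of each $y_x$ is its dominator $x\in D$ while no child of $y_x$ lies in $D$, together with the uniqueness of parents — rules out every possibility and forces $y_x=y_{x'}$, hence $x=x'$. With $\phi$ injective I would finish by a counting comparison: if $c\notin D$ then $\phi$ is a depth-nondecreasing bijection from $D$ onto $F$ and summing distances gives $ht_T(D)\le ht_T(F)$ at once, while if $c\in D$ its zero contribution is absorbed by the single vertex of $F$ outside the image of $\phi$.
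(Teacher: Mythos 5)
This lemma is one the paper imports from Edwards, MacGillivray, and Nasserasr \cite{E15} and states without proof, so there is no in-paper argument to compare against; I can only assess your proof on its own terms, and it is correct and self-contained. The forward direction is exactly the expected local argument: if $x\in D-\{c\}$ has no child in $pn(x,D)$, then $pn(x,D)\subseteq\{x,p\}$ for the parent $p$ of $x$, the set $(D-\{x\})\cup\{p\}$ is a $\gamma$-set of strictly smaller height, and the side remark that $p\notin D$ (else $pn(x,D)=\emptyset$, contradicting minimality) is needed and correctly supplied. The converse is the genuinely global part, and your injection $\phi$ handles it: the step you flag as the main obstacle does go through, because for distinct $x,x'\in D-\{c\}$ the closed neighbourhoods $N[y_x]$ and $N[y_{x'}]$ are in fact disjoint --- any common vertex would have to be, on each side, either the parent $x$ (respectively $x'$), the vertex $y_x$ itself, or a child of $y_x$, and each pairing is excluded by the facts that $x,x'\in D$ while $y_x,y_{x'}$ and all their children lie outside $D$, together with uniqueness of parents. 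With disjointness in hand, $\phi$ is injective, the depth inequality $d(\phi(x),c)\ge d(x,c)$ holds with equality only when $\phi(x)=x$, and the final counting (including absorbing the root's zero contribution when $c\in D$) is sound. This is essentially the standard private-neighbour packing argument one would expect for this result.
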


\begin{theorem} \cite{E15}\label{highest}
Let $T$ be a tree rooted at a vertex $c$.  Then $T$ has a unique highest $\gamma$-set.
\end{theorem}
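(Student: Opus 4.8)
The plan is to dispatch existence immediately and then concentrate all the work on uniqueness via an extremal (deepest-disagreement) argument driven by Lemma~\ref{prop:tree:high:pn}. Existence is free: a tree on finitely many vertices has finitely many $\gamma$-sets, so the height function $ht_T$ attains its minimum, and any minimizer is a highest $\gamma$-set. Thus the real content is that the minimizer is unique.

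For uniqueness, I would suppose for contradiction that $D$ and $F$ are two distinct highest $\gamma$-sets. Since $|D| = |F| = \gamma(T)$, the sets $D \setminus F$ and $F \setminus D$ are both nonempty, so the symmetric difference $D \triangle F$ is nonempty. Let $u$ be a vertex of $D \triangle F$ of maximum depth $d(u,c)$, and by symmetry assume $u \in D \setminus F$ (the roles of $D$ and $F$ are interchangeable, since Lemma~\ref{prop:tree:high:pn} applies to both). First I would rule out $u = c$: the only vertex of depth $0$ is $c$, so if $u$ were a deepest element of $D \triangle F$ then $D \triangle F \subseteq \{c\}$, which is impossible for two disjoint nonempty sets $D\setminus F$ and $F \setminus D$. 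Hence $u \in D - \{c\}$.

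Next I would apply Lemma~\ref{prop:tree:high:pn} to the highest $\gamma$-set $D$: the vertex $u$ has a child $y \in pn(u, D)$. Since $y$ is a child of $u$ it is strictly deeper than $u$, so maximality of $u$ forces $y \notin D \triangle F$. Because $y \in pn(u,D)$ and $y \neq u$, $y$ cannot lie in $D$ (a vertex of $D$ other than $u$ would dominate itself and so could not be a private neighbour of $u$); combined with $y \notin D \triangle F$ this yields $y \notin F$ as well. But $F$ dominates $y$, and the only neighbours of $y$ are its parent $u$ (which is not in $F$) and its children, so some child $z$ of $y$ lies in $F$. This $z$ is deeper still than $u$.

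The contradiction then comes from locating $z$ relative to $D$. Since $y \in pn(u,D) = N[u] - N[D - u]$ and $z \in N(y)$ with $z \neq u$, having $z \in D$ would place $y \in N[D - u]$, contradicting $y \in pn(u,D)$; hence $z \notin D$. Therefore $z \in F \setminus D \subseteq D \triangle F$ is a vertex strictly deeper than $u$, contradicting the choice of $u$ as a deepest element of the symmetric difference. This forces $D \triangle F = \emptyset$, that is, $D = F$. The main delicacy to get right is the private-neighbour bookkeeping---verifying that neither $y$ nor $z$ can belong to $D$---together with the boundary case $u = c$; once those are pinned down, the depth-maximality engine closes the argument.
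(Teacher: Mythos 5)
Your proof is correct. Note, however, that the paper does not prove this theorem at all: it is quoted from Edwards, MacGillivray, and Nasserasr~\cite{E15} as a known result, so there is no in-paper argument to compare against. Your derivation is a clean, self-contained deduction of uniqueness from Lemma~\ref{prop:tree:high:pn}: existence is handled by finiteness, and the deepest-disagreement argument is sound. The key steps all check out --- $u \neq c$ because the symmetric difference of two distinct equicardinal sets has at least two elements; the child $y \in pn(u,D)$ lies outside $D$ (else $y \in N[D-u]$) and hence outside $F$ by depth-maximality; $F$ must then dominate $y$ through one of its children $z$, which cannot lie in $D$ for the same private-neighbour reason; and $z \in F \setminus D$ is strictly deeper than $u$, contradicting maximality. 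The symmetry appeal (applying Lemma~\ref{prop:tree:high:pn} to whichever of $D$, $F$ contains the deepest disagreement) is legitimate since both are assumed highest. This is a perfectly good way to establish the result the paper takes on faith.
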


The algorithm DOMSET, developed by Cockayne, Goodman, and Hedetniemi~\cite{CGH75} finds a minimum dominating set of a tree in linear time.  We demonstrate that with a slight modification to the algorithm not affecting the run time, i.e. we exclude the root from being an eligible end vertex, the algorithm finds the highest minimum dominating set of a rooted tree.

\begin{algorithm} 
Let $T$ be a tree rooted at a vertex $c$.  We construct the highest $\gamma$-set $S$ of $(T, c)$ as follows:
\begin{algorithmic}[1]
\Procedure{HIGHEST}{$T, c$}
	\State Set $S \gets \emptyset$; $G \gets T$; label each vertex of $T$ as {\em bound}.
	\While{$G$ has an endvertex $v \neq c$ adjacent to a vertex $u$}
		\If{$v$ is {\em free}}
			\State $G \gets G - v$.
		\ElsIf{$v$ is {\em bound}}
			\State Relabel $u$ as {\em required};
			\State $G \gets G - v$.
		\ElsIf{$v$ is {\em required}}
			\State $S \gets S \cup \{v\}$;
			\State If $u$ is  {\em bound} then relabel $u$ as {\em free};
			\State $G \gets G - v$.
		\EndIf
	\EndWhile
	\If{$c$ is not {\em free}}
		\State $S \gets S \cup \{c\}$.
	\EndIf 
\EndProcedure
\end{algorithmic}
\end{algorithm}

\begin{theorem}
Let $T$ be a tree rooted at a vertex $c$.  The output, $S$, of the algorithm HIGHEST$(T, c)$ is the highest $\gamma$-set of $T$.
\end{theorem}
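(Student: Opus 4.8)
The plan is to establish three facts about $S$: that it is a dominating set of $T$, that it has minimum size, and that every $x\in S\setminus\{c\}$ has a child lying in $pn(x,S)$. The first two make $S$ a $\gamma$-set, and the third, by Lemma~\ref{prop:tree:high:pn}, makes it \emph{a} highest $\gamma$-set; Theorem~\ref{highest} then upgrades this to \emph{the} highest $\gamma$-set. This route also dispenses with the apparent ambiguity of ``the output'' when several endvertices are available in the loop: every run yields a $\gamma$-set satisfying the condition of Lemma~\ref{prop:tree:high:pn}, so by uniqueness all runs return the same set. Everything hinges on the meaning of the labels, so I would first record their dynamics. Each vertex starts \emph{bound}; the only relabellings are that deleting a bound endvertex marks its parent \emph{required} (whether the parent was \emph{bound} or \emph{free}), and that adding a required endvertex to $S$ marks its parent \emph{free} when that parent is \emph{bound}. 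Hence \emph{required} is absorbing, no vertex is ever relabelled \emph{bound}, and---because an endvertex is deleted only after all of its children---each vertex is processed strictly after its descendants.

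Next I would verify that $S$ dominates $T$ by maintaining the invariant that, at every stage, each deleted vertex is either dominated by the current $S$ or adjacent to a \emph{required} vertex of $G$, while each \emph{free} vertex of $G$ is dominated by $S$. The three branches preserve this: deleting a \emph{free} leaf removes an already dominated vertex; deleting a \emph{bound} leaf leaves it adjacent to its now \emph{required} parent; and deleting a \emph{required} leaf places it in $S$, thereby dominating the child that required it and freeing its parent. At termination $G=\{c\}$, and a deleted child of $c$ is undominated only if it had marked $c$ \emph{required}; the final step adjoins $c$ exactly when $c$ is not \emph{free}, which dominates any such child together with $c$ itself. Thus $S$ is dominating.

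The heart of the argument is the private-neighbour condition. Let $x\in S\setminus\{c\}$. Since $x$ is added only when processed as \emph{required}, and \emph{required} is reached solely by a parent being marked, the deletion of some \emph{bound} child $y$ of $x$ is what made $x$ \emph{required}; I claim $y\in pn(x,S)$. As $y$ was deleted in the \emph{bound} branch it was not added, so $y\notin S$, and in $T$ the neighbours of $y$ are exactly its parent $x$ and its children. No child $w$ of $y$ can lie in $S$: such a $w$ would be processed before $y$ and, on being added, would free $y$ if it were \emph{bound}, after which $y$ could never be \emph{bound} again---contradicting that $y$ was \emph{bound} when deleted. Hence the only neighbour of $y$ in $S$ is $x$, so $y\in N[x]\setminus N[S\setminus\{x\}]=pn(x,S)$ is a private child of $x$, as required.

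It remains to prove $|S|=\gamma(T)$. I would obtain this from the correctness of the linear-time algorithm DOMSET~\cite{CGH75}, of which HIGHEST is a variant altering only the treatment of the root: the end-of-loop step adjoins $c$ precisely in the cases any correct algorithm must (a root that is left undominated or is required by a child enters the set, an already dominated root does not), so the output size is unchanged. Alternatively, and self-containedly, minimality follows by induction on $|V(T)|$ using the exchange principle that some minimum dominating set contains the parent $u$ of a deepest leaf $v$---if a minimum set omits $u$ then it must contain $v$, and replacing $v$ by $u$ preserves size and domination---after which one deletes $u$ and its leaf-children and recurses on the smaller tree, in which the grandparent of $v$ is already dominated. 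I expect this last step to be the main obstacle, because the reduction introduces \emph{pre-dominated} vertices (equivalently, a \emph{free} vertex that the loop may later relabel \emph{required}), so the induction must be phrased for trees carrying such vertices; by contrast, once the no-return-to-\emph{bound} property is isolated, the domination and private-neighbour invariants are routine. Combining minimality with the private-neighbour condition and invoking Lemma~\ref{prop:tree:high:pn} and Theorem~\ref{highest} completes the proof.
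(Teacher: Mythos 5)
Your proof is correct and its core is the same as the paper's: both arguments rest on the observation that a vertex enters $S$ only after being marked \emph{required} by the deletion of a \emph{bound} child, and that such a child can have no neighbour in $S$ other than its parent, so it is a private child; combined with Lemma~\ref{prop:tree:high:pn} and the uniqueness from Theorem~\ref{highest} this gives the result. The only difference is one of thoroughness, not of route: the paper phrases the private-child step as a contradiction and simply inherits domination and minimality from the correctness of DOMSET, whereas you prove the step directly and also sketch the domination invariant and the minimality argument explicitly.
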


\begin{proof}
Let $D$ be the $\gamma$-set output by HIGHEST$(T, c)$. By Lemma~\ref{prop:tree:high:pn}, $D$ is a highest $\gamma$-set of $T$ if and only if every $x \in D - \{c\}$ has a child $y \in pn(x, D)$.   Suppose there exists an $x \in D - \{c\}$ such that $x$ has no child in $pn(x, D)$. Let $y$ be a child of $x$. Either $y \in S$ or some child of $y$ was in $S$. In the first case, $y$ was relabelled {\em required} when it was processed by HIGHEST$(T, c)$ and in the latter case $y$ was labelled {\em required} or {\em free}, when it was processed by HIGHEST$(T, c)$. In either case $y$ was not labelled {\em bound}.  But then, as $y$ was arbitrary, $u$ could not have been labelled as {\em required} before being processed by HIGHEST$(T, c)$, and so $u \notin S$, contradicting our assumption.  Hence, together with Theorem~\ref{highest}, $S$ is the highest $\gamma$-set of $T$.
\end{proof}

We now present the following algorithm for determining the $\gamma$-graph of a tree. For each vertex $v$ in $T$, let $i(v)$ be its index in a breadth-first traversal of $T$.  We note that any such algorithm must be exponential as the number of gamma sets of a tree is potentially exponential.  Recently, Rote~\cite{Rote} provided a family of trees, referred to as the \emph{star of snowflakes}, with $13k + 1$ vertices and at least $95^k$ minimum dominating sets, i.e.\ trees whose number of $\gamma$-sets is on the order of $1.4194^n$, establishing a lower bound on the maximum number of minimum dominating sets. 

\begin{algorithm} \label{alg:gt}
Let $T$ be a tree rooted at a vertex $c$ and let $S$ be the highest $\gamma$-set of $T$.  For each vertex $v$ in $T$, let $i(v)$ be its index in a breadth-first traversal of $T$.  We construct the $\gamma$-graph of $T$, $T(\gamma) = (V, E)$, as follows:
\begin{algorithmic}[1]
\Procedure{GAMMATREE}{$T, c, S$}
	\State $V \gets \{S\}$; $E \gets \emptyset$.
	\State $\parents(S) \gets \emptyset$; $\children(S) \gets \emptyset$; $i(S) \gets 0$.
	\ForAll{$D \in V$}
		\ForAll{$v \in D$}
			\If{$pn(v, D) = \{v\}$}
				\State $\Swap \gets \{x : x \in N(v), i(x) > i (D)\}$.
			\ElsIf{$pn(v, D) - \{v\} = \{x\} \And i(x) > i(D)$}
				\State $\Swap \gets \{x\}$.
			\Else
				\State $\Swap \gets \emptyset$.
			\EndIf
			\ForAll{$x \in \Swap$}
				\State $D' \gets D - \{v\} \cup \{x\}$. \label{new-gamma}
				\State $V \gets V \cup \{D'\}$; $E \gets E \cup \{(D, D')\}$.
				\State $\parents(D') \gets \{(D, x)\}$; $\children(D') \gets \emptyset$; $i(D') \gets i(x)$. \label{alg:gt:index}
				\State $\children(D) \gets \children(D) \cup \{(D', x)\}$.
				\ForAll{$(A, a) \in \parents(D)$}
					\ForAll{$(B, b) \in \children(A)$}
						\If{$b = x$}
							\State $E \gets E \cup \{(D', B)\}$. \label{alg:gt:cycle}
							\State $\children(B) \gets \children(B) \cup \{(D', a)\}$.
							\State $\parents(D') \gets \parents(D') \cup \{(B, a)\}$.
						\EndIf
					\EndFor
				\EndFor
			\EndFor
		\EndFor
	\EndFor
\EndProcedure
\end{algorithmic}
\end{algorithm}

The remainder of this section is devoted to verifying the correctness of the algorithm GAMMATREE($T, c, S$).  The \emph{index} of a $\gamma$-set $D$, denoted here $i(D)$, is defined on line~\ref{alg:gt:index} of Algorithm~\ref{alg:gt}.  We first prove the following lemma regarding the value of the index of a $\gamma$-set.

\begin{Lemma} \label{lem:tree:index}
Let $T$ be a tree rooted at a vertex $c$ and let $S$ be the highest $\gamma$-set of $T$.  For every $\gamma$-set $D \neq S$ found by GAMMATREE($T, c, S$), $$i(D)=\max_{v\in D-S} i(v).$$
\end{Lemma}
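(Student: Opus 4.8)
The plan is to prove Lemma~\ref{lem:tree:index} by induction on the order in which $\gamma$-sets are discovered by the algorithm GAMMATREE. The central observation is that the index $i(D)$ is assigned on line~\ref{alg:gt:index} exactly once, at the moment $D$ is first created, and it is set to $i(x)$ where $x$ is the vertex swapped \emph{in} to produce $D$ from its parent. So I need to show that this swapped-in vertex $x$ is precisely the vertex of largest breadth-first index among the vertices in the symmetric difference $D - S$. Because the breadth-first index is the measuring stick, and the algorithm only ever swaps a vertex $v$ for a neighbour $x$ with $i(x) > i(D)$ (by the \texttt{if} guards on lines checking $i(x) > i(D)$), the index is monotonically increasing along any path from $S$, which is the structural fact driving everything.

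**First I would** set up the induction carefully. The base case concerns the $\gamma$-sets $D$ adjacent to $S$ itself: such a $D$ is obtained as $D = (S - \{v\}) \cup \{x\}$ for some $v \in S$ and some $x \in N(v)$ with $i(x) > i(S) = 0$. Here $D - S = \{x\}$, so trivially $\max_{w \in D - S} i(w) = i(x) = i(D)$, matching the assignment. For the inductive step, suppose $D'$ is created from a parent $D$ by swapping out $v \in D$ and swapping in $x$, so $D' = (D - \{v\}) \cup \{x\}$ and $i(D') = i(x)$ by line~\ref{alg:gt:index}. By the guards, $i(x) > i(D)$. I may assume inductively that $i(D) = \max_{w \in D - S} i(w)$. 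The goal is to show $i(x) = \max_{w \in D' - S} i(w)$.

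**The key computation** is to relate $D' - S$ to $D - S$. Since $D' = (D - \{v\}) \cup \{x\}$, we have $D' - S \subseteq (D - S) \cup \{x\}$ with the only possible removal being $v$ (if $v \in D - S$) and the only addition being $x$ (assuming $x \notin S$, which I must verify, or handle by noting $i(x) > 0$ so $x \neq c$ and examining whether $x$ can lie in $S$). Using $i(x) > i(D) = \max_{w \in D - S} i(w)$, every element of $D - S$ has index strictly less than $i(x)$; hence removing $v$ cannot increase the maximum, and adding $x$ makes $x$ the unique new maximizer. Therefore $\max_{w \in D' - S} i(w) = i(x) = i(D')$, completing the step. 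I should also address the edges added on line~\ref{alg:gt:cycle} (the "cycle-closing" edges from the $\bbox$-structure of Theorem~\ref{thm:tree-wo}): when $D'$ acquires an additional parent $B$ there, the index is \emph{not} reassigned, so I must confirm consistency — namely that reaching $D'$ via $B$ would also swap in the same vertex $x$ of maximal index, which follows because $B$ and the original parent $D$ differ from $D'$ by swapping the same $x$ (the condition $b = x$ on the \texttt{if} guard guarantees this), so the formula $\max_{w \in D' - S} i(w)$ is parent-independent.

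**The main obstacle** I expect is verifying that $x \notin S$, i.e.\ that the newly swapped-in vertex genuinely enlarges the symmetric difference rather than accidentally belonging to the highest $\gamma$-set. If $x$ could lie in $S$, then $x$ would not appear in $D' - S$ and the stated maximum would be taken over a set not containing the very vertex whose index was assigned. I would resolve this by arguing that along a path from $S$, once the algorithm swaps a vertex into a $\gamma$-set it never lies in $S$: since $i(x) > i(D) \geq i(w)$ for all $w \in D - S$, and since $S$-vertices correspond (via the highest-$\gamma$-set characterization of Lemma~\ref{prop:tree:high:pn}) to the lowest-index occupants of their private-neighbour regions, a strictly-higher-index swap target $x$ cannot coincide with an element of $S$ that was already displaced. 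Making this precise — tying the monotone growth of the index to the fact that swaps always move "upward" away from the highest set $S$ — is the delicate part, and it is where the interplay between the breadth-first indexing and the root-distance height $ht_T$ from Lemma~\ref{prop:tree:high:pn} must be invoked.
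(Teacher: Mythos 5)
Your induction on discovery order has the same skeleton as the paper's minimal-counterexample argument, and your reduction is correct: writing $D' = (D - \{v\}) \cup \{x\}$ with $i(x) > i(D)$ and invoking the inductive hypothesis $i(D) = \max_{w \in D - S} i(w)$, every element of $D' - S$ other than $x$ lies in $D - S$ and hence has index strictly less than $i(x)$, so the lemma for $D'$ is exactly equivalent to the claim $x \notin S$. You correctly identify this as the crux, but you do not prove it. The sketch you offer (``$S$-vertices correspond to the lowest-index occupants of their private-neighbour regions, so a strictly-higher-index swap target cannot coincide with a displaced element of $S$'') is not a statement that Lemma~\ref{prop:tree:high:pn} provides, nor one from which the conclusion visibly follows; nothing in the algorithm's guards, which only compare $i(x)$ with $i(D)$, prevents a vertex of $S$ from having large breadth-first index, so index monotonicity alone cannot rule out $x \in S$.

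The missing argument, which is the entire content of the paper's proof, runs as follows. Suppose $x \in S$. By your own computation every vertex of $D'$ with index at least $i(x)$ lies in $S$, and since descendants have strictly larger breadth-first index than their ancestors, $x$ and every descendant of $x$ belonging to $D'$ lie in $S$. By Lemma~\ref{prop:tree:high:pn}, $x$ has a child $w \in pn(x, S)$; one then checks that $w \in pn(x, D')$ as well (any vertex of $D'$ in $N[w] - \{x\}$ is a descendant of $x$, hence in $S$, contradicting $w \in pn(x,S)$) and that $w \notin S$. Since $D = (D' - \{x\}) \cup \{v\}$ must dominate $w$ and $v \sim x$, triangle-freeness of $T$ forces $v = w$. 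But then $v \in D - S$ is a child of $x$, giving $i(D) \ge i(v) > i(x) > i(D)$, a contradiction. Without this step (or an equivalent one tying the swapped-out vertex to a child private neighbour of $x$), your proof does not close, because the whole difficulty of the lemma is concentrated in precisely the case you defer as ``the delicate part.''
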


\begin{proof}
Suppose $D'$ is the first $\gamma$-set of $T$ found by GAMMATREE($T, c, S$) such that $i(D')$ is not the largest index of the vertices of $D'$ not appearing in $S$. By definition of $i(D')$, there exists a $\gamma$-set $D$  and vertices $v, x$ such that $i(D') = i(x)$,  $D' = (D - \{v\}) \cup \{x\}$  and $i(x) > i(D)$. 

Suppose first that $y$ is a vertex in $D'-S$ such that $i(y) > i(D')$.   By choice of $D'$, we have that $i(D)=\max_{u\in D-S} i(u)$ and since $y \in D$, $i(D) \ge i(y)$.  Hence, $i(D') =i(x)> i(D)\ge i(y)> i(D')$, which is a contradiction. Therefore, we have $i(y) \le i(x)$ for all $y\in D'-S$ and if $x\in D'-S$ so we assume $x \in D' \cap S$. 

Hence any vertex in $D'$ with an index at least $i(x)$ must also be in $S$.   In particular, $x$, and every descendant of $x$ in $D'$ must also be in $S$.  By Lemma~\ref{prop:tree:high:pn}, $x$, and every descendant of $x$ in $D'$ has a child $S$-private neighbour which is also  a child $D'$-private neighbour. Let $w$ be a child $S$-private neighbour of $x$ which is also  a child $D'$-private neighbour of $x$. Clearly $w\notin S$. As $D$ is dominating and $D=(D' - \{x\}) \cup \{v\}$, it must be the case that $v=w$.  Hence, we obtain that $i(D) =\max_{u\in D-S} \ge i(w) > i(x)$, which is a contradiction as $i(x) > i(D)$.  The result follows.
\end{proof}

\begin{theorem} \label{thm:tree:vertex}
Let $T$ be a tree rooted at a vertex $c$ and let $S$ be the highest $\gamma$-set of $T$.  Every $\gamma$-set $D \neq S$ of $T$ is obtained by GAMMATREE($T, c, S$).
\end{theorem}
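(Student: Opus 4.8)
The plan is to prove completeness by induction on the quantity $m(D) := \max_{v \in D - S} i(v)$, which by Lemma~\ref{lem:tree:index} agrees with the index assigned on line~\ref{alg:gt:index} whenever $D$ is found, but which is defined intrinsically for every $\gamma$-set $D \neq S$. Since the breadth-first indices are distinct, $m$ is a well-defined, strictly ordered statistic, and $S$ itself lies in $V$ from initialization. So it suffices to show that every $\gamma$-set $D \neq S$ has a ``predecessor'' $\gamma$-set $D_0$ with $m(D_0) < m(D)$ such that, when GAMMATREE processes $D_0$, it performs exactly the swap producing $D$.

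The candidate predecessor is forced. If the algorithm creates $D$ on line~\ref{new-gamma}, it does so by removing some vertex and inserting the vertex $x$ of index $i(D) = m(D)$, the unique vertex of $D - S$ of maximum index; the removed vertex must be a neighbour of $x$ of smaller index, which in a rooted tree is exactly the parent $p$ of $x$. I would therefore set $D_0 = (D - \{x\}) \cup \{p\}$ and aim to show (i) $p \notin D$ and $D_0$ is a $\gamma$-set; (ii) $m(D_0) < m(D)$; and (iii) the swap $D_0 \to D$ is one GAMMATREE makes. Step (ii) is routine: every vertex of $D_0 - S$ either lies in $D - S - \{x\}$ (hence has index less than $i(x)$ by maximality of $x$) or equals $p$ (hence has index less than $i(x)$, as a parent precedes its child in breadth-first order), so $m(D_0) < i(x) = m(D)$, and induction (or $D_0 = S$) gives that $D_0$ is found. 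Step (iii) is then bookkeeping against the Observations: since $D_0$ and $D = (D_0 - \{p\}) \cup \{x\}$ are adjacent $\gamma$-sets with $p \sim x$, Observation~\ref{basic} gives $pn(p, D_0) = pn(x, D) \subseteq \{p, x\}$, and a short case check (using Observations~\ref{spn} and~\ref{spn2}) shows that in each admissible case $x$ lands in the set $\Swap$ computed for the pair $(D_0, p)$, while $i(x) > m(D_0) = i(D_0)$ validates the index guard; hence line~\ref{new-gamma} outputs exactly $D$.

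The crux, and the step I expect to be the main obstacle, is (i): that the downward swap to the parent is legal, i.e.\ that $D_0 = (D - \{x\}) \cup \{p\}$ is again a dominating set of the same size. Because $x$ and $p$ are adjacent in a tree, $N[x] \cap N[p] = \{x, p\}$, so $p$ dominates $pn(x, D)$ precisely when $pn(x, D) \subseteq \{x, p\}$; equivalently, $D_0$ is a $\gamma$-set exactly when $x$ has no child among its $D$-private neighbours (this also forces $p \notin D$, since any vertex of a nonempty $pn(x,D) \subseteq \{x,p\}$ is either $x$ with no $D$-neighbour or an external private neighbour, and in both cases $p \notin D$). Thus everything reduces to the claim that the maximum-index vertex $x$ of $D - S$ has no private-neighbour child. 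I would prove this by contradiction. Assuming $x$ has a private child $y$, maximality of $i(x)$ forces every $D$-vertex strictly below $x$ to have index exceeding $i(x)$, hence to lie in $S$; combining this with the fact that $y$ and the subtrees hanging off it must be dominated, I would compare $D$ with the highest set $S$ on the subtree rooted at $x$ and invoke the characterisation of the highest $\gamma$-set (Lemma~\ref{prop:tree:high:pn}) together with its uniqueness (Theorem~\ref{highest}) to conclude that $S$ would also be forced to select $x$, contradicting $x \in D - S$. Making this local exchange argument precise --- reconciling the private-neighbour bookkeeping of $D$ and $S$ on the subtree rooted at $x$ --- is where the real work lies.
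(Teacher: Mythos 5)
Your proposal follows essentially the same route as the paper: both identify the maximum-index vertex $x$ of $D - S$, use the highest-$\gamma$-set characterization (Lemma~\ref{prop:tree:high:pn}) to show $x$ has no child private neighbour, swap $x$ for its parent to obtain a predecessor $\gamma$-set that is handled earlier (you induct on $\max_{v \in D - S} i(v)$ where the paper takes a counterexample with lexicographically minimal index set, an immaterial difference), and then verify the swap is executed via $pn(v, D) \subseteq \{v, x\}$ together with the index guard. The one step you flag as still needing work --- that a child private neighbour of $x$ would force $x \in S$ --- is left at a comparable level of detail in the paper's own proof.
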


\begin{proof}
Assume there is a $\gamma$-set of $T$ not obtained by GAMMATREE($T, c, S)$.  For a $\gamma$-set $D = \{v_1, v_2, \ldots, v_{\gamma(T)}\}$ of $T$, let $I(D) = \{i(v_1), i(v_2), \ldots, i(v_{\gamma(T)})\}$, where $i(v_1) < i(v_2) < \cdots < i(v_{\gamma(T)})$.  Let $D'$ be the $\gamma$-set not obtained by GAMMATREE($T, c, S$) for which $I(D')$ is lexicographically smallest.  Let $x$ be so that $i(x)=i(D')$.  Then every descendant of $x$ in $D'$ has a higher index and hence is also in $S$.  It follows from Lemma~\ref{prop:tree:high:pn}, each descendant of $x$ in $D'$ has a child $D'$-private neighbour.  If $x$ also has a child $D'$-private neighbour, then every higher $\gamma$-set of $T$ must contain $x$, contradicting that $x \notin S$.  Hence, no child of $x$ is a $D'$-private neighbour.  Therefore, if $v$ is the parent of $x$, then the set $D = D' - \{x\} \cup \{v\}$ is a $\gamma$-set of $T$.  Since $I(D)$ is lexicographically smaller than $I(D')$, $D$ is found by GAMMATREE($T, c, S$).  Clearly each vertex in $D-S\subseteq (D'-S)\cup\{v\}$  has a smaller index than $x$, so $i(x) > i(D)$.  Further, as $D$ and $D'$ are both $\gamma$-sets of $T$, $pn(v, D) \subseteq \{v, x\}$, so for $v \in D$, $x \in \Swap$.  Hence, $D'$ is found by GAMMATREE($T, c, S$), which is a contradiction proving the theorem.
\end{proof}

Finally, we will show GAMMATREE($T, c, S$) obtains every edge of $T(\gamma)$.  We first demonstrate that the $\gamma$-sets are obtained by GAMMATREE$(T, c, S)$ in order of height.

\begin{Lemma} \label{lem:order-height}
The algorithm GAMMATREE$(T, c, S)$ can be implemented so that the $\gamma$-sets of $T$ are obtained in order of height.
\end{Lemma}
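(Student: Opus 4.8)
The plan is to show that every swap performed by GAMMATREE strictly increases height, and in fact by exactly one, so that a first-in-first-out implementation of the outer loop discovers the $\gamma$-sets in order of height. Concretely, I would first prove the key structural claim: whenever the algorithm forms $D' = D - \{v\} \cup \{x\}$ on line~\ref{new-gamma}, the vertex $x$ is a child of $v$ in $(T,c)$, so that $d(x,c) = d(v,c) + 1$ and $ht_T(D') = ht_T(D) + 1$. Since $x \in N(v)$, and the parent of $v$ has a strictly smaller breadth-first index than $v$ while every child has a strictly larger one, it suffices to prove $i(x) > i(v)$; as the swap condition already guarantees $i(x) > i(D)$, it is enough to establish $i(v) \le i(D)$.

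Second, I would prove $i(v) \le i(D)$ by contradiction, assuming instead that $x$ is the parent of $v$, so that $i(v) > i(x) > i(D)$. By Lemma~\ref{lem:tree:index}, $i(D) = \max_{u \in D - S} i(u)$, so $i(v) > i(D)$ forces $v \in D \cap S$ (and $v \neq c$, since $c$ has no parent). By Lemma~\ref{prop:tree:high:pn}, $v$ then has a child $y \in pn(v, S)$. The two swap branches, namely $pn(v, D) = \{v\}$ or $pn(v, D) - \{v\}$ equal to the single vertex $x$ (the parent), both force $y \notin pn(v, D)$; hence some $w \in D - \{v\}$ dominates $y$. In a tree the only dominators of $y$ other than its parent $v$ are $y$ itself and the children of $y$, so $i(w) \ge i(y) > i(v)$; moreover $y \in pn(v, S)$ means no vertex of $S - \{v\}$ dominates $y$, whence $w \notin S$, i.e.\ $w \in D - S$. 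This yields $i(D) \ge i(w) > i(v) > i(x) > i(D)$, a contradiction. Therefore $x$ is always a child of $v$, and each swap raises the height by exactly one.

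Finally, I would use this to justify the implementation. Since $S$ is the unique highest $\gamma$-set (Theorem~\ref{highest}), it has strictly smallest height among all $\gamma$-sets, and by Theorem~\ref{thm:tree:vertex} every other $\gamma$-set is reached from $S$ by swaps, each increasing height by one. If the outer loop over $V$ is implemented as a first-in-first-out queue seeded with $S$ (equivalently, a priority queue keyed on $ht_T$), then every $\gamma$-set of height $h$ is created only while processing a $\gamma$-set of height $h - 1$; an induction on $h$ shows all height-$(h-1)$ sets are dequeued before any height-$h$ set, so the height-$h$ sets are all obtained before any height-$(h+1)$ set. The edge-closing steps on line~\ref{alg:gt:cycle} add no new vertices and so do not disturb this order, and the result follows.

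I expect the main obstacle to be the case $v \in D \cap S$ with $v \neq c$: here $i(v)$ is not bounded by $i(D)$ through Lemma~\ref{lem:tree:index} directly, and ruling out a swap to the parent requires combining the private-neighbour characterization of $S$ in Lemma~\ref{prop:tree:high:pn} with a tree-domination argument that locates a witness $w \in D - S$ at or below a child of $v$ whose index exceeds $i(v)$.
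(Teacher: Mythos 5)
Your proof is correct and follows essentially the same route as the paper's: both arguments show that every swap performed on line~\ref{new-gamma} replaces a vertex by one of its children (using Lemma~\ref{lem:tree:index} together with the child-private-neighbour characterization of Lemma~\ref{prop:tree:high:pn}), so each step raises the height by exactly one and a first-in-first-out queue yields the claimed order. The only cosmetic difference is that you fold the paper's three cases ($D=S$; $D\neq S$ with $i(v)>i(D)$; $i(v)\le i(D)$) into a single contradiction; just note that when $D=S$ the contradiction is simply that $D-S=\emptyset$ contains no witness $w$, since Lemma~\ref{lem:tree:index} is stated only for $D\neq S$.
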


\begin{proof}
Let $D$ be a $\gamma$-set obtained by GAMMATREE($T, c, S)$.  As the algorithm runs, store the dominating sets of $T$ (these are the elements of $V$) in a queue, so that first one discovered by GAMMATREE($T, c, S)$ is the first one processed by GAMMATREE($T, c, S)$.  It suffices to  prove that every $\gamma$-set $D'$ found from $D$ (i.e.\ at line~\ref{new-gamma}) is such that $ht_T(D') = ht_T(D) + 1$.  We observe that there are adjacent vertices $v$ and $x$ such that $D' = (D - \{v\}) \cup \{x\}$.  If $D = S$, then every $z \in D - \{c\}$ has a child $y \in pn(z, D)$, so $x$ is a child of $v$ and $ht_T(D') = ht_T(D) + 1$ as desired.  Now suppose $D \neq S$.  If $i(v) > i(D)$, then by Lemma~\ref{lem:tree:index}, $v \in S$ and every descendant of $v$ which is in $D$ is also in $S$.  It follows that $v$ has a child $y \in pn(v, D)$, so $x$ must be a child of $v$, and $ht_T(D') = ht_T(D) + 1$ as desired.  Otherwise, $i(v) \le i(D)$ and $i(x) > i(D)$.  Then it is clear that $x$ is a child of $v$, and the result again follows.
\end{proof}

\begin{theorem} \label{thm:tree:edge}
Let $T$ be a tree rooted at a vertex $c$ and let $S$ be the highest $\gamma$-set of $T$.  The algorithm GAMMATREE$(T, c, S)$ can be implemented so that each edge in $T(\gamma)$ is obtained by GAMMATREE($T, c, S$).
\end{theorem}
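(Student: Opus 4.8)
The plan is to prove a slightly stronger statement by induction on the height of the larger‑height endpoint of an edge. Note first that if $D_1$ and $D_2=(D_1-\{u\})\cup\{v\}$ are adjacent in $T(\gamma)$, then $u\sim v$ forces $|ht_T(D_2)-ht_T(D_1)|=1$; I orient every edge so that $ht_T(D_2)=ht_T(D_1)+1$, in which case $v=D_2\setminus D_1$ is a child of $u=D_1\setminus D_2$. By Lemma~\ref{lem:order-height} the algorithm may process the $\gamma$-sets in nondecreasing order of height, so every set of height smaller than $ht_T(D_2)$ is fully processed before $D_2$ is. I would strengthen the hypothesis to $P(h)$: for every edge $\{D_1,D_2\}$ with $ht_T(D_2)=ht_T(D_1)+1=h$ and $v=D_2\setminus D_1$, the edge is output \emph{and} the records $(D_1,v)\in\parents(D_2)$ and $(D_2,v)\in\children(D_1)$ are in place before $D_2$ is processed. (To retain such records, line~\ref{alg:gt:index} is implemented to append to, rather than overwrite, $\parents(D')$ when $D'$ is rediscovered; this is exactly the latitude granted by ``can be implemented''.) The base case is $h=ht_T(S)+1$: since $S$ is the unique $\gamma$-set of minimum height (Theorem~\ref{highest}), the lower endpoint must be $S$, each such swap is downward with $i(v)>0=i(S)$, and the records are created when $S$ is processed first.

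For the inductive step, fix an edge $\{D_1,D_2\}$ with $ht_T(D_2)=h$ and compare $i(v)$ with $i(D_1)$; these are distinct since $v\notin D_1$. In the first case, $i(v)>i(D_1)$: by Observation~\ref{basic} we have $pn(u,D_1)\subseteq\{u,v\}$, so when $D_1$ is processed the test on vertex $u$ puts $v\in\Swap$, and the edge together with both records is produced directly. As $D_1$ is processed at height $h-1$, this happens before $D_2$.

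In the second case, $i(v)<i(D_1)$, I build the $4$-cycle that the cycle‑completion step (line~\ref{alg:gt:cycle}) is designed to close. Let $z^*$ be the vertex of largest index in $D_2-S$ and let $u^*$ be its parent; by Lemma~\ref{lem:tree:index} this is the vertex swapped in when $D_2$ is discovered, from the $\gamma$-set $D_3=(D_2-\{z^*\})\cup\{u^*\}$. One checks $z^*\neq v$, $i(z^*)>i(v)$, and—crucially—that $u^*\notin D_1\cup D_2$, so in particular $u^*\neq u$. This last point is where I expect the real work: it rules out the degenerate configuration in which $v$ and $z^*$ are siblings, and it follows from the discovery description in Theorem~\ref{thm:tree:vertex} together with the fact that every vertex of a $\gamma$-set has a private neighbour (if the parent $u^*$ of $z^*$ lay in $D_1$, then $z^*$, having no child private neighbour, would satisfy $pn(z^*,D_1)=\emptyset$, which is impossible). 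Granting this, set $A=(D_2-\{v,z^*\})\cup\{u,u^*\}$. Since $u,u^*\notin D_2$ and $u\neq u^*$, we have $|A|=\gamma(T)$, and $A$ is dominating: the only vertices that $D_2-\{v,z^*\}$ can leave undominated are those dominated in $D_2$ solely by $v$ and/or $z^*$, and a short tree and private‑neighbour argument shows each such vertex lies in $N[u]\cup N[u^*]$. Hence $A$ is a $\gamma$-set, and $A,D_1,D_3,D_2$ form a $4$-cycle with $D_1=(A-\{u^*\})\cup\{z^*\}$ and $D_3=(A-\{u\})\cup\{v\}$.

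Finally I close the cycle. Using $i(z^*)=\max_{w\in D_2-S}i(w)$, one verifies the index inequalities $i(z^*)>i(A)$ and $i(z^*)>i(D_3)$, so both swaps into $z^*$ are direct: when $A$ is processed (height $h-2$) the swap in $z^*$ records $(D_1,z^*)\in\children(A)$, and when $D_3$ is processed (height $h-1$) the swap in $z^*$ creates $D_2$. Meanwhile the edge $\{A,D_3\}$ has larger‑height endpoint $D_3$ at height $h-1$, so by the induction hypothesis $P(h-1)$ the record $(A,v)\in\parents(D_3)$ is present before $D_3$ is processed. Thus, at the moment $D_3$ is processed and $D_2$ is formed by swapping in $z^*=x$, the cycle‑completion loop encounters $(A,v)\in\parents(D_3)$ and the child $(D_1,z^*)\in\children(A)$ with $b=z^*=x$, so it adds the edge $(D_2,D_1)$ and records $(D_1,v)\in\parents(D_2)$ and $(D_2,v)\in\children(D_1)$. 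Because all of this occurs while processing $D_3$ at height $h-1<h$, the conclusion of $P(h)$ follows. The two main obstacles are therefore the structural facts of the second case—that $A$ is a genuine $\gamma$-set (equivalently, that the $4$-cycle exists and the sibling degeneracy cannot arise) and that the index bookkeeping forces every record to be created strictly before it is consulted.
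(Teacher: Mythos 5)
Your proposal is correct and follows essentially the same route as the paper: both arguments locate the $4$-cycle formed by the missing edge's endpoints, the set from which the higher endpoint is discovered, and a fourth $\gamma$-set one level above, and then show that the cycle-completion step (line~\ref{alg:gt:cycle}) closes the missing edge; your induction on height with explicit parent/child records is just the contrapositive packaging of the paper's minimal-counterexample argument. The only substantive difference is that you spell out details the paper asserts without proof (that the fourth set is a genuine $\gamma$-set, that the sibling degeneracy $u^*=u$ is impossible, and that the bookkeeping records exist before they are consulted), and all of these check out.
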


\begin{proof}
Implement algorithm GAMMATREE$(T, c, S)$ as required in Lemma~\ref{lem:order-height} and suppose not all edges of $\gamma (T)$ are obtained.  Let $D_1$ be the first $\gamma$-set found by GAMMATREE($T, c, S$) such that there exists a $\gamma$-set $D_2$, found before $D_1$, where $D_2 = (D_1 - \{z\}) \cup \{y\}$, $yz \in E(T)$, but edge $D_1 D_2$ is not obtained by GAMMATREE($T, c, S$).  By Lemma~\ref{lem:order-height}, $ht(D_2) = ht(D_1)-1$.  As $D_1$ and $D_2$ are both $\gamma$-sets of $T$, we have $pn(z, D_1)=pn(y, D_2) \subseteq \{y, z\}$.  Suppose $i(x) = i(D_1)$.  Then  by Lemma~\ref{lem:order-height} $D_1$ was found while processing a set $D_3 = (D_1 - \{x\}) \cup \{w\}$, where $w$ is the parent of $x$ in $T$.  Clearly, neither $D_2$ nor $D_3$ is $S$.  Hence, there exists a $\gamma$-set $D_4$ such that $D_4 = (D_3 - \{z\}) \cup \{y\} = (D_2 - \{x\}) \cup \{w\}$.  Moreover, since $w$ is the parent of $x$, we have 
$$ ht_T(D_4) + 1 = ht_T(D_3) = ht_T(D_2) = ht_T(D_1) - 1, $$
so $D_4$ is found before $D_2$ and $D_3$ by Lemma~\ref{lem:order-height}.  Hence, $D_2 D_4$ and $D_3 D_4$ were both obtained by GAMMATREE($T, c, S$).  In particular, $(D_4, z) \in \parents(D_3)$ and $(D_2, x) \in \children(D_4)$.  Thus, we obtain the edge $D_1 D_2$ when processing $D_3$, which is the contradiction proving the theorem.
\end{proof}

The correctness of the algorithm immediately follows.

\begin{Corollary}
Let $T$ be a tree rooted at a vertex $c$ and let $S$ be the highest $\gamma$-set of $T$.  The algorithm GAMMATREE($T, c, S$) can be implemented to produce $T(\gamma)$.
\end{Corollary}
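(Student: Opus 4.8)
The plan is to read the statement as two inclusions for the output graph $H = (V, E)$ produced by the height-ordered implementation of GAMMATREE$(T, c, S)$ guaranteed by Lemma~\ref{lem:order-height}: namely $H \supseteq T(\gamma)$ (\emph{completeness}) and $H \subseteq T(\gamma)$ (\emph{soundness}). Completeness is already in hand, which is why the paper can say correctness follows immediately. Indeed, the starting set $S$ is a $\gamma$-set (it is the highest $\gamma$-set, by the preceding theorem on HIGHEST), Theorem~\ref{thm:tree:vertex} shows every other $\gamma$-set is placed in $V$, and Theorem~\ref{thm:tree:edge} shows every edge of $T(\gamma)$ is placed in $E$. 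The remaining work is therefore to certify that nothing spurious is produced.

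For soundness of the vertex set, I would argue that every set added at line~\ref{new-gamma} is genuinely a $\gamma$-set. Such a set has the form $D' = (D - \{v\}) \cup \{x\}$ with $D \in V$ and $x \in \Swap$, and membership in $\Swap$ holds only under one of the two private-neighbour conditions tested by the algorithm: either $pn(v, D) = \{v\}$ with $x \in N(v)$, or $pn(v, D) - \{v\} = \{x\}$ with $x \in N(v)$. In the first case Observation~\ref{spn} gives directly that $D$ is adjacent in $T(\gamma)$ to $D'$, so $D'$ is a $\gamma$-set; in the second case $x$ is the unique external private neighbour of $v$, so $(D - \{v\}) \cup \{x\}$ remains a dominating set of the same cardinality, hence a $\gamma$-set, and $D D'$ is an edge of $T(\gamma)$. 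This simultaneously certifies that every edge recorded alongside a swap, namely $(D, D')$, is a true edge of $T(\gamma)$.

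The one genuinely new verification, and what I expect to be the crux, concerns the extra edges inserted at line~\ref{alg:gt:cycle}, which close $4$-cycles rather than recording a direct swap. Suppose $(D', B)$ is added while processing $D$, where $D$ has recorded parent $(A, a)$ and $D' = (D - \{v\}) \cup \{x\}$, and $B = (A - \{q\}) \cup \{x\}$ is the child of $A$ obtained by swapping in the \emph{same} vertex $x$. By Lemma~\ref{lem:order-height}, in the height-ordered implementation every algorithmic swap replaces a set-vertex by one of its children in $T$; hence $x$ is a child of $v$ and also a child of $q$. Since the parent of $x$ in the tree is unique, $v = q$. Writing $D = (A - \{p\}) \cup \{a\}$ with $p$ the parent of $a$ (and $p \neq q$, as $p \notin D$ while $q = v \in D$), I then obtain $D' = (A - \{p, q\}) \cup \{a, x\} = (B - \{p\}) \cup \{a\}$, a swap of the adjacent pair $p, a$. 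Thus $D'$ and $B$ are both $\gamma$-sets differing by an adjacent swap, so $D'B \in E(T(\gamma))$. The delicate point is the forcing $v = q$, which is exactly where the acyclicity of $T$ (uniqueness of a parent) and Lemma~\ref{lem:order-height} do the essential work, ruling out any other pairing that would fail to leave $p$ and $a$ adjacent. With both inclusions established, $H = T(\gamma)$ and the corollary follows.
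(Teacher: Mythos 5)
Your proposal is correct, and it contains the paper's entire proof inside its first paragraph: the paper's argument for this corollary is exactly the two citations you give, Theorem~\ref{thm:tree:vertex} for the vertices and Theorem~\ref{thm:tree:edge} for the edges, and nothing more. Where you genuinely diverge is in insisting on, and then proving, the soundness inclusion $H \subseteq T(\gamma)$, which the paper leaves entirely implicit --- strictly speaking the two cited theorems only establish that the output \emph{contains} $T(\gamma)$, not that it equals it. Your verification that every set created at line~\ref{new-gamma} is a $\gamma$-set (via Observation~\ref{spn} in the self-private-neighbour case, and via the unique-external-private-neighbour count in the other) and that every edge closed at line~\ref{alg:gt:cycle} is a real edge of $T(\gamma)$ (forcing $v = q$ from the uniqueness of the tree-parent of $x$, using Lemma~\ref{lem:order-height}) is sound and is actually needed later in the paper, e.g.\ in the proof of Lemma~\ref{4cycle or cut edge}, which relies on line~\ref{alg:gt:cycle} producing only genuine 4-cycles. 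The one place you could tighten the write-up: the records in $\parents(\cdot)$ and $\children(\cdot)$ are augmented both at the direct-swap step and at the 4-cycle-closure step, so the property you invoke --- that $(A,a)\in\parents(D)$ means $D$ arises from $A$ by swapping $a$ in for its tree-parent, and that $DA$ is already a certified edge --- should be stated as an invariant and checked to be preserved by both kinds of insertion; the check is immediate, but as written you only justify it for the direct-swap records. In short, your route buys a complete correctness proof where the paper offers only the completeness half.
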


\begin{proof}
By Theorem~\ref{thm:tree:vertex}, we obtain every vertex of $T(\gamma)$, and by Theorem~\ref{thm:tree:edge}, we obtain every edge of $T(\gamma)$.  The result follows.
\end{proof}


\section{$\gamma$-Trees of Trees}
\label{sec:gamma-tree}

In this section, we characterize the trees which are $\gamma$-graphs of trees. We first consider properties of the $\gamma$-sets corresponding to the leaves of a $\gamma$-graph.

\begin{Lemma} \label{lem:gt:pn}
Let $T$ be a tree with at least three vertices and let $T(\gamma)$ be the $\gamma$-graph of $T$.  If $D$ is a leaf in $T(\gamma)$, then exactly one vertex  $v\in D$ has fewer than two $D$-external private neighbours. Furthermore either $v$ has exactly one $D$-external private neighbour or $v$ is both a leaf in $T$ and a $D$-self private neighbour.
\end{Lemma}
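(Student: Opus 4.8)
The plan is to compute $\deg_{T(\gamma)}(D)$ by counting, for each $u \in D$, the number $s(u)$ of $\gamma$-sets adjacent to $D$ via a swap at $u$, and then to use the hypothesis that $D$ is a leaf, i.e.\ $\deg_{T(\gamma)}(D) = 1$. A swap at $u$ replaces $u$ by an adjacent vertex, and distinct vertices of $D$ produce distinct neighbours of $D$ (the symmetric difference of $D$ with such a neighbour is a pair $\{u, v\}$ with $u \in D$ and $v \notin D$, which determines $u$), so $\deg_{T(\gamma)}(D) = \sum_{u \in D} s(u)$.

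Next I would compute $s(u)$ from the $D$-private neighbours of $u$. By Observation~\ref{basic}, if $(D - \{u\}) \cup \{v\}$ is a $\gamma$-set with $uv \in E(T)$, then $pn(u, D) \subseteq \{u, v\}$, so any $D$-external private neighbour of $u$ must be the swapped-in vertex $v$. Three cases result. If $u$ has at least two $D$-external private neighbours, no single $v$ can equal all of them, so $s(u) = 0$. If $u$ has exactly one $D$-external private neighbour $p$, then $v = p$ is forced; as $p \in N(u)$ and the only vertices undominated by $D - \{u\}$ lie in $pn(u, D) \subseteq \{u, p\}$ and are dominated by $p$, the swap $u \to p$ is valid, giving $s(u) = 1$. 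If $u$ has no $D$-external private neighbour, then $u$ is a $D$-self private neighbour with $pn(u, D) = \{u\}$, and Observation~\ref{spn} supplies a valid swap to each neighbour of $u$, so $s(u) = \deg_T(u)$.

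Because $T$ is connected with at least three vertices, every vertex has degree at least one, so $s(u) \ge 1$ precisely when $u$ has fewer than two $D$-external private neighbours and $s(u) = 0$ otherwise. The equation $\sum_{u \in D} s(u) = 1$ then forces exactly one vertex $v \in D$ to have $s(v) = 1$ while every other vertex of $D$ has at least two external private neighbours, which is the first assertion. For this $v$ the case analysis leaves two options: either $v$ has exactly one $D$-external private neighbour, or $v$ has none, whence $s(v) = \deg_T(v) = 1$ makes $v$ a leaf of $T$ and, having no external private neighbour, a $D$-self private neighbour. This gives the remaining claim.

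The step needing the most care is the reduction in the second paragraph: the containment $pn(u, D) \subseteq \{u, v\}$ from Observation~\ref{basic} (which is where the tree structure enters, since an edge $uv$ of a tree has no common neighbour) is exactly what collapses the count $s(u)$ to a function of the number of $D$-external private neighbours of $u$; everything else is a short degree sum.
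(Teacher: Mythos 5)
Your proof is correct and follows essentially the same route as the paper: both arguments count the neighbours of $D$ in $T(\gamma)$ contributed by each $u\in D$ according to whether $u$ has zero, one, or at least two $D$-external private neighbours, and then invoke $\deg_{T(\gamma)}(D)=1$. Your write-up merely makes the degree-sum bookkeeping and the use of Observation~\ref{basic} more explicit than the paper does.
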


\begin{proof}
Note that if every vertex of dominating set $D$ in a tree has at least two $D$-external private neighbours, $D$ has no neighbours  in $T(\gamma)$.  If a vertex $x$ in $T$ has exactly one external private neighbour $y$ in $D$, then $(D - \{x\}) \cup \{y\}$ is a $\gamma$-set of $T$ adjacent to $D$.  If a vertex $z$ in $T$ has no external private neighbours in $D$, then $z$ must be a self private neighbour in $D$ and for every $w \in N(z)$, $(D - \{z\})\cup \{w\}$ is a $\gamma$-set of $T$ adjacent to $D$.  As $D$ is adjacent to exactly one $\gamma$-set, the result follows.
\end{proof}

Next, we prove a fundamental result on the number of private neighbours of a vertex in adjacent $\gamma$-sets of a tree.

\begin{Lemma} \label{lem:gt:change}
Let $D$ and $F$ be $\gamma$-sets of a tree $T$ which are adjacent in $\gamma (T)$ and let $x \in D \cap F$.  Then $|pn(x, D) \setminus pn(x, F) | \le 1$ and $|pn(x, F) \setminus  pn(x, D) | \le 1$.
\end{Lemma}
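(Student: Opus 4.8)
The plan is to analyze how the private neighbour set of a fixed vertex $x \in D \cap F$ changes when we pass from $D$ to $F$. Since $D$ and $F$ are adjacent in $T(\gamma)$, we may write $F = (D - \{u\}) \cup \{v\}$ with $uv \in E(T)$, and since $x$ lies in both sets we have $x \neq u$ and $x \neq v$. The sets $D - x$ and $F - x$ therefore agree except that $D - x$ contains $u$ in place of $v$; this single difference is what I would exploit.

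First I would locate $pn(x, D) \setminus pn(x, F)$ inside a small set. Take $w \in pn(x, D) \setminus pn(x, F)$. Then $w \in N[x]$ and $w \notin N[D - x]$, while $w \notin pn(x, F)$ forces $w \in N[F - x]$. Because $N[F - x]$ and $N[D - x]$ differ only through the closed neighbourhoods of $u$ and $v$, and $w$ avoids $N[(D - x) \setminus \{u\}]$, the only way $w$ can enter $N[F - x]$ is via $v$; hence $w \in N[x] \cap N[v]$. Thus $pn(x, D) \setminus pn(x, F) \subseteq N[x] \cap N[v]$.

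It then remains to bound $|N[x] \cap N[v]|$, and this is where the tree structure and the main obstacle lie. If $x \not\sim v$, then $x$ and $v$ have at most one common neighbour in $T$ (two would produce a $4$-cycle), and neither $x$ nor $v$ lies in $N[x] \cap N[v]$, so the intersection has size at most one and we are done. The delicate case is $x \sim v$: here $N[x] \cap N[v] = \{x, v\}$ (a further common neighbour would create a triangle), which a priori allows two private neighbours. I would resolve this by observing that $u \in D - x$ and $u \sim v$, so $v \in N[u] \subseteq N[D - x]$, whence $v \notin pn(x, D)$; consequently $v \notin pn(x, D) \setminus pn(x, F)$ and only $x$ can survive. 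In every case $|pn(x, D) \setminus pn(x, F)| \le 1$.

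Finally, the inequality $|pn(x, F) \setminus pn(x, D)| \le 1$ follows by the symmetric argument, interchanging the roles of $D$ and $F$ (equivalently of $u$ and $v$): the same reasoning places this difference set inside $N[x] \cap N[u]$ and uses $u \in N[v] \subseteq N[F - x]$ to discard $u$. I expect the $x \sim v$ case, and specifically the observation that $v$ is already dominated by $u$ in $D$, to be the crux of the argument.
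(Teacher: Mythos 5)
Your proof is correct and follows essentially the same approach as the paper: both arguments reduce the difference set $pn(x,D)\setminus pn(x,F)$ to vertices lying in $N[x]\cap N[v]$ (for the swapped-in vertex $v$) and then invoke the absence of $3$- and $4$-cycles in $T$ to bound that intersection. The paper phrases this as a contradiction via an explicit $3$-cycle or $4$-cycle, while you argue the containment directly, but the combinatorial content is identical.
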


\begin{proof}
Suppose $F = (D - \{y\}) \cup \{z\}$ for adjacent vertices $y$ and $z$ in $T$.  Let $x \in D \cap F$. Suppose the result is false and assume without loss of generality that $| pn(x, D) \setminus   pn(x, F) | \ge 2$. 
If $x$ is a $D$-self private neighbour, but not an $F$-self private neighbour, then $z\sim x$ and $z$ is adjacent to a $D$-external private neighbour of $x$, say $w$.  Then $x w z$ is a 3-cycle in $T$, which contradicts that $T$ is a tree.  Otherwise, $x$ has at least two $D$-external private neighbours $v, w$ that are adjacent to  $z$, so $x v z w$ is a 4-cycle in $T$, which contradicts that $T$ is a tree.  The result follows.
\end{proof}

We now consider the $\gamma$-sets of leaves adjacent to the same stem.

\begin{Lemma} \label{lem:gt:stem}
Let $T$ be a tree and let $T(\gamma)$ be the $\gamma$-graph of $T$.  If $S$ is a stem in $T(\gamma)$ with degree at least three and $L_1, L_2, \ldots, L_k$ are the leaves of $S$, then there exists  vertices $x\in S$ and $y_1, y_2, \ldots, y_k\in V(T)-S$ so that $x\sim y_i$ and $L_i = (S - \{x\}) \cup \{y_i\}$ for $i=1, 2, \ldots k$.
\end{Lemma}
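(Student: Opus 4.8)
The plan is to argue by contradiction. Suppose two of the leaves are obtained from $S$ by swapping out \emph{different} vertices, say $L_1 = (S - \{x\}) \cup \{y_1\}$ and $L_2 = (S - \{x'\}) \cup \{y_2\}$ with $x \sim y_1$, $x' \sim y_2$, and $x \neq x'$. First I would use Observation~\ref{basic} to record that $pn(y_1, L_1) = pn(x, S) \subseteq \{x, y_1\}$ and $pn(y_2, L_2) = pn(x', S) \subseteq \{x', y_2\}$, so $y_1$ (respectively $y_2$) has at most one external private neighbour in $L_1$ (respectively $L_2$). Since $L_1$ and $L_2$ are leaves, Lemma~\ref{lem:gt:pn} then pins $y_1$ and $y_2$ as the \emph{unique} vertices of their sets having fewer than two external private neighbours. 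In particular $x' \in L_1$ with $x' \neq y_1$, so $x'$ has at least two $L_1$-external private neighbours, and symmetrically $x$ has at least two $L_2$-external private neighbours.

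Next I would quantify this jump. Because $L_2$ is a $\gamma$-set obtained by swapping out $x'$, vertex $x'$ has at most one external private neighbour in $S$; as $x' \in S \cap L_1$, Lemma~\ref{lem:gt:change} permits $x'$ to gain at most one private neighbour in passing from $S$ to $L_1$. Hence $x'$ has \emph{exactly} two $L_1$-external private neighbours: its old one $y_2$ (which it must therefore retain, so $y_2 \in pn(x', S)$) and a single newly gained vertex $w$. A short tree argument identifies $w$ as a common neighbour of $x$ and $x'$: being newly private to $x'$ in $L_1$ forces $w \in N[x]$ and $w \notin N[y_1]$, so $w \neq x$ (since $x \in N[y_1]$), and $w \neq x'$; as a triangle is impossible, $x \not\sim x'$ and $d_T(x, x') = 2$ with $w$ the midpoint. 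The symmetric computation on $L_2$ shows $x$ retains $y_1$ (so $y_1 \in pn(x, S)$) and gains the \emph{same} vertex $w$ (the unique common neighbour of $x$ and $x'$ in the tree) as its second $L_2$-external private neighbour.

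Finally I would invoke the hypothesis $\deg(S) \ge 3$ in $T(\gamma)$. Since $y_1 \in pn(x, S)$ is external, the only admissible swap at $x$ is $x \mapsto y_1$, producing precisely $L_1$; likewise $x'$ yields only $L_2$. Thus the third neighbour of $S$ must arise from swapping out some $z \in S$ with $z \neq x, x'$. As $z$ lies in both $L_1$ and $L_2$ and differs from $y_1, y_2$, the leaf property again forces $z$ to gain an external private neighbour in each of $L_1$ and $L_2$, placing $z$ at distance two from both $x$ and $x'$; combined with $d_T(x, x') = 2$, in a tree this is possible only if $z \sim w$. But then $w \in N[z]$ with $z \in L_1 - \{x'\}$, so $w$ is dominated in $L_1$ by a vertex other than $x'$ and cannot be an $L_1$-private neighbour of $x'$, contradicting that $w$ is a second $L_1$-external private neighbour of $x'$. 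This contradiction shows every leaf of $S$ is obtained by swapping out a common vertex $x$, which is the assertion.

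I expect the main obstacle to be locating where the degree hypothesis is truly indispensable. The distance-two configuration $y_1 - x - w - x' - y_2$ is, by itself, entirely consistent: the path $P_5$ realizes it with $S$ of degree two and its two leaves swapped at different vertices, so no contradiction can come from $L_1$ and $L_2$ alone. The crux will be recognizing that a third independent swap forces the third removed vertex $z$ to be adjacent to the shared common neighbour $w$, which is exactly what destroys the delicate privacy of $w$ that each of $L_1$ and $L_2$ requires in order to be a leaf.
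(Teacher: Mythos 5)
Your proof is correct and follows essentially the same strategy as the paper's: assume two leaves swap out distinct vertices $x \neq x'$, use Lemmas~\ref{lem:gt:pn} and~\ref{lem:gt:change} to force a common neighbour $w$ of $x$ and $x'$ that carries the extra private neighbour each leaf requires, and then show that the third neighbour of $S$ guaranteed by $\deg(S) \ge 3$ must swap out a vertex $z$ adjacent to $w$, destroying that privacy. The only real difference is how the common neighbour is located --- the paper obtains it as a vertex left undominated by the diagonal set $(S - \{x, x'\}) \cup \{y_1, y_2\}$, which cannot be a $\gamma$-set since $L_1$ and $L_2$ are leaves, whereas you extract it directly from the private-neighbour count --- and your version cleanly sidesteps the paper's separate case analysis on whether three or more distinct vertices are swapped out.
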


\begin{proof}
If $k = 1$, the statement is trivial, so assume $k \ge 2$.  As $S$ is adjacent to $L_i$ in $T(\gamma)$, for each $i$ there exists vertices $x_i$ and $y_i$ so that $L_i = (S - \{x_i\}) \cup \{y_i\}$ and $x_i\sim y_i$. Let $X = \{x_i: i = 1, 2, \ldots, k\}\subseteq S$.  Suppose $|X| \ge 2$.  By Lemma~\ref{lem:gt:pn}, $y_i$ is the only vertex in $L_i$ with fewer than two $L_i$-external private neighbours.  In particular, if $x_i\ne x_j$, then $x_i$ has at least two $L_j$-external private neighbours. It now follows from Lemma~\ref{lem:gt:change}, that each $x_i$ has an $S$-external private neighbour.  

If $x_i \neq x_j$, then $S - \{x_i, x_j\} \cup \{y_i, y_j\}$ is not a $\gamma$-set of $T$ as $L_i$ and $L_j$ are leaves of $T(\gamma)$.  Hence there exists a vertex $z_{ij}$ which is not dominated by $S - \{x_i, x_j\} \cup \{y_i, y_j\}$.  It must be the case that both $x_i$ and $x_j$ are neighbours of $z_{ij}$.  If $|X| \ge 3$, then each such $z_{ij}$ must be identical or $T$ is not a tree, but then $S - \{x_i, x_j\} \cup \{y_i, y_j\}$ is dominating, a contradiction.  Hence we assume $|X| = 2$, say $X=\{x_1, x_2\}$. Note that for $i=1,2$, $pn(S, x_i)=pn(L_i, y_i)\subseteq \{x_i, y_i\}$. As   each $x_i$ has an $S$-external private neighbour, $y_i$ is an $S$-private neighbour of $x_i$.  Hence for $i=1,2$, $x_i$ may only swap with $y_i$.  Furthermore as $y_i$ is the only vertex in $L_i$ with fewer than two $L_i$-external private neighbours, $x_1$ and $x_2$ have only one common neighbour, and $z_{12}$ and $T$ has no cycles, it follows that $x_1$ and $x_2$ are the only vertices in $S$ with fewer than two $S$-external private neighbours.  Thus, $S$, $L_i$, and $L_j$ are the only $\gamma$-sets of $T$, contradicting that $S$ has degree at least three.  Hence, $|X| = 1$, which completes the proof.
\end{proof}

Our next step is to show the graph $H$, pictured in Figure~\ref{fig:H}, is not a $\gamma$-graph of a tree.

\begin{figure}[htbp]
\begin{center}
\begin{tikzpicture}
\GraphInit[vstyle=Simple]
\grEmptyPath[prefix=a, RA = 1, rotation=90]{3}
\begin{scope}[shift={(1,0)}]
\grEmptyPath[prefix=b, RA = 1, rotation=90]{3}
\end{scope}
\EdgeInGraphSeq{a}{0}{1}
\EdgeInGraphSeq{b}{0}{1}
\Edge(a1)(b1)
\node at (-.5,1) {$S$};
\node at (-.5,0) {$L_1$};
\node at (-.5,2) {$L_2$};
\node at (1.5,1) {$R$};
\node at (1.5,0) {$M_1$};
\node at (1.5,2) {$M_2$};

\end{tikzpicture}
\caption{\label{fig:H} The graph $H$}
\end{center}
\end{figure}
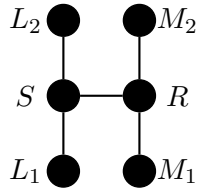

\begin{Lemma} \label{lem:gt:H}
No tree has $H$ as its $\gamma$-graph.
\end{Lemma}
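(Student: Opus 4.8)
The plan is to suppose for contradiction that some tree $T$ has $T(\gamma) \cong H$ and to extract structural information about $T$ from the labelled configuration in Figure~\ref{fig:H}. The graph $H$ is a $2 \times 3$ grid (a $P_2 \bbox P_3$): it has two stems $S$ and $R$ of degree three, and two pairs of leaves, $L_1, L_2$ attached to $S$ and $M_1, M_2$ attached to $R$, together with the edge $SR$. I would treat each of $S$ and $R$ as a stem of degree three in $T(\gamma)$ and apply Lemma~\ref{lem:gt:stem} to both. This gives a vertex $x \in S$ and neighbours $y_1, y_2$ of $x$ in $T$ with $L_i = (S - \{x\}) \cup \{y_i\}$, and symmetrically a vertex $x' \in R$ and neighbours $y_1', y_2'$ of $x'$ with $M_i = (R - \{x'\}) \cup \{y_i'\}$. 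Because $S$ and $R$ are themselves adjacent in $H$, there are adjacent vertices $u \in S$ and $w \in R$ with $R = (S - \{u\}) \cup \{w\}$.

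The key step is then to compare the vertex $x$ (the coordinate along which $S$ swaps to reach its two leaves) with the vertex $u$ (the coordinate along which $S$ swaps to reach $R$). I expect the argument to split on whether $x = u$ or $x \neq u$. If $x = u$, then $S$, $L_1$, $L_2$, and $R$ all agree off the single vertex $x$, meaning $x$ swaps with each of $y_1, y_2, w$; by Observation~\ref{spn2} this forces $pn(S, x) = \{x\}$, so $x$ is an $S$-self private neighbour with no external private neighbour, and the three neighbours $y_1, y_2, w$ of $x$ are interchangeable. I would then need to show that $R$ inherits the same self-private-neighbour structure: since $R = (S - \{x\}) \cup \{w\}$ and $pn(R, w) = \{w\}$ by Observation~\ref{basic}, $w$ can itself swap back to $x$ as well as to each of its own neighbours $y_1', y_2'$ — and critically, $w$ could also swap to $y_1$ or $y_2$ if those are neighbours of $w$, or the symmetry of the self-private-neighbour condition will generate unwanted additional $\gamma$-sets or edges, contradicting that $H$ is exactly a $2 \times 3$ grid with no diagonal edges. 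The hard part, and the main obstacle, is pinning down exactly which adjacencies in $T$ are forced and deriving a concrete contradiction with the grid structure of $H$ (for instance, producing a fourth common neighbour, a short cycle in $T$ violating that $T$ is a tree, or an extra $\gamma$-set adjacent to two of $L_1, L_2, M_1, M_2$ that would create a $C_4$ or a missing/extra edge relative to $H$).

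If instead $x \neq u$, I would argue that swapping on $x$ and swapping on $u$ are independent moves at $S$: the set $(S - \{x, u\}) \cup \{y_i, w\}$ should then be a $\gamma$-set adjacent to both $L_i$ and $R$, which would create the "diagonal" $\gamma$-set corresponding to simultaneously reaching an $L$-vertex and the $R$-column. This produces a vertex of $T(\gamma)$ adjacent to both a leaf $L_i$ and the stem $R$, which is impossible in $H$ since the leaves $L_1, L_2$ have degree one and are adjacent only to $S$. The delicate point here is confirming that the combined swap indeed yields a $\gamma$-set (i.e.\ that it still dominates $T$), which follows because $y_i$ and $w$ are external private neighbours / swap partners of distinct vertices $x \neq u$ in the tree, so no vertex loses its only dominator; Lemma~\ref{lem:gt:change} and the tree (cycle-free) structure should rule out interference between the two swaps. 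Combining the two cases, every possibility contradicts $T(\gamma) \cong H$, and so no tree has $H$ as its $\gamma$-graph; the overall strategy is thus to leverage Lemma~\ref{lem:gt:stem} at both stems and then use the adjacency $SR$ to force either a forbidden cycle in $T$ or a forbidden edge/vertex in $H$.
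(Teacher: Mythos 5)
Your setup is the same as the paper's: apply Lemma~\ref{lem:gt:stem} at both stems, bring in the edge $SR$ via some $u \in S$, $v \notin S$ with $R = (S-\{u\})\cup\{v\}$, and split on whether the pivot $x$ at $S$ equals $u$. But neither branch is actually closed. In the branch $x \neq u$, your key step is that the double swap $(S - \{x, u\}) \cup \{y_i, v\}$ is a dominating set, justified by ``no vertex loses its only dominator.'' That is exactly the step that can fail: a vertex $t$ whose only dominators in $S$ are $x$ and $u$ (e.g.\ a common neighbour of $x$ and $u$ lying outside $N[y_i] \cup N[v]$) loses both at once, and nothing in Lemma~\ref{lem:gt:change} rules this out --- indeed the paper's own proof of Lemma~\ref{lem:gt:stem} (and later Propositions~\ref{prop:R:sets} and~\ref{adjacent}) revolves around precisely such undominated witnesses $z_{ij}$. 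The paper closes this branch differently: Observation~\ref{spn2} gives $pn(x,S)=\{x\}$, so by Lemma~\ref{lem:gt:change} $x$ has at most one $R$-external private neighbour and can therefore be swapped out of $R$; the resulting $\gamma$-set must be $M_1$ or $M_2$ (since $R$ has only three neighbours in $H$ and $S$ is reached from $R$ by swapping a different vertex), which forces $x = w$, hence $N(x)=\{y_1,y_2\}=\{z_1,z_2\}$ by Observation~\ref{spn}, and then $M_1 = (L_1 - \{u\})\cup\{v\}$ is an edge $L_1M_1$ that $H$ does not have.

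In the branch $x = u$ you explicitly concede that you have not derived the contradiction, and the one fact you do assert, ``$pn(R,w)=\{w\}$ by Observation~\ref{basic},'' is not what that observation yields: it yields $pn(R,w) = pn(S,x) \subseteq \{x,w\}$, hence $pn(R,w)=\{x\}$ once $pn(S,x)=\{x\}$ is known. To get $pn(R,w)=\{w\}$ you must first show that the pivot of $R$ (the vertex removed to reach both $M_1$ and $M_2$) coincides with the swapped-in vertex $w=v$, which requires running the other branch's argument symmetrically at $R$. Once both facts are available the finish is one line and you should state it: $w$ being an $R$-self private neighbour means no vertex of $R-\{w\} = S-\{x\}$ dominates $w$, so $w$ is an $S$-external private neighbour of $x$, contradicting $pn(x,S)=\{x\}$ (equivalently, Observation~\ref{basic} would force $\{x\}=\{w\}$, impossible since $x \sim w$ in a tree). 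A further slip worth fixing: $H$ is not the $2\times 3$ grid $P_2 \bbox P_3$; it is the six-vertex tree with two adjacent degree-three vertices, which is exactly what makes $L_1,L_2,M_1,M_2$ leaves and Lemma~\ref{lem:gt:stem} applicable.
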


\begin{proof}
Let $H$ be labeled as in Figure~\ref{fig:H} and suppose $H$ is the $\gamma$-graph of a tree $T$.  By Lemma~\ref{lem:gt:stem}, there exists vertices $x\in S$ and $w\in R$ so that $L_i = (S - \{x\}) \cup \{y_i\}$ and $M_i = (R - \{w\}) \cup \{z_i\}$ where $y_1, y_2\in N(x)$ and $z_1,z_2\in N(w)$.  Further there exists $u\in S$ so that $R = (S - \{u\}) \cup \{v\}$ with $u\sim v$. 
By Observation~\ref{spn2}, $x$ has no $S$-external private neighbours and $w$ has no $R$-external private neighbours. 

Suppose first $x \in R$.  Then $x\ne u$ and by Lemma~\ref{lem:gt:change}, $x$ has at most one $R$-external private neighbour.  Hence, there exists a vertex $y' \in N(x)$ such that $Q := (R - \{x\}) \cup \{y'\}$ is a $\gamma$-set of $T$. As $x\ne u$ and $R$ has only three neighbours, it follows that $Q\in \{M_1, M_2\}$ and therefore $x=w$. By Observation~\ref{spn}, we have $N(x)=\{y_1, y_2\}$ and $N(w)=\{z_1, z_2\}$ and therefore $\{y_1, y_2\}=\{z_1, z_2\}$.  Without loss of generality $y_1=z_1$.  Then $M_1 = L_1 - \{u\} \cup \{v\}$.  Thus, $L_1$ and $M_1$ are neighbours in $H$, which is a contradiction.  

Hence it must be that $x \notin R$.  By symmetry, $w \notin S$. It follows that $x=u$ and $w=v$.  In particular, $x\sim w$. But as $w$ has no $R$-external private neighbours, it is a self private neighbour in $R$, and hence must be an external private neighbour of $x$ in $S$.  But $x$ has no $S$-external private neighbours, which is a contradiction.  The result follows.
\end{proof}

We now show that if a graph is a $\gamma$-graph of a tree, then deleting leaves produces graphs that must also be $\gamma$-graphs of trees.  In particular, if a tree is a $\gamma$-graph of a tree, then all subtrees are also $\gamma$-graphs of trees.

\begin{Lemma} \label{unique}
If $G$ is a $\gamma$-graph of a tree $T$, $D$ is a leaf of $G$ and $x\in D$ has fewer than two $D$-external private neighbours, then $D$ is the unique $\gamma$-set of $T$ containing $x$.
\end{Lemma}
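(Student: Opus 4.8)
The plan is to root $T$ at $x$ and show that, under this rooting, $D$ is forced to be the unique highest $\gamma$-set; then I would derive a contradiction from the existence of a second $\gamma$-set containing $x$ by moving ``upward'' through $T(\gamma)$ while always keeping $x$ in the set.

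First I would record the consequences of $D$ being a leaf. By Lemma~\ref{lem:gt:pn}, $x$ is the unique vertex of $D$ with fewer than two $D$-external private neighbours, so every $z \in D - \{x\}$ has at least two $D$-external private neighbours. Since a vertex has only one parent when $T$ is rooted at $x$, at least one of these external private neighbours of $z$ is a child of $z$; hence every $z \in D - \{x\}$ has a child in $pn(z, D)$. By Lemma~\ref{prop:tree:high:pn} this means $D$ is the highest $\gamma$-set of $(T, x)$, and by Theorem~\ref{highest} it is the unique one. I would also note, via Observation~\ref{basic}, that a vertex with at least two external private neighbours can never be the vertex swapped out along an edge of $T(\gamma)$; consequently every neighbour of $D$ in $T(\gamma)$ is obtained by swapping out $x$, and so no neighbour of $D$ contains $x$.

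The key step is the following claim: from any $\gamma$-set $F \neq D$ there is an adjacent $\gamma$-set of strictly smaller height obtained by swapping some $z \in F - \{x\}$ for its parent $w$. To see this, since $F$ is not the highest $\gamma$-set, Lemma~\ref{prop:tree:high:pn} supplies a $z \in F - \{x\}$ with no child in $pn(z, F)$; as the private neighbours of $z$ lie in its closed neighbourhood, we get $pn(z, F) \subseteq \{z, w\}$, where $w$ is the parent of $z$. One checks that $w \notin F$ (otherwise $z$ would have no private neighbour at all) and that $(F - \{z\}) \cup \{w\}$ is still dominating, hence a $\gamma$-set adjacent to $F$; since $w$ is the parent of $z$, its height is one less than that of $F$. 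Crucially, this swap never removes $x$.

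To finish, I would assume for contradiction that some $\gamma$-set $F \neq D$ contains $x$. Applying the claim repeatedly produces a sequence $F = F_0, F_1, F_2, \ldots$ of $\gamma$-sets, each adjacent to the next, of strictly decreasing height, each containing $x$. As heights are bounded below, the sequence terminates at a $\gamma$-set from which no upward move exists, which by the claim must be the unique highest $\gamma$-set $D$; say $F_k = D$ with $k \ge 1$. Then $F_{k-1}$ is a neighbour of $D$ in $T(\gamma)$ distinct from $D$ and containing $x$, contradicting that no neighbour of $D$ contains $x$. Hence $D$ is the unique $\gamma$-set of $T$ containing $x$. I expect the main obstacle to be the careful verification of the key claim---in particular, confirming that the swapped-out vertex can always be chosen different from $x$ and that replacing $z$ by its parent preserves domination---since this is where the leaf hypothesis and Lemma~\ref{prop:tree:high:pn} must be combined correctly.
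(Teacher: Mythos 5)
Your proof is correct, but it takes a genuinely different route from the paper's. The paper argues locally: it considers each component $T'$ of $T - N[x]$, shows $D \cap V(T')$ is an isolated vertex of $T'(\gamma)$ (every vertex of $D - \{x\}$ has two external private neighbours, so nothing can be swapped), invokes the connectivity of $\gamma$-graphs of trees (Theorem~\ref{thm:gt:con}) to conclude $T'$ has a unique $\gamma$-set, and then finishes with a counting argument forcing any $\gamma$-set $E$ containing $x$ to meet $N[x]$ only in $x$ and to agree with $D$ on every component. You instead root $T$ at $x$, observe via Lemma~\ref{lem:gt:pn} and Lemma~\ref{prop:tree:high:pn} that $D$ is the unique highest $\gamma$-set of $(T,x)$, note via Observation~\ref{basic} that only $x$ can be swapped out of $D$ so no neighbour of $D$ in $T(\gamma)$ contains $x$, and then run a height-decreasing reconfiguration walk (each step swaps a non-root vertex with no child private neighbour for its parent, hence never discards $x$) that must terminate at $D$, producing a forbidden neighbour of $D$ containing $x$. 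Your key claim is verified correctly: the chosen $z$ satisfies $pn(z,F) \subseteq \{z,w\}$ with $w$ the parent, $w \notin F$ since otherwise $pn(z,F) = \emptyset$, and the swap preserves domination and drops the height by exactly one. What your approach buys is a clean reuse of the Edwards--MacGillivray--Nasserasr height machinery already deployed in Section~\ref{sec:alg}, and it sidesteps the paper's assertion that $D \cap V(T')$ dominates $T'$, which requires a moment's care; what the paper's approach buys is independence from the uniqueness of the highest $\gamma$-set and a more structural picture of how $D$ decomposes around $x$. One small caveat shared by both arguments: the appeal to Lemma~\ref{lem:gt:pn} presumes $T$ has at least three vertices, so the degenerate cases should be dispatched separately (they are trivial).
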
 

\begin{proof}
 Consider $T'$ a component of $T-N[x]$.  Then $D\cap V(T')$ is clearly a dominating set of $T'$.  If $D\cap V(T')$ is not also a $\gamma$-set of $T'$, then $D$ is not minimal.   Hence $D\cap V(T')$ is a $\gamma$-set of $T'$ and since each vertex of $D-\{x\}$ has at least two $D$-external private neighbours, $D\cap V(T')$ has no neighbours in $T'(\gamma)$. It follows from Theorem~\ref{thm:gt:con} that $D\cap V(T')$ is the only $\gamma$-set of $T'$.  As $T$ is a tree, the vertices in $N(x)$ are adjacent to at most one vertex in $T'$.  Recall each vertex of $D\cap V(T')$ has at least two $D$-external private neighbours. Let $E$ be a $\gamma$-set of $T$ containing $x$. Then $E\cap V(T')$ must contain at least $|D\cap V(T')|$ vertices.  Since $T'$ was arbitrary,  $|D|=|E|=\gamma(T)$ and $|D\cap V(T')|=|E\cap V(T')|$, the set $E$ has at most one vertex in $N[x]$, namely $x$. For any component, $T'$ of $T-N[x]$,  $D\cap V(T')$ is the unique $\gamma$-set of $T'$.  Therefore $D$ is the unique $\gamma$-set of $T$ containing $x$.
\end{proof}

\begin{theorem} \label{thm:gt:leaf}
If $G$ is a $\gamma$-graph of a tree $T$ and $L$ is a leaf of $G$, then $G - L$ is a $\gamma$-graph of some tree $T'$.
\end{theorem}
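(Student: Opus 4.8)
The plan is to realise $G - L$ as the $\gamma$-graph of a tree $T'$ obtained from $T$ by a local modification near the distinguished vertex of $L$. By Lemma~\ref{lem:gt:pn} there is a unique vertex $v \in L$ with fewer than two $L$-external private neighbours, and by Lemma~\ref{unique} the set $L$ is the only $\gamma$-set of $T$ containing $v$; write $L' = (L - \{v\}) \cup \{y\}$ for the unique neighbour of $L$ in $G$, where $y$ is either the single external private neighbour of $v$ or, when $v$ is a self private leaf, its stem. Since $L$ is a leaf of $G$, the only edge of $G$ meeting $L$ is $LL'$, so it suffices to build $T'$ with $\gamma(T') = \gamma(T)$ whose $\gamma$-sets are precisely the $\gamma$-sets of $T$ other than $L$, realising the same swaps between them; then $T'(\gamma) \cong G - L$. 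Observe that if the modified tree contains $T$ as an induced subgraph on the vertices eligible to lie in $\gamma$-sets, then two surviving $\gamma$-sets are adjacent in $T'(\gamma)$ exactly when they were adjacent in $T(\gamma)$. Hence the only two points to check are that $\gamma$ is unchanged and that exactly $L$ disappears while no new $\gamma$-set is created.

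First I would dispose of the case in which $v$ is a self private leaf with stem $u$ (the second alternative of Lemma~\ref{lem:gt:pn}). Every $\gamma$-set other than $L$ must dominate the leaf $v$ and, not containing $v$, must therefore contain $u$, whereas $u \notin L$. Take $T'$ to be $T$ with one new pendant vertex $p$ attached to $u$. A short argument gives $\gamma(T') = \gamma(T)$: the set $L'$ still dominates $T'$, while dominating the new leaf $p$ forces $u$ into any $\gamma$-set, since dominating $v$ already forces $u$ or $v$ and $p$ cannot be used more cheaply. Then $L$ is no longer dominating (it misses $p$), every other $\gamma$-set of $T$ survives because it contains $u$, and—crucially—no $\gamma$-set of $T'$ uses $p$, because a set of size $\gamma(T) - 1$ dominating $V(T) - \{u\}$ would have to dominate $v$ and hence $u$, contradicting minimality. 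Thus the $\gamma$-sets of $T'$ are exactly those of $T$ except $L$, and $T'(\gamma) = G - L$.

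The remaining case, where $v$ has a single external private neighbour $y$, is the main obstacle, and a pendant attachment no longer works: here the surviving $\gamma$-sets need not share any vertex absent from $L$ (they may form a star of swaps about $y$), and attaching a pendant to $y$ both destroys $\gamma$-sets that avoid $y$ and introduces a spurious $\gamma$-set of the form $(L - \{v\}) \cup \{p\}$. Instead I would root $T$ at $y$ and excise the whole branch $B$ hanging below $v$ (the component of $T - y$ containing $v$), setting $T' = T - V(B)$, which is again a tree. The guiding idea is that $v$ is used in $L$ only to reach $y$ across the single edge $vy$, so removing $B$ should delete exactly the swap $v \leftrightarrow y$ that produced $L$, every other $\gamma$-set dominating $y$ from the surviving side. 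Making this precise is the heart of the proof: using Lemma~\ref{unique} and the rigidity it forces on the components of $T - N[v]$, I would argue that every $\gamma$-set restricts to the \emph{same} configuration on $V(B) - \{v\}$, so that $\gamma(T') = \gamma(T) - |D \cap V(B)|$ is a constant drop, that $L$ alone fails to restrict to a $\gamma$-set of $T'$ since its restriction no longer dominates $y$, and that the restriction map is an adjacency-preserving bijection from the surviving $\gamma$-sets onto the $\gamma$-sets of $T'$. The delicate point, and the step I expect to fight with, is ruling out surviving $\gamma$-sets that differ from one another \emph{inside} $B$: any such pair would be merged by the excision and would break the bijection. Establishing this uniqueness within the branch, together with the constancy of the $\gamma$-drop, is where the real work lies.
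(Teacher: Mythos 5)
Your first case (where $v$ is a leaf of $T$ and an $L$-self private neighbour, handled by hanging a new pendant off the stem of $v$) matches the paper's construction and is fine. The problem is your second case. The paper splits ``$v$ has exactly one external private neighbour $y$'' into two further subcases according to whether $v$ is also an $L$-self private neighbour, and your single construction --- delete the entire component $B$ of $T-y$ containing $v$ --- is the paper's construction only for the subcase where $v$ is \emph{not} self private. When $v$ is self private, $pn(v,L)=\{v,y\}$, and the need to dominate $v$ itself is what forces every $\gamma$-set to meet $\{v,y\}$; excising $v$ together with $B$ removes that constraint and can create new $\gamma$-sets. Concretely, let $T$ be the tree on $\{y,x,c,d,d_1,d_2,z,w,w_1,w_2\}$ with edges $yx$, $xc$, $cd$, $dd_1$, $dd_2$, $yz$, $zw$, $ww_1$, $ww_2$. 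Then $\gamma(T)=3$ and the only $\gamma$-sets are $L=\{x,d,w\}$ and $\{y,d,w\}$, so $G=T(\gamma)\cong K_2$ and $L$ is a leaf of $G$ whose distinguished vertex $x$ is a self private neighbour with unique external private neighbour $y$ (and $x$ is not a leaf of $T$, so this is squarely your second case). Your $T'=T-\{x,c,d,d_1,d_2\}$ has the two $\gamma$-sets $\{y,w\}$ and $\{z,w\}$, hence $T'(\gamma)\cong K_2\neq K_1\cong G-L$; the spurious set $\{z,w\}$ appears precisely because, once $x$ is gone, $y$ can be dominated from $z$ instead of from $\{x,y\}$.

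The paper's fix in this self-private subcase is different: keep $v$ as a pendant of $y$, delete only the \emph{descendants} of $v$, and attach one additional new leaf to $y$. The two pendants force $y$ into every $\gamma$-set of $T'$, which reproduces exactly the role that ``dominate $v$ and $y$'' played in $T$ and eliminates the spurious sets (in the example above this yields the unique $\gamma$-set $\{y,w\}$, as required). You should also note that even in the subcase where your excision is the correct construction, you have explicitly deferred the two substantive verifications (constancy of $|D\cap V(B)|$ over the surviving $\gamma$-sets, and that restriction is an adjacency-preserving bijection onto the $\gamma$-sets of $T'$), so the argument is incomplete there as well, although the paper is equally terse on these points. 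The essential correction you need is the missing subcase split and the modified construction when $v$ is a self private neighbour with one external private neighbour.
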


\begin{proof}
By Lemma~\ref{lem:gt:pn}, exactly one vertex $x$ in $L$ has fewer than two $L$-external private neighbours.  Moreover, either $x$ has exactly one $L$-external private neighbour or $x$ is a leaf in $T$ and an $L$-self private neighbour. By Lemma~\ref{unique}, $L$ is the unique $\gamma$-set of $T$ containing $x$.

If  $x$ is a leaf of $T$ and is an $L$-self private neighbour, then $T'$ is formed by adding a leaf to the stem of $x$.
Suppose on the other hand, $x$ has exactly one external private neighbour $y$.  If $x$ is a self private neighbour, then $T'$ is formed by rooting $T$ at $y$, deleting the descendants of $x$, and adding a leaf to $y$.  Otherwise, $T'$ is formed by rooting $T$ at $y$, and deleting $x$ and its descendants.  \end{proof}

\begin{Corollary} \label{cor:gt:subtree}
If $G$ is a tree and a $\gamma$-graph of a tree $T$, then every subtree of $G$ is a $\gamma$-graph of some tree.
\end{Corollary}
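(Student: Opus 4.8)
The plan is to reduce the Corollary to repeated application of Theorem~\ref{thm:gt:leaf}, which already handles the deletion of a single leaf. The substantive work is to show that any subtree $H$ of $G$ can be reached from $G$ by successively deleting leaves, so that Theorem~\ref{thm:gt:leaf} applies at every intermediate stage.

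First I would record the structural fact that a subtree $H$ of a tree $G$ is geodesically closed: for any $u, v \in V(H)$, the unique path from $u$ to $v$ in $G$ lies inside $H$. Indeed, since $H$ is connected it contains some path from $u$ to $v$, and in a tree that path is the unique such path. Fixing a root $r \in V(H)$, this says $V(H)$ is ancestor-closed, and dually $V(G) \setminus V(H)$ is descendant-closed: if $w \notin V(H)$, then every descendant of $w$ also lies outside $V(H)$.

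Next I would extract an ordering $w_1, w_2, \ldots, w_m$ of the vertices of $V(G) \setminus V(H)$, chosen deepest-first. Setting $G_0 = G$ and $G_i = G_{i-1} - w_i$, the descendant-closure property guarantees that at each stage the deepest remaining vertex outside $H$ has no children in $G_{i-1}$, hence is a leaf of $G_{i-1}$; moreover each $G_i$ is again a tree containing $H$, and $G_m = H$. This is the step I expect to require the most care, as it is where the notion of \emph{subtree} must be pinned down (a connected subgraph of a tree, equivalently a geodesically closed vertex set) and where the claim that a leaf is available at each stage is verified.

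Finally I would run the induction. The base case $G_0 = G$ is a $\gamma$-graph of a tree by hypothesis; and if $G_{i-1}$ is a $\gamma$-graph of a tree, then since $w_i$ is a leaf of $G_{i-1}$, Theorem~\ref{thm:gt:leaf} shows $G_i = G_{i-1} - w_i$ is again a $\gamma$-graph of a tree. After $m$ steps we conclude that $H = G_m$ is a $\gamma$-graph of a tree, as required. I note that Theorem~\ref{thm:gt:leaf} does not itself require the ambient graph to be a tree, so the sole role of the hypothesis that $G$ is a tree is to make subtrees meaningful and to guarantee the leaf-peeling order exists.
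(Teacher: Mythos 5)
Your proposal is correct and matches the paper's intent: the corollary is stated as an immediate consequence of Theorem~\ref{thm:gt:leaf}, with the leaf-peeling argument (order the vertices of $V(G)\setminus V(H)$ deepest-first so that each is a leaf when deleted) left implicit. You have simply written out that routine reduction in full, which is the right reading of the statement.
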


\begin{Corollary} \label{cor:gt:H}
If $G$ is a tree and a $\gamma$-graph of a tree $T$, then $\mathbf{H}$ is not a subtree of $G$.
\end{Corollary}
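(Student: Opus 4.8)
The plan is to derive this immediately from the two preceding results via a short argument by contradiction. Suppose, for the sake of contradiction, that $H$ (the graph of Figure~\ref{fig:H}) occurs as a subtree of $G$. Since $G$ is by hypothesis a tree and a $\gamma$-graph of a tree $T$, Corollary~\ref{cor:gt:subtree} applies verbatim: every subtree of $G$ is a $\gamma$-graph of some tree. In particular, the subtree $H$ would itself be the $\gamma$-graph of some tree.

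This conclusion is precisely what Lemma~\ref{lem:gt:H} forbids, since that lemma asserts that no tree has $H$ as its $\gamma$-graph. The two statements cannot both hold, so the assumption that $H$ is a subtree of $G$ is untenable, and the corollary follows.

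I expect no substantive obstacle, as all of the genuine content has already been established upstream: Lemma~\ref{lem:gt:H} supplies the structural argument ruling out $H$, while Theorem~\ref{thm:gt:leaf} and its Corollary~\ref{cor:gt:subtree} provide the leaf-deletion machinery showing that subtrees of tree $\gamma$-graphs are again tree $\gamma$-graphs. The one point meriting mild care is to confirm that Corollary~\ref{cor:gt:subtree} truly covers an \emph{arbitrary} subtree $H$ of $G$ and not merely those obtained by a single leaf deletion; this is fine, because any subtree of a tree is reachable by repeatedly deleting a leaf lying outside it, each deletion preserving the $\gamma$-graph-of-a-tree property by Theorem~\ref{thm:gt:leaf}.
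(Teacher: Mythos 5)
Your proposal is correct and is essentially identical to the paper's own proof, which likewise cites Lemma~\ref{lem:gt:H} together with Corollary~\ref{cor:gt:subtree}. Your added remark on reaching an arbitrary subtree by repeated leaf deletions is the (implicit) justification of Corollary~\ref{cor:gt:subtree} itself and is a reasonable point to make explicit.
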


\begin{proof}
The result immediately follows from Lemma~\ref{lem:gt:H} and Corollary~\ref{cor:gt:subtree}.
\end{proof}

From the previous corollary, we know that if a tree is a $\gamma$-graph of a tree, then the vertices of degree at least three are not adjacent.  We now wish to show  all trees without adjacent vertices of degree at least three are the $\gamma$-graph of a tree.  The proof is constructive.  We establish first some building blocks.

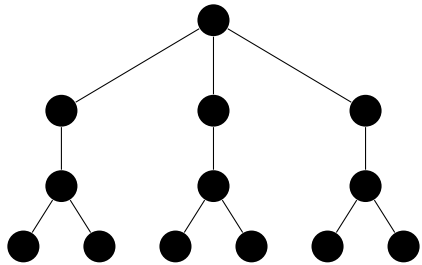
\begin{figure}[htbp]
\begin{center}
\begin{tikzpicture}
\node[vertex] (l1) at (0, 0) {};
\node[vertex] (l2) at (1, 0) {};
\node[vertex] (l3) at (2, 0) {};
\node[vertex] (l4) at (3, 0) {};
\node[vertex] (l5) at (4, 0) {};
\node[vertex] (l6) at (5, 0) {};

\node[vertex] (s1) at (0.5, 0.8) {};
\node[vertex] (s2) at (2.5, 0.8) {};
\node[vertex] (s3) at (4.5, 0.8) {};

\node[vertex] (t1) at (0.5, 1.8) {};
\node[vertex] (t2) at (2.5, 1.8) {};
\node[vertex] (t3) at (4.5, 1.8) {};

\node[vertex] (x) at (2.5, 3) {};

\draw (l1) -- (s1);
\draw (l2) -- (s1);
\draw (l3) -- (s2);
\draw (l4) -- (s2);
\draw (l5) -- (s3);
\draw (l6) -- (s3);
\draw (s1) -- (t1);
\draw (s2) -- (t2);
\draw (s3) -- (t3);
\draw (t1) -- (x);
\draw (t2) -- (x);
\draw (t3) -- (x);
\end{tikzpicture}
\caption{The graph $Y_3$}
\end{center}
\end{figure}

\begin{theorem} \label{thm:gt:star}
Let $Y_n$ be the graph obtained by taking $n$ copies of $K_{1, 3}$ and joining a leaf of each copy to a common vertex.  Then $Y_n(\gamma) \cong K_{1, n}$, and in every $\gamma$-set which is a leaf of $K_{1, n}$, every vertex has an external private neighbour.
\end{theorem}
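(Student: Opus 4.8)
The plan is to determine all $\gamma$-sets of $Y_n$ explicitly, read off the adjacencies to identify $Y_n(\gamma)$, and then verify the private-neighbour claim by inspection. Label the $i$-th copy of $K_{1,3}$ so that its centre is $s_i$ and its leaves are $a_i$, $b_i$, and $t_i$, where $t_i$ is the leaf joined to the common vertex $x$; thus the edges of $Y_n$ are $s_i a_i$, $s_i b_i$, $s_i t_i$ $(1 \le i \le n)$ together with $t_i x$ $(1 \le i \le n)$.

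First I would show $\gamma(Y_n) = n+1$ and list the $\gamma$-sets. Since $a_i$ and $b_i$ are leaves whose only neighbour is $s_i$, every dominating set $D$ must, for each $i$, contain $s_i$ or else contain both $a_i$ and $b_i$; hence $D$ meets the $i$-th copy in at least one vertex, and in at least two if $s_i \notin D$. Dominating $x$ moreover forces $x \in D$ or $t_j \in D$ for some $j$. Counting vertices copy-by-copy (with the case $x \notin D$ forcing some copy to contribute at least two vertices) then gives $|D| \ge n+1$. For the characterisation I would examine the equality case: if $x \in D$ then every copy contributes exactly one vertex, which must be its centre, giving $S_0 = \{s_1, \dots, s_n, x\}$; if $x \notin D$ then exactly one copy, say the $i$-th, contributes two vertices, and those two must be $s_i$ and $t_i$ (a pendant leaf cannot replace $s_i$, since its twin would be left undominated), giving $D_i = \{s_1, \dots, s_n, t_i\}$. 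Thus the $\gamma$-sets are precisely $S_0$ and $D_1, \dots, D_n$, and $S_0$ is the unique $\gamma$-set containing $x$.

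Next I would compute the edges of $Y_n(\gamma)$. Each $D_i = (S_0 - \{x\}) \cup \{t_i\}$ with $x \sim t_i$, so $S_0$ is adjacent to every $D_i$. For $i \ne j$ the sets $D_i$ and $D_j$ differ only in $t_i$ versus $t_j$, but $t_i$ and $t_j$ are non-adjacent (their only common neighbour is $x$ and $Y_n$ is a tree), so $D_i \not\sim D_j$. Hence $Y_n(\gamma)$ is the star with centre $S_0$ and leaves $D_1, \dots, D_n$, that is, $K_{1,n}$.

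Finally, the leaves of $K_{1,n}$ are exactly $D_1, \dots, D_n$, and I would check the private-neighbour condition directly in each $D_i$. Every centre $s_j$ (including $s_i$) keeps its pendant leaves $a_j, b_j$ as external private neighbours, while $t_i$ has $x$ as an external private neighbour because $t_i$ is the only neighbour of $x$ lying in $D_i$. Thus every vertex of every leaf $\gamma$-set has an external private neighbour, as claimed. The step I expect to be the main obstacle is the characterisation of the $\gamma$-sets: the lower bound must be paired with a careful analysis of the equality case to rule out spurious configurations (for instance, using a pendant leaf or some $t_j$ in place of a centre), after which both the adjacency computation and the private-neighbour verification are routine.
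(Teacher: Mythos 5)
Your proposal is correct and takes essentially the same approach as the paper: both arguments explicitly determine the $\gamma$-sets to be $\{s_1,\dots,s_n,x\}$ and the $n$ sets $\{s_1,\dots,s_n,t_i\}$, then read off the star adjacency and verify the external private neighbours directly. The only (minor) difference is that the paper gets the characterization immediately from the standard fact that a vertex adjacent to two leaves lies in every $\gamma$-set, whereas you reach the same list via a copy-by-copy counting bound together with an analysis of the equality case.
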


\begin{proof}
Since every stem of $Y_n$ is adjacent to two leaves, every $\gamma$-set of $Y_n$ must contain every stem.  Only the common centre vertex is left to be dominated, so we can form a $\gamma$-set by adding this vertex or any of its neighbours.  Further, the only edges are between the $\gamma$-set containing the common centre vertex and each of the other $\gamma$-sets.  Finally, in each of these other $\gamma$-sets, every stem has at least two external private neighbours, its leaves, and the additional vertex has the common centre vertex as its private neighbour.  The result follows.
\end{proof}

We now provide a tool for combining these building blocks.

\begin{Lemma} \label{lem:gt:cut}
Suppose for $i=1,2$, $G_i$ is a $\gamma$-graph of tree $T_i$, $X_i$ is a leaf of $G_i$  and  every vertex in the dominating set $X_i$ has at least one $X_i$-external private neighbour. Then $G$, the graph formed by identifying $X_1$ and $X_2$ is the $\gamma$-graph of a tree $T :\cong T_1 \oplus^{v_1}_{v_2} T_2$, the tree formed by linking two $T_1$ and $T_2$ by creating a new vertex $v$ adjacent to $v_1\in V(T_1)$ and $v_2\in V(T_2)$.  

Furthermore let $Y\in V(G_1)$ be a dominating set of $T_1$ and $Y'\in V(G)$ be the corresponding dominating set of $T$.  If every vertex of $Y$ has a $Y$-external neighbour, then every vertex in $Y'$ has a $Y'$-external neighbour.
\end{Lemma}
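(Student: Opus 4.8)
The plan is to first pin down the two special vertices that the gluing must use. Since $X_i$ is a leaf of $G_i$ in which every vertex has at least one $X_i$-external private neighbour, Lemma~\ref{lem:gt:pn} forces exactly one vertex $x_i \in X_i$ to have exactly one $X_i$-external private neighbour $y_i$ (the self-private-leaf alternative of Lemma~\ref{lem:gt:pn} is excluded by the hypothesis), while Lemma~\ref{unique} shows $X_i$ is the \emph{unique} $\gamma$-set of $T_i$ containing $x_i$. I would then read the construction as taking $v_1 = x_1$ and $v_2 = x_2$, so that the new bridge vertex $v$ is adjacent to $x_1$ and $x_2$; the identified vertex of $G$ will correspond to the $\gamma$-set $X_1 \cup X_2$ of $T$.

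The crux is a counting sublemma: for $i = 1, 2$, every $A \subseteq V(T_i)$ that dominates $V(T_i) \setminus \{x_i\}$ satisfies $|A| \ge \gamma(T_i)$. I would prove it by contradiction: if $|A| < \gamma(T_i)$ then $x_i \notin N[A]$ (otherwise $A$ dominates all of $T_i$ and beats $\gamma(T_i)$), so $A \cup \{x_i\}$ is a $\gamma$-set of $T_i$ containing $x_i$; by Lemma~\ref{unique} it equals $X_i$, forcing $A = X_i \setminus \{x_i\}$, which fails to dominate the external private neighbour $y_i \neq x_i$ --- a contradiction. This sublemma does all the real work. Applying it to $D_1 = D \cap V(T_1)$ and $D_2 = D \cap V(T_2)$ for a dominating set $D$ of $T$ gives $|D| \ge \gamma(T_1) + \gamma(T_2)$ when $v \notin D$, and $|D| \ge \gamma(T_1) + \gamma(T_2) + 1$ when $v \in D$ (since then each $D_i$ need only dominate $T_i$ except possibly $x_i$). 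As $X_1 \cup X_2$ is dominating of size $\gamma(T_1)+\gamma(T_2)$, we conclude $\gamma(T) = \gamma(T_1) + \gamma(T_2)$, no $\gamma$-set contains $v$, and every $\gamma$-set $D$ splits as $D_1 \cup D_2$ with each $D_i$ a $\gamma$-set of $T_i$. Since $v \notin D$ must still be dominated, $x_1 \in D_1$ or $x_2 \in D_2$, whence $D_1 = X_1$ or $D_2 = X_2$ by Lemma~\ref{unique}. This identifies $V(T(\gamma))$ with $V(G_1)$ and $V(G_2)$ glued along $X_1 = X_2$, exactly $V(G)$.

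For the edges I would argue that the only adjacencies of $T$ not already present in $T_1 \sqcup T_2$ are the two edges at $v$, and $v$ lies in no $\gamma$-set; hence two $\gamma$-sets on the same side (say $D_1 \cup X_2$ and $D_1' \cup X_2$) are adjacent in $T(\gamma)$ exactly when $D_1, D_1'$ are adjacent in $G_1$, recovering all edges of $G_1$ and, symmetrically, of $G_2$. No edge joins an interior vertex of one side to an interior vertex of the other, because such $\gamma$-sets differ in both the $T_1$- and the $T_2$-part, so their symmetric difference has size at least four. Finally the hub $X_1 \cup X_2$ is adjacent precisely to $Y_1 \cup X_2$ and $X_1 \cup Y_2$, the images of the unique $G_i$-neighbours of the leaves $X_i$, which matches the adjacency of the identified vertex in $G$. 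Together these yield $T(\gamma) \cong G$.

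For the ``furthermore'' statement, take $Y \in V(G_1)$ with every vertex having a $Y$-external private neighbour, so the corresponding $\gamma$-set of $T$ is $Y' = Y \cup X_2$. I would show each external private neighbour persists: the only vertex whose adjacencies differ between $T$ and $T_1 \sqcup T_2$ is $v$, and $v \notin Y'$, so no closed neighbourhood $N[Y' \setminus \{w\}]$ can leak across the bridge --- concretely $N[X_2] \cap V(T_1) = \emptyset$ and $N[Y] \cap V(T_2) = \emptyset$, since the only $V(T_1)$--$V(T_2)$ path runs through $v$. Hence a $Y$-external private neighbour of $w \in Y$ remains $Y'$-external private, and an $X_2$-external private neighbour of $w \in X_2$ (which exists by the hypothesis on $X_2$) remains $Y'$-external private. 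I expect the counting sublemma and the exclusion of $v$ from every $\gamma$-set to be the main obstacle; once $\gamma(T) = \gamma(T_1) + \gamma(T_2)$ and the vertex description are in hand, the edge analysis and the private-neighbour persistence are routine bookkeeping resting on the fact that $v$ belongs to no $\gamma$-set.
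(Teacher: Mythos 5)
Your proposal is correct and takes essentially the same approach as the paper: both identify the unique swap vertex $x_i=v_i$ of $X_i$ via Lemma~\ref{lem:gt:pn}, bridge $T_1$ and $T_2$ with a new vertex $v$ adjacent to $v_1,v_2$, and use Lemma~\ref{unique} together with the external-private-neighbour hypothesis on $X_i$ to show no $\gamma$-set contains $v$, that $\gamma(T)=\gamma(T_1)+\gamma(T_2)$, and that every $\gamma$-set restricts to $X_i$ on at least one side. Your explicit counting sublemma and the persistence-of-private-neighbours bookkeeping are just more carefully spelled-out versions of steps the paper treats tersely.
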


\begin{proof}
By Lemma~\ref{lem:gt:pn}, exactly one vertex $v_i$ in $X_i$ has fewer than two external private neighbours.  It follows that $v_i$ has exactly one external private neighbour.  Form $T$ from $T_1$ and $T_2$ by adding a new vertex $v$ with $N(v)= \{v_1,v_2\}$.  We now show that $T(\gamma) \cong G$.

It is clear that $\gamma(T) \le \gamma(T_1) + \gamma(T_2)$.  Suppose some $\gamma$-set $D$ of $T$ contains $v$.  Then without loss of generality, $\gamma(T_1 - \{v_1\}) < \gamma(T_1)$.  Let $D'$ be a $\gamma$-set of $T - \{v_1\}$.  Then no neighbour of $v_1$ is in $D'$.  Hence $D' \cup \{v_1\}$ is a $\gamma$-set of $T_1$ and $v_1$ has no external private neighbours.  Then, by Lemma~\ref{unique}, $D' \cup \{v_1\}$ is the unique $\gamma$-set of $T_1$ containing $v_1$, so $D' \cup \{v_1\} = X_1$.  But this contradicts the presupposition that every vertex in $X_1$ has an $X_1$-external private neighbour.  Hence no $\gamma$-set of $T$ contains $v$, $\gamma(T) = \gamma(T_1) + \gamma(T_2)$ and every $\gamma$-set of $T$ is the union of a $\gamma$-set of $T_1$ and a $\gamma$-set of $T_2$.

Every $\gamma$-set of $T$ must contain $v_1$ or $v_2$ for $v$ to be dominated. Consider $X = X_1 \cup X_2$.  It is clear that $X$ is the unique $\gamma$-set containing both $v_1$ and $v_2$.  Since every other vertex in $X$ has least least two external private neighbours, and $v_1$ and $v_2$ have exactly one external private neighbour, $X$ is adjacent to two $\gamma$-sets. Let $D$ be a dominating set of $T$ other than $X$.  If $v_2\in D$ and by Lemma~\ref{unique}, $D\cap T_2\cong X_2$ and every vertex in $D\cap T_2$ has at least two $D$-external private neighbours.  Therefore the subgraph of the  $\gamma$-sets of $T$ containing $v_2$  in $T(\gamma)$ is $G_1$ and similarly the subgraph of the  $\gamma$-sets of $T$ containing $v_1$  in $T(\gamma)$ is $G_2$.  Hence, the $\gamma$-graph of $T$ is $G$ as required. The final statement follows directly from the above construction.
\end{proof}

We can now prove the main result.

\begin{theorem}\label{trees}
If $G$ is a tree, then $G$ is a $\gamma$-graph of some tree if and only if $H$ is not a subtree of $G$.
\end{theorem}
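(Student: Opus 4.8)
The statement splits into two implications, of which one is already established. The forward direction---if the tree $G$ is the $\gamma$-graph of a tree then $H$ is not a subtree of $G$---is exactly Corollary~\ref{cor:gt:H}, so all the work lies in the converse.

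I would begin the converse by restating the hypothesis structurally. For a tree $G$, the graph $H$ is a subtree of $G$ if and only if $G$ has two \emph{adjacent} vertices each of degree at least three: given such a pair $u\sim v$, choosing two further neighbours of each produces six vertices that must be distinct because $G$ is acyclic, and these span a copy of $H$; conversely the two degree-three vertices of $H$ pull back to such a pair. Hence it suffices to prove that every tree $G$ whose vertices of degree at least three form an independent set is the $\gamma$-graph of some tree. I would prove the following slightly stronger statement by induction, so that Lemma~\ref{lem:gt:cut} can be applied repeatedly: such a $G$ is realizable by a tree $T$ in which, in addition, every leaf of $G$ (viewed as a $\gamma$-set) has all of its vertices equipped with an external private neighbour.

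The construction is by assembling two building blocks. The path $P_2$ has $\gamma$-graph $K_2$ with \emph{both} leaves satisfying the external-private-neighbour condition (a one-line check), and, by Theorem~\ref{thm:gt:star}, $Y_n$ has $\gamma$-graph $K_{1,n}$ with all $n$ leaves satisfying it; I would use the latter only for $n\ge 3$. I would then decompose the edge set of $G$: each vertex $b$ of degree at least three claims the star $K_{1,\deg b}$ consisting of $b$ and all its incident edges, and every remaining edge forms its own $K_2$. Independence of the high-degree vertices makes this a genuine partition of $E(G)$, and one checks that two distinct blocks meet in at most one vertex, always a degree-two vertex of $G$, while each leaf of $G$ lies in a single block. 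Realizing the star blocks by copies of $Y_{\deg b}$ and the single-edge blocks by copies of $P_2$, I would assemble along the ``block tree'' (which is a tree because $G$ is), adding one block at a time so that each new block meets the current partial assembly in exactly one shared vertex. At that vertex---a leaf of both sides that is good on the block side by the above and on the assembly side by the furthermore clause of Lemma~\ref{lem:gt:cut} (inductively)---the lemma glues the block on and certifies that the enlarged assembly is again the $\gamma$-graph of a tree with all leaves good. After all blocks are added the assembly is $G$; the degenerate case $G=K_1$ is handled separately via $P_3(\gamma)\cong K_1$.

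The main obstacle is the bookkeeping in the converse: verifying that the star/edge decomposition has precisely the claimed intersection pattern, so that each application of Lemma~\ref{lem:gt:cut} identifies a pair of genuine leaves that have not yet been consumed by an earlier gluing, and that goodness of the leaves used at each step is preserved throughout the sequential assembly. Once this is set up correctly, the block computations and both implications are routine given Corollary~\ref{cor:gt:H}, Theorem~\ref{thm:gt:star}, and Lemma~\ref{lem:gt:cut}.
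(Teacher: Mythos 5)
Your proposal is correct and follows essentially the same route as the paper: both reduce the problem to trees whose degree-$\ge 3$ vertices form an independent set, realize the stars via Theorem~\ref{thm:gt:star} and glue at degree-two vertices using Lemma~\ref{lem:gt:cut} while maintaining the invariant that every vertex of every leaf $\gamma$-set has an external private neighbour. The only difference is organizational --- the paper runs an induction on the number of degree-two vertices, splitting at one such vertex per step, rather than laying out your explicit star/edge block partition and assembling along the block tree --- and this does not change the mathematical content.
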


\begin{proof}
Necessity follows by Corollary~\ref{cor:gt:H}; it remains to show sufficiency.  The result trivially holds for $K_1$ as $K_1(\gamma) \cong K_1$; so suppose $G$ has at least two vertices.  We proceed by induction on $k$, the number of degree two vertices of $G$.  If $k = 0$, then $G \cong K_{1, n - 1}$, and by Theorem~\ref{thm:gt:star}, $K_{1, n - 1}$ is the $\gamma$-graph of $Y_{n - 1}$, and in every $\gamma$-set which is a leaf of $K_{1, n - 1}$, every vertex has an external private neighbour.

Assume $k > 0$ and that every tree $F$ not containing $H$ as a subtree with fewer than $k$ degree two vertices is the $\gamma$-graph of some tree and in every $\gamma$-set which is a leaf of $F$, every vertex has an $F$-external private neighbour.  Let $x$ be a degree two (cut-)vertex in $G$, and let $G_1$ and $G_2$ be the two components of $G - \{x\}$.  As $G_1 \cup \{x\}$ and $G_2 \cup \{x\}$ each have fewer than $k$ degree two vertices, then by the induction hypothesis, each is the $\gamma$-graph of a tree (say $T_1$ and $T_2$ respectively) and in every $\gamma$-set which is a leaf of $T_1$ and $T_2$ repsectively, every vertex has an external private neighbour.  By Lemma~\ref{lem:gt:cut}, there are vertices $v_1\in V(T_1)$ and $v_2\in V(T_2)$  so that the $\gamma$-graph of $T_1 \oplus^{v_1}_{v_2} T_2$ is $G$ and by construction, in every $\gamma$-set which is a leaf of $G$, every vertex has an external private neighbour.  The result follows.
\end{proof}


\section{Properties of $\gamma$-Graphs of Trees}
\label{sec:tree-gamma}

In this section, we investigate  general properties of $\gamma$-graphs of trees.  We begin with a result classifying the edges in $\gamma$-graphs of trees, highlighting the importance of cut edges and 4-cycles in these graphs.   

\begin{Lemma}\label{4cycle or cut edge}
Let $T$ be a tree and let $T(\gamma)$ be the $\gamma$-graph of $T$.  Every edge  of $T(\gamma)$ is a cut-edge or is contained in a 4-cycle.
\end{Lemma}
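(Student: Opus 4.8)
The plan is to fix the edge $e = D_1 D_2$ of $T(\gamma)$, write $D_2 = (D_1 - \{u\}) \cup \{v\}$ with $uv \in E(T)$, and cut the tree at $uv$ into the two subtrees $T_u \ni u$ and $T_v \ni v$. Writing the common part $D_1 \cap D_2$ as $A \cup B$ with $A \subseteq V(T_u)$ and $B \subseteq V(T_v)$, we have $D_1 = A \cup \{u\} \cup B$ and $D_2 = A \cup \{v\} \cup B$. Since $uv$ is the only edge joining $T_u$ to $T_v$, the two sides interact only through $u$ and $v$, and the governing dichotomy is that either one subtree admits a swap that can be performed simultaneously at $D_1$ and $D_2$ (yielding a $4$-cycle through $e$), or both sides are rigid and $e$ is forced to be a cut-edge.

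First I would settle the $4$-cycle half. The crucial computation is that for $b \in B$ one has $pn(b, D_2) \subseteq pn(b, D_1)$, and symmetrically $pn(a, D_1) \subseteq pn(a, D_2)$ for $a \in A$, because replacing $u$ by $v$ changes only whether $v$ is covered from the $T_v$-side. Consequently a swap $b \to b'$ inside $T_v$ that is valid at $D_1$ is automatically valid at $D_2$; applying it to both endpoints of $e$ produces $\gamma$-sets $D_1' = (D_1 - \{b\}) \cup \{b'\}$ and $D_2' = (D_2 - \{b\}) \cup \{b'\}$ that remain adjacent via the swap $u \to v$, giving the $4$-cycle $D_1 D_2 D_2' D_1'$; the mirror statement holds for a swap inside $T_u$ valid at $D_2$. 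Conversely, using that a tree contains neither a triangle nor a $4$-cycle, I would check that these are the only $4$-cycles through $e$: any other pattern of two swaps closing a $4$-cycle would force two distinct $u$--$q$ paths in $T$ for some vertex $q$, hence a cycle in $T$. Thus $e$ lies in a $4$-cycle if and only if $D_1$ admits a swap inside $T_v$ or $D_2$ admits a swap inside $T_u$, and it remains to show that when neither swap exists (so every $b \in B$ has at least two $D_1$-external private neighbours and every $a \in A$ has at least two $D_2$-external private neighbours) the edge $e$ is a cut-edge.

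For this rigid case I would first record a clean consequence of comparing the domination numbers of $T_u$ and $T_v$ under the boundary conditions at $u$ and $v$: the minimality of $D_1$ and $D_2$ forces the relevant constrained domination numbers to differ by exactly one, from which no $\gamma$-set of $T$ contains both $u$ and $v$. This partitions the $\gamma$-sets into those meeting $\{u,v\}$ in $u$, in $v$, or in neither. I would then use the rigidity to prove that $B$ is the unique minimum set dominating $T_v - v$ and $A$ the unique minimum set dominating $T_u - u$. Granting this, every $\gamma$-set containing $u$ restricts to $B$ on $T_v$ and every $\gamma$-set containing $v$ restricts to $A$ on $T_u$, so the only swap across the edge $uv$ anywhere in $T(\gamma)$ is $e$ itself; the delicate point, which I would handle next, is that a $\gamma$-set containing neither $u$ nor $v$ cannot provide a detour reconnecting the two sides. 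Tracking the quantity $|D \cap V(T_v)|$, which changes only when a swap crosses $uv$, would then confine $D_1$ and $D_2$ to distinct components of $T(\gamma) - e$.

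The main obstacle is exactly this last block: promoting the local rigidity of $A$ and $B$ to the global uniqueness of the boundary dominators, and ruling out a "neither" $\gamma$-set that could reconnect the two sides through non-crossing swaps. I expect to establish the uniqueness with a connectivity argument in the spirit of Theorem~\ref{thm:gt:con}: the minimum sets dominating $T_v - v$ form a connected reconfiguration graph in which $B$, having no admissible swap by hypothesis, must be the only vertex. Throughout, Observation~\ref{basic} (the private-neighbour sets of the swapped vertices coincide and lie in $\{u,v\}$), Lemma~\ref{lem:gt:change} (a common vertex loses or gains at most one private neighbour across an edge), and bipartiteness (Theorem~\ref{bipartite}) are the tools I would use to keep the exchange arguments under control.
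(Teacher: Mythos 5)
Your route is genuinely different from the paper's: the paper disposes of this lemma in three lines by appealing to Algorithm~\ref{alg:gt} --- an edge that is not a cut-edge lies on a cycle, and the only edges of $T(\gamma)$ that create cycles are those added at line~\ref{alg:gt:cycle}, each of which completes a $4$-cycle, so the cycle space is generated by $4$-cycles and any edge on a cycle lies on one of them. Your first half is correct and self-contained: the containments $pn(b,D_2)\subseteq pn(b,D_1)$ for $b\in B$ and $pn(a,D_1)\subseteq pn(a,D_2)$ for $a\in A$ do hold (the only vertex of $T_v$ that $u$ can dominate is $v$, which $v$ dominates itself), a compatible swap on either side closes a $4$-cycle through $e$, and Lemma~\ref{prop:gt:unique} rules out the one degenerate case $b'=v$ (resp.\ $a'=u$) that you would otherwise need to address; together with Proposition~\ref{prop:4cycle} this correctly characterizes when $e$ lies on a $4$-cycle.

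The gap is in the rigid case, and it sits exactly where you flagged it. Your invariant $|D\cap V(T_v)|$ only shows that every $D_1$--$D_2$ walk uses \emph{some} edge realized by the swap $u\leftrightarrow v$; to conclude that $e$ is a bridge you must show that $e$ is the \emph{only} such edge, i.e.\ that every $\gamma$-set $D$ with $u\in D$ for which $(D-\{u\})\cup\{v\}$ is also a $\gamma$-set equals $D_1$. You reduce this to the uniqueness of $B$ among minimum sets dominating $V(T_v)-\{v\}$ (and dually for $A$), which you propose to obtain from ``a connectivity argument in the spirit of Theorem~\ref{thm:gt:con}.'' But Theorem~\ref{thm:gt:con} concerns reconfiguration of minimum dominating sets of a tree, not of minimum sets dominating a tree with one vertex exempted; that modified reconfiguration graph is a different object, its connectivity is nowhere established in the paper, and you would additionally have to check that ``every $b\in B$ has two $D_1$-external private neighbours in $T$'' translates into ``$B$ has no admissible move'' in the modified graph, and that no minimum exempted dominator of $T_v$ contains $v$ (your ``no $\gamma$-set contains both $u$ and $v$'' step, itself only sketched). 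Until this uniqueness is actually proved --- for instance by an argument in the style of Lemma~\ref{unique}, decomposing into components and applying Theorem~\ref{thm:gt:con} to each --- the cut-edge half of the lemma is not established.
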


\begin{proof}
Let $e$ be an edge which is not a cut-edge of $T(\gamma)$.  Then $e$ is contained in a cycle.  But cycles in Algorithm~\ref{alg:gt} are only created at line~\ref{alg:gt:cycle}, which always creates a 4-cycle.  The result follows.
\end{proof}

 We now show that a cut-edge, not incident with a leaf allows you to decompose a $\gamma$-graph of a tree into two smaller graphs, both of which are also $\gamma$-graphs of trees. 

\begin{Lemma}\label{cutedge}
Let $G$ be a $\gamma$-graph of a tree $T$ with a cut-edge $e$, and let $G_1$ and $G_2$ be the components of $G \setminus e$.  Then $G_1 + e$ and $G_2 + e$ are each $\gamma$-graphs of trees.
\end{Lemma}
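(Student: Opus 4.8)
The plan is to work directly with the swap encoded by the cut edge. Write $e = D_1 D_2$ with $D_2 = (D_1 \setminus \{u\}) \cup \{v\}$ and $uv \in E(T)$, where $D_1 \in V(G_1)$ and $D_2 \in V(G_2)$; note that in $G_2 + e$ the set $D_1$ is attached to $D_2$ as a pendant leaf, and symmetrically for $G_1 + e$, so it suffices to exhibit a tree whose $\gamma$-graph is $G_2 + e$. By Observation~\ref{basic} we have $pn(u, D_1) = pn(v, D_2) \subseteq \{u, v\}$, and the construction will ultimately split according to which of $u, v$ lie in this set and whether $u$ is a leaf of $T$. The first move is to delete $uv$ from $T$, splitting it into the subtree $T^u$ containing $u$ and the subtree $T^v$ containing $v$; each $\gamma$-set $D$ of $T$ restricts to a dominating contribution on each side, and I want to understand how these contributions vary across the component $G_2$.

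The key step is a \emph{rigidity} claim. Since $e$ is a cut edge it lies in no cycle, hence in no $4$-cycle; by the proof of Lemma~\ref{4cycle or cut edge} the $4$-cycles of $T(\gamma)$ are exactly the commuting independent swaps, so the swap $u \to v$ commutes with no other available swap. I would convert this into the statement that every $\gamma$-set in $G_2$ contains $v$ and induces on $T^u$ the same domination as $D_2$ does; that is, the $T^u$-contribution is rigid throughout $G_2$, being the unique way a member of $G_2$ can dominate $T^u$ (the precise formulation, e.g.\ whether $D_2\cap V(T^u)$ is the unique $\gamma$-set of $T^u$ or a unique extension dominating $T^u\setminus\{u\}$, depends on the subcase for $pn(v,D_2)$). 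Indeed, any deviation on the far subtree would supply a second swap independent of $u\to v$, producing a $4$-cycle through $e$ and contradicting that $e$ is a cut edge. This identifies $G_2$ with the $\gamma$-graph of a modification of $T^v$.

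Granting rigidity, the remaining step is constructive. To realise $G_2 + e$ I would keep the subtree $T^v$ and replace $T^u$ by a minimal gadget (a short path) attached at $v$ whose only effect is to reproduce the single forced swap as the pendant $D_1$ and to introduce no further $\gamma$-sets; the exact gadget is read off from the subcase analysis of $pn(v, D_2)$, and in the degenerate case $|V(G_2)| = 1$ one simply has $G_2 + e \cong K_2$, which is a $\gamma$-graph of a tree by Theorem~\ref{trees} (being $H$-free). Correctness of the constructed tree can be verified either by running GAMMATREE on it and matching the output against the rigidity description of $G_2$, or more cleanly by assembling the tree via the gluing Lemma~\ref{lem:gt:cut} together with the building block of Theorem~\ref{thm:gt:star}, exactly as in the proof of Theorem~\ref{trees}; the external-private-neighbour hypothesis required to apply Lemma~\ref{lem:gt:cut} is precisely what the pendant vertex $v$ (with its unique private neighbour at the gadget) provides. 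The symmetric construction, keeping $T^u$ and replacing $T^v$, handles $G_1 + e$.

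The step I expect to be the main obstacle is the rigidity claim: translating the purely graph-theoretic fact that $e$ lies in no $4$-cycle of $T(\gamma)$ into the domination-theoretic assertion that the far subtree is dominated rigidly across an entire component. This requires controlling the private neighbours simultaneously along a whole path of swaps within $G_2$, and it is exactly here that the tree structure of $T$ must be invoked to rule out the $3$- and $4$-cycles that would otherwise arise, in the spirit of Lemmas~\ref{lem:gt:change} and~\ref{lem:gt:stem}. Once rigidity is established, the gadget construction and its verification are routine, following the pattern already set by Theorems~\ref{thm:gt:leaf} and~\ref{trees}.
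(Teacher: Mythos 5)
Your overall strategy is genuinely different from the paper's. The paper never cuts the tree: writing $e = AB$ with $A = (B - \{y\}) \cup \{z\}$, it lets $\{y\}\cup\{z_i\}$ be the vertices that can be swapped out of $B$, observes that since $e$ is a cut-edge each $z_i$ has exactly one $B$-external private neighbour, and forms $T'$ from $T$ by attaching a new pendant leaf to each $z_i$. This is a purely local modification at $B$: the new leaves give each $z_i$ a second external private neighbour, so every swap from $B$ except the one along $e$ is destroyed, $B$ becomes a pendant vertex of $T'(\gamma)$ attached to $A$, and the $\gamma$-sets of $G_1$ are untouched. You instead propose to delete the edge $uv$ of $T$, prove that the far subtree $T^u$ is dominated ``rigidly'' throughout the component $G_2$, and then replace $T^u$ by a gadget.

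The problem is that the rigidity claim is the entire content of your argument and it is not proved; the justification you offer does not work beyond the first step. You argue that ``any deviation on the far subtree would supply a second swap independent of $u\to v$, producing a $4$-cycle through $e$.'' But a $4$-cycle arises from two independent swaps available \emph{at the same $\gamma$-set}. If $D$ is a $\gamma$-set deep in $G_2$ whose restriction to $T^u$ differs from that of $D_2$, the offending swap is available at some predecessor $D'$ of $D$ on a path from $D_2$, not at $D_1$ or $D_2$, so the resulting $4$-cycle (if any) need not contain $e$ and no contradiction with $e$ being a cut-edge follows. In particular you must rule out a second edge of $T(\gamma)$ realizing the same swap $u\leftrightarrow v$ with both endpoints inside $G_2$ (which would immediately give a $\gamma$-set of $G_2$ not containing $v$), and this requires an induction along paths in $G_2$ with careful bookkeeping of private neighbours that you have not supplied. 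The remaining steps also lean on this gap: the gadget is ``read off from the subcase analysis'' without being exhibited, and the appeal to Lemma~\ref{lem:gt:cut} presupposes that the $T^v$-side is already known to be the $\gamma$-graph of a tree with the required external-private-neighbour property at its leaf $\gamma$-set, which is essentially the statement being proved. By contrast, the paper's pendant-leaf construction needs only the local fact that each $z_i$ has exactly one $B$-external private neighbour, which follows directly from $e$ being a cut-edge; I would encourage you to adopt that route, or else to state and prove your rigidity claim as a separate lemma before using it.
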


\begin{proof}
Let $e = AB$ with $A \in V(G_1)$, $B \in V(G_2)$ and $A = (B - \{y\}) \cup \{z\}$ for some vertices $y, z \in V(T)$.  Let $\{z\} \cup \{z_i\}$ be the set of vertices that can be swapped from $B$.  Form $T'$ by adding a leaf to each $z_i$.  Since $e$ is a cut-edge, $z_i$ cannot be swapped in $A$, and hence each $z_i$ has exactly one external private neighbour in $B$.  But $B$ is also a $\gamma$-set of $T'$, and each $z_i$ has two external private neighbours.  Moreover, each $\gamma$-set in $G_1$ is also a $\gamma$-set of $T'$, so it follows that $G_1 + e$ is the $\gamma$-graph of $T'$.  By symmetry, $G_2 + e$ is also the $\gamma$-graph of a tree.
\end{proof}

\begin{figure}[htbp]
\begin{center}
\begin{tikzpicture}
		\node [style=vertex] (0) at (-2, 2) {};
		\node [style=vertex] (1) at (-4, 2) {};
		\node [style=vertex] (2) at (-4, 0) {};
		\node [style=vertex] (3) at (-2, 0) {};
		\node [style=vertex] (4) at (-2, -2) {};
		\node [style=vertex] (5) at (-4, -2) {};
		\node [style=vertex] (6) at (0, 0) {};
		\node [style=vertex] (7) at (0, -2) {};
		\node [style=vertex] (8) at (2, 0) {};
		\node [style=vertex] (9) at (2, 2) {};
		\node [style=vertex] (10) at (4, 0) {};
		\node [style=vertex] (11) at (2, -2) {};
		\node  at (-1,.2) {$e$};
		\draw (1) to (2);
		\draw (1) to (0);
		\draw (0) to (3);
		\draw (2) to (3);
		\draw (2) to (5);
		\draw (5) to (4);
		\draw (4) to (3);
		\draw (3) to (6);
		\draw (6) to (7);
		\draw (6) to (8);
		\draw (8) to (9);
		\draw (8) to (11);
		\draw (8) to (10);
\end{tikzpicture}
\caption{The graph $Z$}\label{example}
\end{center}
\end{figure}
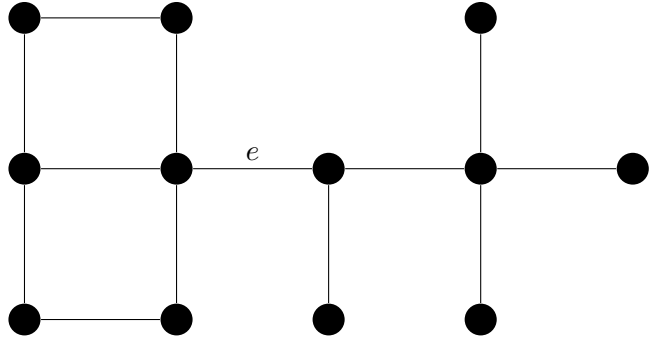

Setting $G=H$ (see Figure~\ref{fig:H}) and $e$ the edge joining the two stems of $H$, shows the converse of Lemma~\ref{cutedge} does not necessarily hold. That is both $G_1 + e=K_{1,3}$ and $G_2 + e=K_{1,3}$ are each $\gamma$-graphs of trees, but $G=H$ is not a $\gamma$-graph of any tree. Lemma~\ref{cutedge} does provide a tool for showing a graph with a cut-edge not incident to a leaf is not the $\gamma$-graph of any tree $T$.  For example, consider the graph $Z$ shown in Figure~\ref{example}.  Let $Z_1$ and $Z_2$ be the components of $Z$ obtained by deleting the edge $e$, where $Z_2$ is a tree.  Then $Z_2 +e$ is a tree with adjacent vertices of degree at least 3 and thus by Theorem~\ref{trees},  $Z_2 +e$ is not the  $\gamma$-graph of any tree $T$.  It now follows from Lemma~\ref{cutedge} that $Z$ is not the $\gamma$-graph of any tree $T$.

The remainder of the paper will focus on $\gamma$-graphs of trees where every edge is in a 4-cycle or incident with a leaf.  We first present the following useful result due to Edwards, MacGillivray, and Nasserasr~\cite{E15}, which will be used throughout.

\begin{Lemma} \cite{E15} \label{prop:gt:unique}
For a $\gamma$-set $D$ of a tree $T$ and a vertex $z \notin D$, there is at most one vertex $v \in D$ such that $(D - \{v\})\cup \{z\}$ is also a $\gamma$-set of $T$.
\end{Lemma}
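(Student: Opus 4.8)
The plan is to recast the condition ``$(D-\{v\})\cup\{z\}$ is a $\gamma$-set'' purely in terms of private neighbours, and then rule out two simultaneous candidates using the tree structure. Since $(D-\{v\})\cup\{z\}$ has cardinality $\gamma(T)$, it is a $\gamma$-set exactly when it dominates $T$; deleting $v$ leaves undominated precisely the vertices of $pn(v,D)$, so the condition is equivalent to $pn(v,D)\subseteq N[z]$. I would therefore suppose, for contradiction, that two distinct vertices $v_1,v_2\in D$ both satisfy $pn(v_i,D)\subseteq N[z]$, and aim to contradict minimality of $D$ or, more cleanly, the fact that every vertex of a $\gamma$-set has a private neighbour.

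First I would extract the relevant geometric data. As $D$ is a $\gamma$-set, each $v_i$ has a private neighbour $a_i\in pn(v_i,D)$, and then $a_i\in N[v_i]\cap N[z]$ while $a_i\notin N[v_{3-i}]$ (a private neighbour of $v_i$ is dominated by no other member of $D$); moreover $a_1\ne a_2$, and in particular $d(v_i,z)\le 2$. The key structural step is the claim that $z$ lies on the $v_1$--$v_2$ path in $T$: the neighbour of $z$ along the path toward $v_i$ is $v_i$ when $d(v_i,z)=1$ and is $a_i$ when $d(v_i,z)=2$, and the relations $v_1\ne v_2$, $a_i\ne v_{3-i}$, $a_1\ne a_2$ force these two neighbours of $z$ to be distinct, so the two paths leave $z$ along different edges.

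For the finish I would pass to the $\gamma$-set $D_1=(D-\{v_1\})\cup\{z\}$, which contains $v_2$, and examine $pn(v_2,D_1)$. Any $w\in pn(v_2,D_1)$ satisfies $w\in N[v_2]$, $w\notin N[z]$, and $w\notin N[D-\{v_1,v_2\}]$; the last two conditions together with $pn(v_2,D)\subseteq N[z]$ force $w\in N[v_1]\cap N[v_2]$. But once $z$ is known to lie on the $v_1$--$v_2$ path, any such common neighbour (or endpoint coincidence) of $v_1$ and $v_2$ must coincide with $z$, so $w\in N[z]$, contradicting $w\notin N[z]$. Hence $pn(v_2,D_1)=\emptyset$, contradicting that $v_2$ belongs to the $\gamma$-set $D_1$. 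I expect the main obstacle to be the path claim: the statement offers no cycle to exploit directly (three vertices never bound a cycle in a tree), so the entire argument hinges on reorganising the hypotheses into the assertion that $z$ separates $v_1$ from $v_2$, after which the disappearance of $v_2$'s private neighbour in $D_1$ closes the proof.
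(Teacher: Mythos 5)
Your argument is correct, but note that the paper does not prove this lemma at all: it is imported verbatim from Edwards, MacGillivray and Nasserasr \cite{E15}, so there is no in-paper proof to compare against. Taken on its own terms, your proof is sound and self-contained. The reformulation ``$(D-\{v\})\cup\{z\}$ is a $\gamma$-set iff $pn(v,D)\subseteq N[z]$'' is exactly right (the set has the correct cardinality, and deleting $v$ un-dominates precisely $pn(v,D)$). The separation claim also checks out: choosing private neighbours $a_i\in pn(v_i,D)$ gives $a_i\in N[v_i]\cap N[z]$, $a_i\notin N[v_{3-i}]$ and $a_1\neq a_2$, and a short case analysis on $d(v_i,z)\in\{1,2\}$ shows the two paths from $z$ to $v_1$ and to $v_2$ leave $z$ by different edges, so $z$ is an internal vertex of the $v_1$--$v_2$ path. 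The endgame is the nicest part: any $w\in pn(v_2,D_1)$ with $D_1=(D-\{v_1\})\cup\{z\}$ satisfies $w\in N[v_2]$, $w\notin N[z]$, $w\notin N[D-\{v_1,v_2\}]$, and since $pn(v_2,D)\subseteq N[z]$ this forces $w\in N[v_1]\cap N[v_2]$; in a tree with $z$ strictly between $v_1$ and $v_2$ that forces $w=z$ (or an impossible adjacency $v_1\sim v_2$), contradicting $w\notin N[z]$. Hence $pn(v_2,D_1)=\emptyset$, contradicting minimality of the $\gamma$-set $D_1$ --- precisely the fact, stated in the introduction of the paper, that every vertex of a $\gamma$-set has a private neighbour. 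This is a legitimate alternative to the usual cycle-forcing arguments used elsewhere in the paper (e.g.\ in Lemma~\ref{lem:gt:change} and Theorem~\ref{thm:noK23}): instead of exhibiting a cycle in $T$, you exhibit a vertex of a minimum dominating set with no private neighbour, which is arguably cleaner since, as you observe, three vertices alone never bound a cycle in a tree.
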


\subsection{Cartesian Product and $\gamma$-Graphs of Trees}

Every edge in the Cartesian product of two connected graphs of order at least two is in a 4-cycle.  Theorem~\ref{thm:tree-wo} highlights one aspect of the connection between $\gamma$-graphs and the Cartesian product.  When movement of the dominating set in one part of a graph has no effect on movement of the dominating set in another part of the graph, the Cartesian product frequently arises. In this section we exploit this connection and establish that the  Cartesian product  can both decompose $\gamma$-graphs of trees into smaller $\gamma$-graphs of trees and be used to combine $\gamma$-graphs of trees to discover other $\gamma$-graphs of trees.  We first consider {\em Cartesian product graphs}, the set of graphs which can be obtained from the Cartesian product of graphs of order at least two.  Sabidussi~\cite{S60} and Vizing~\cite{V63} independently demonstrated a prime factorization of graphs, analogous to the Fundamental Theorem of Arithmetic.

\begin{theorem} \cite{S60, V63}
All finite connected graphs have a unique prime factorization with respect to Cartesian multiplication.
\end{theorem}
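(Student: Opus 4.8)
The plan is to regard the finite connected graphs, taken up to isomorphism, as a commutative monoid under $\bbox$ with identity $K_1$. Associativity, commutativity, and the identity law are immediate from the definition of the Cartesian product, and a graph is \emph{prime} exactly when it is not expressible as $A \bbox B$ with $|V(A)|, |V(B)| \ge 2$. Existence of a prime factorization is then a routine induction on $|V(G)|$: if $G$ is prime we are done, and otherwise $G \cong A \bbox B$ with both factors of order at least two, so $|V(A)|, |V(B)| < |V(G)|$ and we recurse. The whole content of the theorem is uniqueness.

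For uniqueness the idea is to exhibit a partition of $E(G)$ that is defined \emph{intrinsically}, without reference to any chosen factorization, and to show its classes are in bijection with the prime factors. The tool is the Djokovi\'c--Winkler relation $\Theta$ on edges: declare $xy \mathbin{\Theta} uv$ whenever
\[
d(x,u) + d(y,v) \neq d(x,v) + d(y,u),
\]
and let $\Theta^*$ be its transitive closure. Since $\Theta^*$ is built purely from the distance function of $G$, it is an isomorphism invariant and is independent of how $G$ is written as a product.

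The key lemma to establish is that if $G \cong G_1 \bbox \cdots \bbox G_k$ with each $G_i$ prime, then $\Theta^*$ partitions $E(G)$ into exactly $k$ classes, the $i$-th class consisting of those edges lying in the $G_i$-direction (changing only the $i$-th coordinate). This splits into two containments. First, an edge in the $G_i$-direction is never directly $\Theta$-related to one in the $G_j$-direction for $i \ne j$; this follows from the product distance formula $d(a,b) = \sum_t d_{G_t}(a_t, b_t)$, which makes the defining inequality fail across directions, so every $\Theta^*$-class lies within a single direction. Second, any two edges sharing a direction are joined by a $\Theta^*$-chain, so each direction is a single class. Granting the lemma, each prime factor is recovered from its $\Theta^*$-class as the quotient obtained by contracting all edges outside that class; since the class is intrinsic, so is the recovered factor. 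Hence two prime factorizations induce the same $\Theta^*$-partition, forcing the same number of factors and the same factors up to isomorphism.

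The main obstacle is the second containment, namely that $\Theta^*$ does not subdivide a single prime factor into several classes. Same-direction edges of a factor need not be directly $\Theta$-related, so the relation must be propagated through transitivity, and this is precisely where primeness enters: one shows that the $\Theta^*$-classes of a connected graph always induce a genuine product decomposition, and a class refinement of the edge set of $G_i$ would therefore read off a nontrivial factorization of $G_i$, contradicting that $G_i$ is prime. Making this structural fact precise---that a connected graph whose edges form a single $\Theta^*$-class is exactly a prime, and that the $\Theta^*$-classes reconstruct the factor layers---is the heart of the Sabidussi--Vizing argument and the step demanding the most care.
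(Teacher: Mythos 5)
First, a point of comparison: the paper does not prove this statement at all --- it is quoted as a classical theorem of Sabidussi and Vizing --- so there is no internal argument to measure yours against, and I can only assess your sketch on its own terms. Your overall strategy (exhibit an edge partition defined intrinsically from the metric, show it is refined by the direction partition of every factorization, and read the prime factors off its classes) is the right one, and is essentially the modern route to this theorem rather than Sabidussi's original tower-of-congruences argument. Your first containment is also correct: since $d(a,b)=\sum_t d_{G_t}(a_t,b_t)$ in a Cartesian product, two edges lying in different directions are never $\Theta$-related, so every $\Theta^*$-class sits inside a single direction.

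The gap is that your key lemma is false as stated: the transitive closure $\Theta^*$ of the Djokovi\'c--Winkler relation alone does \emph{not} have exactly one class per prime factor. Take $P_3$ with edges $ab$ and $bc$: then $d(a,b)+d(b,c)=2=d(a,c)+d(b,b)$, so the two edges are not $\Theta$-related, and there are no other edges to chain through; hence $P_3$, which is prime, has two $\Theta^*$-classes. (More generally, in any tree every edge is its own $\Theta^*$-class.) So the ``second containment'' you flag as the delicate step does not merely demand care --- it fails, and with it the claim that the classes are in bijection with the prime factors. The standard repair is to enlarge the relation: let $\tau$ relate two incident edges whenever they lie on no common chordless $4$-cycle, and work with $(\Theta\cup\tau)^*$. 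Incident edges in different directions of a product always span an induced square (the product of the two edges), so $(\Theta\cup\tau)^*$ is still refined by every direction partition; the real work is then the ``square lemma'' showing that the classes of $(\Theta\cup\tau)^*$ themselves induce a product decomposition, whence they give the finest such decomposition and uniqueness follows. Separately, the factors are recovered as the connected components (layers) of each class, not by contracting the complementary edges, though that is a presentational rather than a substantive issue.
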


We show that if $G$ is a `composite' graph, then each of its prime factors are $\gamma$-graphs of trees if and only if $G$ is also the $\gamma$-graph of a tree.  Hence, to characterize $\gamma$-graphs of trees, we need only consider `prime' graphs under Cartesian multiplication.

\begin{theorem}
Let $G$ be a Cartesian product graph with $G = G_1 \bbox G_2$, $G_i \neq K_1$.  Then $G$ is the $\gamma$-graph of a tree $T$ if and only if each $G_i$ is the $\gamma$-graph of a tree $T_i$.
\end{theorem}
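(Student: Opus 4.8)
The plan is to prove both directions by exploiting the multiplicative structure, with the Cartesian product decomposition theorem (Theorem~\ref{thm:tree-wo}) doing much of the heavy lifting for sufficiency.

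\textbf{Sufficiency.} Suppose each $G_i$ is the $\gamma$-graph of a tree $T_i$. First I would handle the base case where each $G_i$ is itself prime and build up. The key structural idea is to engineer a tree $T$ in which there is a vertex $x$ appearing in no $\gamma$-set, whose removal separates $T$ into pieces realizing the factors, so that Theorem~\ref{thm:tree-wo} produces the Cartesian product. More precisely, I would attempt to construct $T$ from $T_1$ and $T_2$ by attaching them (possibly via an intermediate vertex forced out of all $\gamma$-sets) so that movements of the dominating set within the $T_1$-part are independent of movements within the $T_2$-part. Independence is exactly the condition under which the $\gamma$-graph becomes a Cartesian product: if $D_1$ and $D_2$ are $\gamma$-sets restricted to the two regions, then a swap in one region does not affect whether a swap is available in the other, so $T(\gamma) \cong T_1(\gamma) \bbox T_2(\gamma) = G_1 \bbox G_2 = G$. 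Theorem~\ref{thm:tree-wo} gives the precise bookkeeping for the subtracted term, and I would need to arrange the attachment so that the forbidden term $T_1^{x_1}(\gamma) \bbox T_2^{x_2}(\gamma)$ is empty (i.e.\ every $\gamma$-set of each $T_i$ uses the attachment vertex, or the construction avoids that vertex entirely), leaving the full product.

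\textbf{Necessity.} Suppose $G = G_1 \bbox G_2$ is the $\gamma$-graph of a tree $T$. Here the task is to recover trees $T_1$ and $T_2$ with $T_i(\gamma) \cong G_i$. The approach is to fix a ``base'' copy of $G_1$ inside $G = G_1 \bbox G_2$, namely $G_1 \bbox \{b\}$ for some vertex $b \in V(G_2)$, and argue that this copy, being an isometric/convex subgraph arising from the product structure, corresponds to a collection of $\gamma$-sets of $T$ that all agree on one part of $T$ and vary on another. The cut-edge and $4$-cycle dichotomy (Lemma~\ref{4cycle or cut edge}) together with Lemma~\ref{cutedge} is the natural tool: every edge of a product factor lies in a $4$-cycle, so these edges are not cut-edges, and I would use the convexity of the product layers to peel off one factor. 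Concretely, I expect to iterate Theorem~\ref{thm:gt:leaf} and Lemma~\ref{cutedge}, or else directly exhibit a vertex of $T$ whose deletion realizes the factorization via Theorem~\ref{thm:tree-wo} read in reverse, identifying the part of $T$ responsible for the $G_1$-coordinate versus the $G_2$-coordinate.

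\textbf{Main obstacle.} The hard part will be the necessity direction: given only that $G(\gamma)$-structure factors as a Cartesian product at the graph level, one must locate the corresponding structural decomposition inside the tree $T$ itself. The product structure of $T(\gamma)$ does not transparently dictate where in $T$ the ``independent regions'' lie, and a priori $T$ need not be built as a link of two smaller trees. I would lean on Theorem~\ref{highest} (uniqueness of the highest $\gamma$-set) to pin down a canonical $\gamma$-set as a reference point, then track which vertices of $T$ are changed as one moves along $G_1$-edges versus $G_2$-edges from this reference; the claim I must establish is that these two sets of ``mobile'' vertices occupy disjoint, non-interacting portions of $T$, which is precisely what forces $T$ to decompose and what makes Theorem~\ref{thm:tree-wo} applicable.
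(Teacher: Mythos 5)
Your sufficiency sketch is broadly in the spirit of the paper (the paper simply joins a stem of $T_1$ to a stem of $T_2$ by an edge and declares the verification easy; your variant via a linking vertex and Theorem~\ref{thm:tree-wo} could be made to work along the lines of Lemma~\ref{lem:gt:cut}). The necessity direction, however, has a genuine gap, and the tools you name cannot close it. Since $G_1$ and $G_2$ are connected with at least two vertices each, every edge of $G = G_1 \bbox G_2$ lies in a $4$-cycle and $G$ has minimum degree at least $2$; hence $G$ has no cut-edges and no leaves, so neither Lemma~\ref{cutedge} nor iterating Theorem~\ref{thm:gt:leaf} applies. Likewise, reading Theorem~\ref{thm:tree-wo} ``in reverse'' presupposes a vertex of $T$ lying in no $\gamma$-set, which need not exist (the paper points out such trees explicitly), and even when it exists the induced factorization need not match the given one. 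The crucial claim --- that the vertices of $T$ swapped along $G_1$-edges and those swapped along $G_2$-edges are disjoint, and that all edges of $G$ corresponding to a fixed factor edge perform the \emph{same} swap --- is exactly what you flag as the thing to be established, but you do not establish it. The paper proves it from Proposition~\ref{prop:4cycle}: each $4$-cycle $WXYZ$ decomposes into two independent swaps on distinct vertices, and propagating this along a ``ladder'' $e \bbox P$ (where $P$ is a path in the $G_2$-layer joining two copies of a $G_1$-edge $e$) shows all copies of $e$ make the identical swap, whence a vertex moved in the $G_2$-direction is touched by no $G_1$-swap.

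You are also missing the final constructive step. Even once the decoupling is known, $T$ itself need not split into two subtrees realizing the factors, so one cannot simply ``peel off'' a factor. The paper instead manufactures $T_1$ by taking $T$ and attaching two new leaves to every vertex of a fixed $\gamma$-set $(W,X)$ that is swapped in the $G_2$-direction; this forces those vertices into every $\gamma$-set with two external private neighbours, freezing the $G_2$-coordinate and yielding $T_1(\gamma) \cong G_1$. Without this (or an equivalent) device, your plan does not produce the trees $T_1$ and $T_2$ whose existence the theorem asserts.
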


\begin{proof}
Suppose $G = G_1 \bbox G_2$, $G_1, G_2 \neq K_1$, is the $\gamma$-graph of a tree $T$.  Let $e = U_1 U_2$ be an edge of $G_1$ and let $e_1$ and $e_2$ be any two corresponding edges in $G$ on vertices $V_1$ and $V_2$ of $G_2$ respectively.  As $G$ is connected by Theorem~\ref{thm:gt:con}, there is a path between $(U_1, V_1)$ and $(U_1, V_2)$ in $G$, and considering only the edges in $G_2$ gives a path $P$ between $V_1$ and $V_2$.  Consider the subgraph $e \bbox P$ of $G$.  It follows from Proposition~\ref{prop:4cycle} that every edge corresponding to $e$ makes the same swap between its two $\gamma$-sets.  By symmetry, this is true of every edge in $G_1$ or $G_2$.  Let $(W, X)$ be a vertex of $G$.  If a vertex $a \in (W, X)$ is swapped by an edge of $G_2$, it is involved in no swap in $G_1$ by Proposition~\ref{prop:4cycle}.  Hence, form $T_1$ by adding two leaves to any vertex in $(W, X)$ that is swapped in $G_2$.  Then clearly, $T_1(\gamma) \cong G_1$.  By symmetry, $G_2$ is also the $\gamma$-graph of a tree.

Conversely, suppose each $G_i$ is the $\gamma$-graph of a tree $T_i$.  Let $T$ be the graph formed by adding an edge between any stem of $T_1$ and any stem of $T_{2}$.  It is easily verified that $T(\gamma) \cong G$.
\end{proof}

\subsection{Incident 4-cycles in $\gamma$-Graphs of Trees}

In this subsection we give some structure to how four cycles interact locally in $\gamma$-graphs of a trees. We first demonstrate that $K_{2, 3}$ cannot be a subgraph of a $\gamma$-graph of a tree.  As a consequence, we observe that two 4-cycles have at most two common vertices, and moreover, if two 4-cycles have two common vertices, then the two common vertices are adjacent.

\begin{theorem} \label{thm:noK23}
Let $T$ be a tree and let $T(\gamma)$ be the $\gamma$-graph of $T$.  Then $T(\gamma)$ does not contain $K_{2, 3}$ as a subgraph.
\end{theorem}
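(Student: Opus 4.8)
The plan is to assume $T(\gamma)$ contains $K_{2,3}$ as a subgraph, with bipartition classes $\{A,B\}$ and $\{P,Q,R\}$ (six distinct $\gamma$-sets of $T$), and to derive a contradiction purely from the local structure of the swaps, using Lemma~\ref{prop:gt:unique} together with the fact that a tree contains no $4$-cycle. First I would record the swaps out of $A$: write $P=(A-\{a_1\})\cup\{p\}$, $Q=(A-\{a_2\})\cup\{q\}$, $R=(A-\{a_3\})\cup\{r\}$ with $p,q,r\notin A$, and similarly the swaps out of $B$. Since $p,q,r$ are the vertices inserted into $A$ along three distinct edges of $T(\gamma)$, Lemma~\ref{prop:gt:unique} forces them to be pairwise distinct: were two equal, the common inserted vertex would have two swap-partners in $A$ and hence produce the same $\gamma$-set twice. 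The same argument makes the three vertices inserted into $B$ pairwise distinct.

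Next I would observe that $|A|=|B|=\gamma(T)$ gives $|A\setminus B|=|B\setminus A|$, so $|A\triangle B|$ is even, while the length-two path through $P$ yields $|A\triangle B|\le |A\triangle P|+|P\triangle B|=4$. Hence $|A\triangle B|\in\{2,4\}$, and I would treat the two cases separately. In the case $|A\triangle B|=4$, each path $A$–$X$–$B$ (for $X\in\{P,Q,R\}$) satisfies $(A\triangle X)\triangle(X\triangle B)=A\triangle B$, with both symmetric differences of size two; size four forces the two-element sets $A\triangle X$ and $X\triangle B$ to be disjoint, so the inserted vertex ($p$, $q$, or $r$) lies in $A\triangle B$ but not in $A$, i.e.\ in $B\setminus A$. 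Thus $p,q,r$ are three distinct elements of $B\setminus A$, contradicting $|B\setminus A|=2$.

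The delicate case, and the main obstacle, is $|A\triangle B|=2$, where $B=(A-\{s\})\cup\{t\}$. Here, for each $X\in\{P,Q,R\}$, the two-element sets $A\triangle X$ and $B\triangle X$ must meet in exactly one vertex (otherwise their symmetric difference would not be the $2$-set $A\triangle B$), and I would check that only two configurations survive: either $X$ is reached from $A$ and from $B$ by deleting a \emph{common} vertex $c$ and inserting $t$ (resp.\ $s$), or by inserting a \emph{common} vertex $c$ after deleting $s$ (resp.\ $t$). In both configurations the adjacency requirement on the swapped pairs forces $c\sim s$ and $c\sim t$, so $c$ is a common neighbour of $s$ and $t$ in $T$. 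Since $T$ is a tree and so has no $4$-cycle, $s$ and $t$ have at most one common neighbour $c^{\ast}$; consequently every $X\in\{P,Q,R\}$ equals either $(A-\{c^{\ast}\})\cup\{t\}$ or $(A-\{s\})\cup\{c^{\ast}\}$. These are only two sets, so two of $P,Q,R$ coincide — the final contradiction.

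I expect the fiddliest part to be the bookkeeping in the $|A\triangle B|=2$ case that isolates the two admissible configurations, namely ruling out the two candidate shared vertices that would place a single vertex simultaneously inside and outside $X$. Once the argument is phrased through symmetric differences this reduces to routine checking, and the essential content is the observation that the tree's acyclicity pins every candidate for $P,Q,R$ onto the unique common neighbour $c^{\ast}$, collapsing three required sets into two.
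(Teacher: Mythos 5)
Your proposal is correct. It shares the paper's two main ingredients — Lemma~\ref{prop:gt:unique} to force the three vertices inserted into $A$ (resp.\ $B$) to be pairwise distinct, and the fact that the two degree-three vertices of the $K_{2,3}$ have symmetric difference at most $4$ — but the endgame is organized quite differently. The paper does not split on $|A\triangle B|$; instead it picks a specific inserted vertex $b_3\notin X_2$, traces the swap from $Y_3$ to $X_2$ to pin down a vertex $c$, deduces that two of the deleted vertices coincide, and finally exhibits a $4$-cycle $c\,b_2\,a_3\,b_3$ in $T$. Your explicit case split buys a cleaner argument in each branch: when $|A\triangle B|=4$ the contradiction is pure pigeonhole (three distinct inserted vertices forced into the $2$-element set $B\setminus A$), with no appeal to acyclicity at all; when $|A\triangle B|=2$ your classification of the shared element of $A\triangle X$ and $B\triangle X$ (correctly discarding the two configurations that would put a vertex both in and out of $X$) reduces everything to the fact that $s$ and $t$ have at most one common neighbour in a tree, leaving only two candidates for the three sets $P,Q,R$. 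A further small advantage is that you never need Theorem~\ref{bipartite}: evenness of $|A\triangle B|$ and the triangle inequality for symmetric difference suffice, and your $|A\triangle B|=2$ case covers the possibility that $A$ and $B$ are themselves adjacent. Both proofs are of comparable length; yours is arguably easier to verify because each branch has a single, clearly identified contradiction.
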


\begin{proof}
For $i\in\{1,2,3\}$ and $j\in\{1,2\}$ let $Y_i$ and $X_j$ be vertices of $T(\gamma)$ that induce $K_{2, 3}$.  That is $X_j\sim Y_i$ for each possible $i$ and $j$.  Then for each $i\in\{1,2,3\}$ there exist vertices $a_i\in X_1$ and $b_i\notin X_1$ with $a_i\sim b_i$ and $Y_i = (X_1 - \{a_i\}) \cup \{b_i\}$.  By Lemma~\ref{prop:gt:unique}, it must be the case that  $b_1$, $b_2$ and $b_3$ are all distinct.  It follows from Theorem~\ref{bipartite} $X_1$ and $X_2$ are at distance two in $T(\gamma)$.  Therefore $|X_2 \setminus X_1| \le 2$ and so for some $i$, $b_i\notin X_2 \setminus X_1$. Assume with out loss of generality $b_3\notin X_2 \setminus X_1$.  As $b_3\notin X_1$ it must be the case that $b_3 \notin X_2$.  Then for some vertex $c\notin Y_3$ with $c\sim b_3$, $X_2 = (Y_3 - \{b_3\}) \cup \{c\}$, and it follows that $X_2 = (X_1 - \{a_3\}) \cup \{c\}$ and therefore $a_3\ne c$.  As each $b_i$ is distinct, we can assume without loss of generality that $b_2 \neq c$.  Then $X_2 = (Y_2 - \{a_3, b_2\}) \cup \{a_2, c\}$.  But $d(X_2, Y_2) = 1$, so $|X_2 \setminus Y_2| = 1$, which shows it must be the case that $a_2=a_3$. Clearly $b_2$ and  $b_3$ are both adjacent to $a_3$ in $T$.  Furthermore $X_2$ is adjacent to $Y_2$ in $G$ and $X_2 = (Y_2 - \{b_2\}) \cup \{c\}$ so it must be the case that $b_2$ is adjacent to $c$ in $T$.  By definition, $c\sim b_3$ so either $ c b_2 a_3 b_3$ forms a cycle in $T$ or $c=a_3$.  Both cases lead to a contradiction.
\end{proof}

We now demonstrate a key property of the $\gamma$-sets corresponding to a 4-cycle of $T(\gamma$) that we will make extensive use of in our next set of results.

\begin{prop} \label{prop:4cycle}
If $G$ is a $\gamma$-graph of a tree $T$ and $WXYZ$ is a 4-cycle of $G$, then for some distinct $a, b\in W$ and distinct $c,d\notin W$ with $a\sim c$ and $b\sim d$, so that $X= (W - \{a\}) \cup \{c\}$,  $Z= (W - \{b\}) \cup \{d\}$ and $Y = (W - \{a, b\}) \cup \{c, d\}= (Z - \{a\}) \cup \{c\}= (X - \{b\}) \cup \{d\}$.
\end{prop}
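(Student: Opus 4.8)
The plan is to read off the two swaps that attach $W$ to its cycle-neighbours $X$ and $Z$, show that these two swaps are genuinely independent, and then force $Y$ to be their combination. By the definition of adjacency in $T(\gamma)$, write $X = (W - \{a\}) \cup \{c\}$ and $Z = (W - \{b\}) \cup \{d\}$ with $a, b \in W$, $c, d \notin W$, $a \sim c$ and $b \sim d$; these representations are uniquely determined by the symmetric differences $W \triangle X$ and $W \triangle Z$ (which vertex lies in $W$ and which outside). It then remains to prove $a \neq b$, $c \neq d$, and $Y = (W - \{a, b\}) \cup \{c, d\}$.

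First I would dispatch $c \neq d$, which is immediate. If $c = d$, then $X \neq Z$ forces $a \neq b$, and $X = (W - \{a\}) \cup \{c\}$ and $Z = (W - \{b\}) \cup \{c\}$ would be two distinct $\gamma$-sets obtained from $W$ by inserting the same vertex $c$, contradicting Lemma~\ref{prop:gt:unique}. Hence $c \neq d$.

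The heart of the argument is establishing $a \neq b$ and then pinning down $Y$; here the acyclicity of $T$ does the work, since every deviation from the product configuration manufactures a short cycle in $T$. If $a = b$, then by Observation~\ref{spn2} we have $pn(X, c) = pn(Z, d) = \{a\}$, so the only swap removing $c$ from $X$ returns to $W$, and likewise for $d$ in $Z$; since $Y \neq W$, the swaps $X \to Y$ and $Z \to Y$ must therefore remove vertices of the common set $W - \{a\}$, and equating the two resulting descriptions of $Y$ forces a common neighbour $p \in W - \{a\}$ of $c$ and $d$. Together with $a \sim c$ and $a \sim d$ this produces a $4$-cycle in $T$, which is impossible. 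With $a \neq b$ and $c \neq d$ in hand, set $K = W - \{a, b\}$, so that $X = K \cup \{b, c\}$ and $Z = K \cup \{a, d\}$. To identify $Y$ I would track the membership of $c$ around the cycle $W \to X \to Y \to Z \to W$: since $c \in X$ but $c \notin Z$, the vertex $c$ must be swapped out on the way from $X$ to $Z$, and a short case analysis on where this happens shows the only option not creating a $4$-cycle in $T$ is that $X \to Y$ swaps $b$ for $d$, yielding $Y = K \cup \{c, d\} = (W - \{a, b\}) \cup \{c, d\}$. Equivalently, once one checks that $Y^{*} := (W - \{a, b\}) \cup \{c, d\}$ is a $\gamma$-set, it is a common neighbour of $X$ and $Z$ distinct from $W$ (as $X \triangle Y^{*} = \{b, d\}$ and $Z \triangle Y^{*} = \{a, c\}$ are edges of $T$), so Theorem~\ref{thm:noK23} rules out a third common neighbour and forces $Y = Y^{*}$.

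I expect the main obstacle to be the case analysis in this last step: controlling all the ways $Y$ could be simultaneously adjacent to $X$ and $Z$ and verifying that each configuration other than the intended one produces a triangle or $4$-cycle in $T$. The bipartiteness of $T(\gamma)$ (Theorem~\ref{bipartite}) together with Lemma~\ref{lem:gt:change}, which bounds how a private neighbourhood can change across a single edge, are the tools I would use to keep this analysis finite and to eliminate the degenerate matchings of the swapped vertices.
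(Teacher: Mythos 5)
Your main line of argument is correct and essentially the paper's own: read off the two swaps, get $c \neq d$ from Lemma~\ref{prop:gt:unique}, then case-analyse which of the inserted vertices survive into $Y$, eliminating every configuration except $Y = (W - \{a, b\}) \cup \{c, d\}$ by exhibiting a $3$- or $4$-cycle in $T$; the paper merely defers the proof that $a \neq b$ to its final case rather than dispatching it up front via Observation~\ref{spn2} as you do, and both versions of that step come down to the same ``two common neighbours of $c$ and $d$'' contradiction. One caution about your ``equivalently'' shortcut via Theorem~\ref{thm:noK23}: it is not actually available, because there is no a priori way to ``check that $Y^{*} = (W - \{a, b\}) \cup \{c, d\}$ is a $\gamma$-set'' --- whether such a double swap yields a dominating set is precisely what is at issue (compare Proposition~\ref{prop:R:sets}, where the analogous double swaps fail to dominate), and the only way to establish it here is to carry out the case analysis that identifies $Y$ with $Y^{*}$ in the first place, so that route is circular. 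Relying on your primary argument, the proof is sound.
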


\begin{proof}
For some $p,r\in W$ and $q,s\notin W$ with $p\sim q$ and $r\sim s$ we have that $X = (W - \{p\}) \cup \{q\}$ and $Z = (W - \{r\}) \cup \{s\}$.  It follows from Lemma~\ref{prop:gt:unique} that $q \neq s$.  If $q, s \notin Y$, then as $Y\ne W$, for some vertex $t\notin X\cup Z$, with $q,s\in N(t)$, $Y = (X - \{q\}) \cup \{t\} = (Z - \{s\}) \cup \{t\}$ and $p = r$ with $p\notin Y$.  But then $p q t s$ is a 4-cycle in $T$, which contradicts that $T$ is a tree. Hence either $q\in Y$ or $s\in Y$.  Suppose without loss of generality $q \in Y$.  In the case that $s \notin Y$, then $Y =  (Z - \{s\}) \cup \{q\}$ and therefore $(X - \{r\})\cup \{p\}$.  This implies $s$ is adjacent to $q$ in $T$ and $r$ is adjacent to $p$ in $T$ (with $p\ne r$).  But then $p q s r$ is a 4-cycle in $T$, contradicting that $T$ is a tree.  Hence, $q, s \in Y$.  If $p=r$, then there exists a vertex $u\in W$ 
so that $Y = (X - \{ u\} )\cup \{ s\}= (Z - \{ u\} )\cup \{q\}$ and hence both $s$ and $q$ are adjacent to $u$ in $T$.  It follows that $s u q p(=r)$ is a cycle in $T$, showing that $p$ and $r$ are distinct. Thus $Y = (W - \{p, r\} )\cup \{q, s\}$, as required. 
\end{proof}

We have shown that two 4-cycles in $T(\gamma)$ cannot overlap in more than two vertices.  On the other hand, we can also say something about the structure required in $T(\gamma)$ when two 4-cycles have only one common vertex.  We first establish the following.

\begin{prop} \label{prop:R:sets}
Let $G$ be a $\gamma$-graph of a tree $T$ and let $C = WXYZ$ be a 4-cycle of $G$. By Proposition~\ref{prop:4cycle} there are distinct $a, b\in W$ and distinct $c,d\notin W$ with $a\sim c$ and $b\sim d$, so that $X= (W - \{a\}) \cup \{c\}$,  $Z= (W - \{b\}) \cup \{d\}$ and $Y = (W - \{a, b\}) \cup \{c, d\}$.  If $U$ and $V$ are each adjacent to $W$ but not part of a 4-cycle with a pair of vertices of $C$ and $U = (W - \{e\}) \cup \{f\}$ and $V = (W - \{g\}) \cup \{h\}$, then $e= g$.
\end{prop}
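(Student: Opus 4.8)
The plan is to first determine exactly which vertex of $W$ each of $U$ and $V$ removes, and then to locate that vertex as the middle vertex of the unique $a$--$b$ path of the tree $T$, forcing $e=g$.

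First I would rule out $e,g\in\{a,b\}$. Suppose $e=a$; since $U\neq X$ we have $f\neq c$, so $W$, $X=(W-\{a\})\cup\{c\}$ and $U=(W-\{a\})\cup\{f\}$ all arise by swapping $a$, and Observation~\ref{spn2} gives $pn(a,W)=\{a\}$. Because $Y=(W-\{a,b\})\cup\{c,d\}$ is a $\gamma$-set and $f$, just like $c$, dominates the self private neighbour $a$, the set $Q=(W-\{a,b\})\cup\{f,d\}$ is again a $\gamma$-set; then $U\,W\,Z\,Q$ is a $4$-cycle using the pair $\{W,Z\}$ of $C$, contradicting the hypothesis on $U$. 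The case $e=b$ is symmetric, so $e\notin\{a,b\}$, and likewise $g\notin\{a,b\}$.

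Next I would convert the ``no $4$-cycle'' hypothesis into a failure of domination. If $R:=(W-\{e,a\})\cup\{f,c\}$ were a $\gamma$-set, then by Proposition~\ref{prop:4cycle} the sets $U,W,X,R$ would form a $4$-cycle through the pair $\{W,X\}$ of $C$; hence $R$ cannot be dominating, so some vertex $w$ satisfies $w\notin N[W-\{e,a\}]\cup N[f]\cup N[c]$. Using that $U=(W-\{e\})\cup\{f\}$ and $X=(W-\{a\})\cup\{c\}$ are \emph{both} dominating, a short neighbourhood computation forces $w\in N[a]\cap N[e]$; moreover $w\neq a,e$ (as $a\sim c$ and $e\sim f$), so $w$ is a common neighbour of $a$ and $e$ in $T$. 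Running the identical argument with the pair $\{W,Z\}$ in place of $\{W,X\}$ produces a common neighbour $w'$ of $b$ and $e$.

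Finally I would invoke uniqueness of paths in a tree. The five vertices $a,w,e,w',b$ are pairwise distinct: $a\neq b$ and $a,b\neq e$ are already known, while the non-adjacencies forced above (for instance $w\not\sim b$ since $b\in W-\{e,a\}$, and $w'\not\sim a$ by symmetry) separate $w,w'$ from the rest and from each other. Thus $a\,w\,e\,w'\,b$ is a path in $T$, hence \emph{the} $a$--$b$ path. Performing the same analysis for $V$ yields a path $a\,\tilde w\,g\,\tilde w'\,b$, which must coincide with this same $a$--$b$ path; comparing middle vertices gives $e=g$, as required. I expect the technical heart to be the neighbourhood bookkeeping that extracts the common neighbour $w$ from the non-domination of $R$; once that structure is in place, uniqueness of the $a$--$b$ path makes the conclusion immediate.
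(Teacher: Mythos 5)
Your main engine is sound and is essentially the paper's argument in different packaging: the paper likewise converts ``$(W-\{a,e\})\cup\{c,f\}$ is not a $\gamma$-set'' (and its three analogues) into common neighbours $i$ of $a,e$, $j$ of $b,e$, $k$ of $a,g$, $l$ of $b,g$ with the same non-adjacency constraints, and then closes the two walks $a\,i\,e\,j\,b$ and $a\,k\,g\,l\,b$ into a forbidden $8$-cycle; your appeal to the uniqueness of the $a$--$b$ path is exactly that use of acyclicity, stated more cleanly.

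The genuine gap is your preliminary exclusion of $e\in\{a,b\}$. From $pn(a,W)=\{a\}$ you infer that $Q=(W-\{a,b\})\cup\{f,d\}=(Z-\{a\})\cup\{f\}$ is dominating because $f$ covers $a$; but the relevant private neighbourhood is $pn(a,Z)$, not $pn(a,W)$, and Observation~\ref{basic} only gives $pn(a,Z)=pn(c,Y)\subseteq\{a,c\}$. The vertex $c$ can genuinely lie in $pn(a,Z)$ --- precisely when $c$ is the common neighbour of $a$ and $b$ and has no other dominator --- and then $Q$ fails to dominate $c$, since $f\ne c$ and $f\sim c$ would create the triangle $a\,f\,c$. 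Concretely, let $T$ be the path $d\,b\,c\,a\,f\,w_1$ with two further leaves attached to $w_1$: then $W=\{a,b,w_1\}$, $X=\{c,b,w_1\}$, $Z=\{a,d,w_1\}$, $Y=\{c,d,w_1\}$ form the $4$-cycle $C$, while $U=\{f,b,w_1\}$ is a pendant of $W$ lying on no $4$-cycle at all, and here $e=a$. So the conclusion of your first step is false, and everything afterwards presupposes it (you cannot even speak of a common neighbour of $a$ and $e$ when $a=e$). The statement itself survives --- one can show that $e=a$ forces $c\sim b$, and that this in turn forces $g=a$ as well (via a $6$-cycle through $c$ if $g\notin\{a,b\}$, and via a $4$-cycle through $\{W,X\}$ if $g=b$) --- but that degenerate case needs its own argument, which your proposal does not supply.
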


\begin{proof}
Suppose the opposite.  It follows from Lemma~\ref{prop:gt:unique} that $c$, $d$, $f$ and $h$ are all distinct.  As neither $U$ nor $V$ is part of a 4-cycle with any two vertices of $C$,  $(W - \{a, e\}) \cup \{c, f\}$, is not a $\gamma$-set of $T$. Hence, if $a\ne e$, $a$ and $e$ have common neighbour $i$ which is not adjacent to any vertex in $(W - \{a, e\}) \cup \{c, f\}$.  As $b, g\in  (W - \{a, e\}) \cup \{c, f\}$, $i$ is not adjacent to $b$ or $g$ in $T$.  Similarly it can be seen $(W - \{b, e\}) \cup \{d, f\}$, $(W - \{a, g\}) \cup \{c, h\}$, and $(W - \{b, g\}) \cup \{d, h\}$ are not $\gamma$-sets of $T$. Hence, if $b\ne e$, $b$ and $e$ have common neighbour $j$ not adjacent to $a$ or $g$, if $a\ne g$, then $a$ and $g$ have common neighbour $k$ not adjacent to $b$ or $e$, and if $b\ne g$, $b$ and $g$ have common neighbour $l$ not adjacent to $a$ or $e$.  It can be seen that $i, j, k, l$ (if they exist) are all distinct.  

Suppose $a=e$.  Then as $a$ and $b$ are distinct, $b\ne e$. Hence, $b$ and $e$ have common neighbour $j$ not adjacent to $a=e$, a contradiction.  Hence $a\ne e$. Similarly we can show $b\ne e$, $a\ne g$ and $b\ne g$.  But then $a i e j b l g k$ is a cycle in $T$, contradicting that $T$ is a tree.  The result follows.
\end{proof}

\begin{prop} \label{lem:no-link}
Let $G$ be a $\gamma$-graph of a tree $T$ and let $WXYZ$ and $WVUS$ be 4-cycles of $G$.  Then there must be a 4-cycle $WRQP$, with $R \in \{X, Z\}$ and $P \in \{S, V\}$.
\end{prop}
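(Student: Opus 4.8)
The plan is to argue by contradiction: I will assume there is no $4$-cycle $WRQP$ with $R\in\{X,Z\}$ and $P\in\{S,V\}$, and show that the two edges of the second $4$-cycle at $W$ would then be forced to swap out the \emph{same} vertex of $W$, contradicting the structure given by Proposition~\ref{prop:4cycle}.

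First I would record the swap data of the second $4$-cycle. Applying Proposition~\ref{prop:4cycle} to $WVUS$ yields distinct $p,q\in W$ and distinct $r,s\notin W$ with $p\sim r$, $q\sim s$, such that $V=(W-\{p\})\cup\{r\}$ and $S=(W-\{q\})\cup\{s\}$. The only feature I need is the conclusion $p\neq q$: the edges $WV$ and $WS$ swap out different vertices of $W$.

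Next I would verify that the contradiction hypothesis supplies exactly the hypothesis of Proposition~\ref{prop:R:sets} relative to the first $4$-cycle $C=WXYZ$. In the proof of Proposition~\ref{prop:R:sets}, a $W$-neighbour being ``part of a $4$-cycle with a pair of vertices of $C$'' means forming a $4$-cycle $WRQ(\cdot)$ with $R\in\{X,Z\}$ (these are the combined-swap configurations $(W-\{a,\cdot\})\cup\{c,\cdot\}$ and $(W-\{b,\cdot\})\cup\{d,\cdot\}$). Such a cycle with $(\cdot)=V$ is a $WRQP$ with $P=V\in\{S,V\}$, and with $(\cdot)=S$ it is a $WRQP$ with $P=S$; both are forbidden by the contradiction hypothesis. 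Hence neither $V$ nor $S$ is part of a $4$-cycle with a pair of vertices of $C$. The same hypothesis also rules out $V,S\in\{X,Z\}$ (the degenerate case where the two $4$-cycles share an edge): if, say, $V=X$, then $WVUS=WXUS$ is already a $4$-cycle of the required form with $R=X$ and $P=S$.

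With these two neighbours $V,S$ of $W$ satisfying its hypotheses, Proposition~\ref{prop:R:sets} applied to $C=WXYZ$ forces $V$ and $S$ to swap out the same vertex of $W$, i.e.\ $p=q$, contradicting $p\neq q$ from the second step; this completes the argument. I expect the only genuine subtlety — and the step I would be most careful to get right — is the bookkeeping in the third paragraph: confirming that the phrase ``part of a $4$-cycle with a pair of vertices of $C$'' corresponds precisely to a cycle $WRQ(\cdot)$ with $R\in\{X,Z\}$, so that it simultaneously matches what the contradiction hypothesis forbids and what Proposition~\ref{prop:R:sets} demands. Everything else is a direct invocation of Propositions~\ref{prop:4cycle} and~\ref{prop:R:sets}.
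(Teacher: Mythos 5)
Your argument is correct and is essentially identical to the paper's proof: both apply Proposition~\ref{prop:4cycle} to $WVUS$ to get that $V$ and $S$ swap out \emph{distinct} vertices of $W$, and then apply Proposition~\ref{prop:R:sets} to the cycle $WXYZ$ with the neighbours $V$ and $S$ to force those vertices to coincide, a contradiction. Your extra remarks handling the degenerate case $V\in\{X,Z\}$ or $S\in\{X,Z\}$ and matching the phrase ``part of a 4-cycle with a pair of vertices of $C$'' are careful bookkeeping the paper leaves implicit, but the route is the same.
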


\begin{proof}
Suppose not.  Then neither $S$ nor $V$ is part of a 4-cycle with a pair vertices of $WXYZ$.  By Proposition~\ref{prop:4cycle},  there are distinct vertices $a, b\in W$ and $c, d\notin W$,   with   $a\sim c$ and $b\sim d$  so that $S = (W - \{a\}) \cup \{c\}$ and $V = (W - \{b\}) \cup \{d\}$.  By Proposition~\ref{prop:R:sets}, $a=b$, a contradiction.  The result follows.
\end{proof}

Define two 4-cycles of $G$ to be {\em adjacent} if they share an edge and two adjacent cycles to be neighbours.  Then Proposition~\ref{lem:no-link} can be reworded to say that incident 4-cycles in a $\gamma$-graph of a tree must be adjacent to each other or have a common neighbour. We conclude our treatment of vertices of 4-cycles by demonstrating that adjacent vertices of a 4-cycle cannot both have vertex neighbours that are not part of neighbouring 4-cycles.

\begin{prop}\label{adjacent}
Let $G$ be the $\gamma$-graph of a tree and let $C$ be an induced $C_4$ of $G$.  Then in any two adjacent vertices of $C$ at most one has a neighbour which is not part of a 4-cycle with a pair of vertices of $C$.
\end{prop}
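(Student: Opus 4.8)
The plan is to argue by contradiction, normalising the $4$-cycle with Proposition~\ref{prop:4cycle} and then manufacturing a cycle in $T$ out of the adjacencies that two ``escaping'' edges are forced to create.

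First I would set up notation. By Proposition~\ref{prop:4cycle} I may write $C=WXYZ$ with $W=R\cup\{a,b\}$, $X=R\cup\{b,c\}$, $Y=R\cup\{c,d\}$ and $Z=R\cup\{a,d\}$, where $R=W\cap X\cap Y\cap Z$, $a\sim c$ and $b\sim d$ in $T$. By symmetry it suffices to treat the adjacent pair $W,X$ (the edge performing the swap $a\leftrightarrow c$). Assume, for a contradiction, that each of $W$ and $X$ has a neighbour in $G$ that is \emph{not} part of a $4$-cycle with a pair of vertices of $C$; write these as $U=(W-\{e\})\cup\{f\}$ and $U'=(X-\{e'\})\cup\{g\}$ with $e\sim f$ and $e'\sim g$. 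By Proposition~\ref{prop:R:sets} the discarded vertex $e$ is common to every such neighbour of $W$, and likewise $e'$ is common for $X$.

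The key tool is a $4$-cycle completion observation. For $e\neq a$, the neighbour $U$ lies on a $4$-cycle through the pair $\{W,X\}$ exactly when $(X-\{e\})\cup\{f\}$ is again a $\gamma$-set, the fourth corner being $(X-\{e\})\cup\{f\}=(U-\{a\})\cup\{c\}$; Lemma~\ref{prop:gt:unique} gives $f\neq c$, so the four sets are distinct and the cycle is genuine. Since $U$ is assumed to lie on no such cycle, $(X-\{e\})\cup\{f\}$ fails to dominate, and the undominated vertex $w$ (unique by Lemma~\ref{lem:gt:change}) satisfies $w\in N[a]\cap N[e]$ and $w\notin N[c]\cup N[f]$. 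Running the same argument through the pair $\{W,Z\}$ (available when $e\neq b$) produces a vertex $w'\in N[b]\cap N[e]$ with $w'\notin N[d]\cup N[f]$. The mirror analysis at $X$ gives, from $U'$, a vertex $\tilde w\in N[c]\cap N[e']$ with $\tilde w\notin N[a]\cup N[g]$ (when $e'\neq c$) and a vertex $\tilde w'\in N[d]\cap N[e']$ with $\tilde w'\notin N[b]\cup N[g]$ (when $e'\neq b$). Observations~\ref{basic}--\ref{spn2} pin down the boundary swaps: if $a$ can be swapped out of $W$ to anything other than $c$, then $pn(a,W)=\{a\}$, and symmetrically for $c$ at $X$; in particular $e'\neq c$ whenever $e=a$, so at least one completion route is always available at each of $W$ and $X$.

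With $a\sim c$ and $b\sim d$ available as bridges, these forced (near-)common neighbours assemble into a non-backtracking closed walk in $T$, which is impossible in a tree. In the principal case $e=e'$ the walk is $w-a-c-\tilde w-e-w$; in the generic case $e\neq e'$ with $e,e'\in R$ one splices both pairs into $w-a-c-\tilde w-e'-\tilde w'-d-b-w'-e-w$; and the boundary values $e\in\{a,b\}$ or $e'=b$ collapse these into shorter closed walks of the same type. In every case the conditions of the form $w\notin N[c]$ and $\tilde w\notin N[a]$ guarantee that consecutive edges do not retrace, so a genuine cycle of $T$ appears. I expect the main obstacle to be precisely this bookkeeping: verifying in each case that the manufactured vertices are distinct enough to yield a true cycle, handling collapses such as $w=a$, $w=e$ or $\tilde w=\tilde w'$, and invoking Theorem~\ref{thm:noK23} to forbid a single vertex from having three of these common neighbours. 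Once the closed walk is shown to be non-backtracking of positive length in every case, the contradiction with $T$ being a tree completes the proof.
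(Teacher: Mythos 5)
Your strategy is the same as the paper's: for each of the (at most four) ways a neighbour of $W$ or $X$ could complete to a $4$-cycle with an edge of $C$, the failure of the completion leaves a vertex undominated by the corresponding double swap; that vertex is forced to be a common neighbour of the two removed vertices and to avoid the closed neighbourhoods of the two added ones, and these forced common neighbours are then strung together into a cycle of $T$. However, one of your four derivations is wrong, and it breaks the central construction as written. For the completion of $U'=(X-\{e'\})\cup\{g\}$ through the pair $\{X,Y\}$, where $Y=(X-\{b\})\cup\{d\}$, the set that fails to dominate is $(X-\{e',b\})\cup\{g,d\}$; since $X$, $Y$ and $U'$ all dominate, the undominated vertex $\tilde w'$ lies in $N[b]\cap N[e']$ and \emph{outside} $N[d]\cup N[g]$ --- you have $b$ and $d$ interchanged. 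This is not cosmetic: your generic closed walk $w\,a\,c\,\tilde w\,e'\,\tilde w'\,d\,b\,w'\,e\,w$ traverses the edge $\tilde w'd$, which by the corrected computation does not exist. The repair is immediate (with $\tilde w'\in N[b]$ you pass directly from $\tilde w'$ to $b$ and recover exactly the paper's cycle $a\,c\,\tilde w\,e'\,\tilde w'\,b\,w'\,e\,w$), but the step as stated fails.

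The second issue is that the degenerate cases --- $e\in\{a,b\}$, $e'\in\{b,c\}$, and coincidences among $w,w',\tilde w,\tilde w'$ --- are only gestured at, and in the paper they constitute the bulk of the proof; they do not all simply ``collapse into shorter closed walks of the same type.'' For example, when $e=a$ the surviving common neighbour of $b$ and $e=a$ is forced to equal $c$ (any other choice contradicts $Y=(W-\{a,b\})\cup\{c,d\}$ being dominating), which changes the shape of the subsequent argument, and the paper disposes of the subcase $e=a$, $e'=c$ by a private-neighbour argument along the path $f\,a\,c\,g$ rather than by exhibiting a cycle. Your observation that $e=a$ forces $e'\neq c$ (via Observations~\ref{basic} and~\ref{spn2}) is correct and would let you avoid that last subcase, which is a genuine simplification over the paper; but the remaining boundary bookkeeping, which you explicitly defer, still has to be carried out and is where most of the work lies.
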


\begin{proof}
Suppose $G$ is the $\gamma$-graph of a tree $T$.  Let $C = WXYZ$ and suppose that $W$ and $X$ have neighbours $U$ and $V$ respectively, which are not part of a 4-cycle with any two vertices of $C$.  By Proposition~\ref{prop:4cycle}, there are distinct vertices $a, b\in W$ and $c,d\notin W$ so that $a\sim c$, $b\sim d$, $X = (W - \{a\}) \cup \{c\}$, $Z = (W - \{b\})\cup \{d\}$ and $Y = (W - \{a, b\}) \cup \{c, d\}$.  There are vertices $e\in W$, $g\in X$, $f\notin W$ and $h\notin X$ so that $e\sim f$, $g\sim h$, $U = (W - \{e\}) \cup \{f\}$ and $V = (X - \{g\}) \cup \{h\}$.  We note from Lemma~\ref{prop:gt:unique} that $d$, $f$  and $c$ are all distinct and that $d$, $h$ and $a$ are all distinct.
As neither $U$ nor $V$ is part of a 4-cycle with any two vertices of $C$ we get:
\begin{itemize}
\item if $a\ne e$, then as $(W - \{a, e\}) \cup \{c, f\}$ is not a $\gamma$-set of $T$,  $a$ and $e$ have a common neighbour $i\notin N[b]\cup N[c]$.
\item if $b\ne e$, then as $(W - \{b, e\}) \cup \{d, f\}$ is not a $\gamma$-set of $T$, $b$ and $e$ have a common neighbour $j\notin N[a]$.
\item if $b\ne g$, then as $(X - \{b, g\}) \cup \{d, h\}$  is not a $\gamma$-set of $T$, $b$ and $g$ have a common neighbour $k\notin N[c]$.
\item if $c\ne g$, then as $(X - \{c, g\}) \cup \{a, h\}$ is not a $\gamma$-set of $T$, $c$ and $g$ have a common neighbour $l\notin N[a]\cup N[b]$.
\end{itemize}  

 Note that in the case they exist, $i\notin \{j,k,l\}$ and $k\ne l$.  If $e\notin\{a,b\}$ and $g\notin\{b,c\}$, then $a c l g k b j e i$  is a cycle in $T$, contradicting that $T$ is a tree. Hence, by symmetry, we may assume either $e=a$ or $e=b$.

Assume first that $e = a$.  Then as $a$ and $b$ are distinct, $b\ne e$. Therefore, from above $a=e$ and $b$ have a common neighbour $j$.  Since $Y= (W - \{a, b\}) \cup \{c, d\}$ is a $\gamma$-set of $T$, it must be that $j=c$ (if $j\sim c$, then $a j c$ is a 3-cycle in $T$).  If $b\ne g$ and $c\ne g$, then as $k\ne c$,  $b c l g k$ is a cycle in $T$, which contradicts that $T$ is a tree.  If $g = b$, as $b\in W$ and $c\notin W$, $c\ne g.$ From above $c$ and $g=b$ have a common neighbour, but $c$ and $b$ are adjacent, contradicting that $T$ is a tree. If $c=g$,  then the path $U W X V$ in $G$ corresponds to a swap of $f$ and $a$, followed by a swap of $a$ and $c$, followed by a swap of $c$ and $h$.  This corresponds to swaps of the four distinct vertices which induce a path $f a c h$ in $T$ and hence $f$ and $h$ are distance 2 apart in $T$.  By Observation~\ref{basic}, $pn(f, U)=pn(a,W)=pn(c,X)=pn(h,V)$, but no vertex is in the closed neighbourhood of both $f$ and $h$, a contradiction.

Otherwise, assume $e = b$.  Then as $a$ and $b$ are distinct, $a\ne e$.   Therefore from above, $a$ and $b=e$ have a common neighbour, and since $Y$ is a $\gamma$-set, it must be $d$.  If $b\ne g$ and $c\ne g$, then either $a c k g l b d$ is a cycle in $T$ or contains a cycle in $T$, which contradicts that $T$ is a tree.  If $b=g$, then $c\ne g$. So from above $c$ and $g=b$ have a common neighbour $l$. As $l\notin N[b]$, $l\ne d$ so $a c l b d$ is a cycle in $T$, contradicting that $T$ is a tree.  If $g = c$, then $g\ne b$.  From above $b$ and $g=c$ have a common neighbour $k$.   As $k\notin N[c]$, $k\ne a$ so $a c k b d$ is a cycle in $T$, contradicting that $T$ is a tree.  The result follows.
\end{proof}

While the evidence presented in this section is not conclusive, it highlights that locally, 4-cycles interact similarly to how 4-cycles interact in Cartesian Product graphs.

\subsection{The Structure of Stems in 4-Cycles}

Finally, we explore some properties of stems in a 4-cycle in the $\gamma$-graph of a tree. In particular we focus on the structure of the dominating set associated with the stem and the corresponding structure of the $\gamma$-graph. We first obtain the following lemma.

\begin{Lemma}\label{k=1}
Let $G$ be a $\gamma$-graph of a tree $T$,  $S$ be a stem of $G$,  $L_1, L_2, \ldots, L_k$ be the leaves of $S$, and  $G' = G - \{L_1, L_2, \ldots, L_k\}$.  If every vertex in $S$ has an $S$-external private neighbour, then $k=1$.  
\end{Lemma}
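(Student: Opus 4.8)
The plan is to argue by contradiction: suppose $k \ge 2$. Since this lemma sits in the treatment of stems lying on 4-cycles, $S$ has (at least) two neighbours in $G$ arising from the cycle in addition to its leaves $L_1, \dots, L_k$; combined with $k \ge 2$ this forces $\deg_G(S) \ge 3$. The reason for pinning down $\deg_G(S) \ge 3$ is precisely to make Lemma~\ref{lem:gt:stem} available, and I expect this degree bookkeeping to be the only delicate point of the whole argument — everything after it is a direct appeal to the earlier observations.

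With $\deg_G(S) \ge 3$ in hand, Lemma~\ref{lem:gt:stem} supplies a single vertex $x \in S$ together with vertices $y_1, \dots, y_k \in V(T) - S$ such that $x \sim y_i$ and $L_i = (S - \{x\}) \cup \{y_i\}$ for every $i$. The essential gain here is that all $k$ leaf-swaps remove the \emph{same} vertex $x$ of $S$. Because the leaves $L_1, \dots, L_k$ are pairwise distinct $\gamma$-sets, the attached vertices $y_1, \dots, y_k$ are pairwise distinct as well; in particular $y_1 \ne y_2$, and both lie in $N(x)$.

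Finally I would apply Observation~\ref{spn2} to the three $\gamma$-sets $S$, $L_1 = (S - \{x\}) \cup \{y_1\}$, and $L_2 = (S - \{x\}) \cup \{y_2\}$, using that $y_1, y_2 \in N(x)$ are distinct. This yields $pn(S, x) = \{x\}$, so $x$ has no $S$-external private neighbour. That directly contradicts the standing hypothesis that every vertex of $S$ has an $S$-external private neighbour, and the contradiction forces $k = 1$. Thus the only genuine obstacle is justifying $\deg_G(S) \ge 3$ so that Lemma~\ref{lem:gt:stem} applies; once the common swap-out vertex $x$ is identified, Observation~\ref{spn2} closes the argument at once.
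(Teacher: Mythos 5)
Your core argument coincides with the paper's: apply Lemma~\ref{lem:gt:stem} to obtain a single swapped-out vertex $x$ with distinct neighbours $y_1,\ldots,y_k$, and then, if $k\ge 2$, Observation~\ref{spn2} forces $pn(S,x)=\{x\}$, contradicting the hypothesis that $x$ has an $S$-external private neighbour. (The paper's proof inserts an intermediate deduction via Lemmas~\ref{lem:gt:pn} and~\ref{lem:gt:change} that every vertex of $S$ other than $x$ has an external private neighbour, but under the stated hypothesis that step is redundant; your streamlined version loses nothing there.)

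The one genuine gap is exactly the step you flagged: justifying $\deg_G(S)\ge 3$ so that Lemma~\ref{lem:gt:stem} applies. Your justification --- that the lemma ``sits in the treatment of stems lying on 4-cycles,'' so $S$ has two non-leaf neighbours --- appeals to the surrounding context rather than to any hypothesis of the statement; nothing in the statement guarantees that $S$ has even one neighbour besides its leaves. This cannot be repaired from the stated hypotheses, because the lemma as written is false without a degree assumption: take $T=P_5$ with vertex path $v_1v_2v_3v_4v_5$, so that $T(\gamma)\cong P_3$ with centre $S=\{v_2,v_4\}$ and leaves $L_1=\{v_1,v_4\}$ and $L_2=\{v_2,v_5\}$; here $k=2$, yet $v_1\in pn(v_2,S)$ and $v_5\in pn(v_4,S)$, so every vertex of $S$ has an $S$-external private neighbour. (The conclusion of Lemma~\ref{lem:gt:stem} also fails in this example, since $L_1$ and $L_2$ are obtained by removing different vertices of $S$.) The paper's own proof has the same hole --- it invokes Lemma~\ref{lem:gt:stem} without verifying its degree-three hypothesis --- and the result is saved only because its sole application, Proposition~\ref{prop:gt:cart-spn}, assumes $\deg_{G'}(S)\ge 3$. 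So your instinct that this is the delicate point is right; the honest fix is to add the hypothesis that $S$ has at least one non-leaf neighbour (i.e.\ $\deg_{G'}(S)\ge 1$), which together with $k\ge 2$ yields $\deg_G(S)\ge 3$, after which your argument and the paper's coincide.
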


\begin{proof}
By Lemma~\ref{lem:gt:stem}, 
there exists  vertices $x\in S$ and $y_1, y_2, \ldots, y_k\in V(T)-S$ so that $x\sim y_i$ and $L_i = (S - \{x\}) \cup \{y_i\}$ for $i=1, 2, \ldots k$. By Lemma~\ref{lem:gt:pn}, every vertex except $y_j$ in $L_j$ has at least two $L_j$-external private neighbours, so by Lemma~\ref{lem:gt:change}, every vertex other than $x$ in $S$ has at least one $S$-external private neighbour. It follows that $x$ has an $S$-external private neighbour and therefore by Observation~\ref{spn2}, $k = 1$. 
\end{proof}

\begin{prop} \label{prop:gt:cart-spn}
Let $G$ be a $\gamma$-graph of a tree $T$,  $S$ be a stem of $G$,  $L_1, L_2, \ldots,$ $L_k$ be the leaves of $S$, and  $G' = G - \{L_1, L_2, \ldots, L_k\}$.  If $\deg_{G'}(S) \ge 3$, then $G'$ is a Cartesian product graph if and only if  every vertex in $S$ has an $S$-external private neighbour.
\end{prop}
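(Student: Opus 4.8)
The plan is to prove both implications after translating the private-neighbour hypothesis into a description of the swaps available at the $\gamma$-set $S$. For a vertex $v \in S$, Observation~\ref{basic} forces any swap $v \to w$ to satisfy $pn(v, S) \subseteq \{v, w\}$ with $v \sim w$; since $T$ is a tree this yields a clean trichotomy: a vertex with at least two $S$-external private neighbours admits no swap (no single neighbour can dominate two external private neighbours without creating a triangle); a vertex with exactly one $S$-external private neighbour $y$ admits the single swap $v \to y$; and a vertex that is only a self private neighbour swaps to each of its $T$-neighbours (Observation~\ref{spn}), producing neighbours of $S$ that have $\{v\}$ as private neighbour and are pairwise non-adjacent in $G$ (Observation~\ref{spn2}). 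Thus ``every vertex of $S$ has an external private neighbour'' is equivalent to ``every swap available at $S$ has a unique target,'' and in that case Lemma~\ref{k=1} gives $k = 1$, so $G' = G - L_1$.

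For sufficiency I would show that, under the all-external-private-neighbour hypothesis, the unique swaps attached to the non-leaf neighbours of $S$ are globally independent and exhibit $G'$ as a genuine Cartesian product. First, for any two such swappable vertices $v_i, v_j$ with targets $y_i, y_j$, Lemma~\ref{prop:gt:unique} gives $y_i \neq y_j$, and the simultaneous swap $(S - \{v_i, v_j\}) \cup \{y_i, y_j\}$ is again a $\gamma$-set --- this is exactly the 4-cycle of Proposition~\ref{prop:4cycle}. I would then promote this local independence to a factorization by an induction that removes one swap direction at a time: each swap extends to an involution on all $\gamma$-sets of $T$ which is a Cartesian factor, so that $G' \cong G'' \bbox F$, where $F$ is the factor traced out by that direction and $G''$ is again a $\gamma$-graph of a tree satisfying the same hypotheses. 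This step mirrors the linking construction of Lemma~\ref{lem:gt:cut} in reverse (equivalently, one locates a vertex of $T$ lying in no $\gamma$-set and applies the product decomposition of Theorem~\ref{thm:tree-wo}), and the base case returns a single $\gamma$-graph factor.

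For necessity I would argue the contrapositive: if some $u \in S$ is only a self private neighbour, then $G'$ is not a Cartesian product graph. The swaps of $u$ give neighbours $M_w = (S - \{u\}) \cup \{w\}$ of $S$ that lie in no common 4-cycle with one another, since by Proposition~\ref{prop:4cycle} the two squares sharing $S$ must move different vertices of $S$; thus $u$ behaves like a single ``factor direction'' that is star-shaped rather than path- or cycle-shaped. Using the interaction lemmas for incident 4-cycles (Propositions~\ref{lem:no-link} and~\ref{adjacent}) together with the $K_{2,3}$-freeness of Theorem~\ref{thm:noK23} and the hypothesis $\deg_{G'}(S) \ge 3$, I would show that the square relation $\Theta$ (Djokovi\'c--Winkler) generated by the 4-cycles of $G'$ does not split into at least two factor-classes --- concretely, that $G'$ contains the same ``staircase'' obstruction that prevents the stepgrid $SG(3) = P_7(\gamma)$ (whose central $\gamma$-set has precisely such a self private neighbour) from factoring, even though every vertex of $SG(3)$ looks product-like.

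I expect the main obstacle to be the global step in each direction. The local 4-cycle structure furnished by Propositions~\ref{prop:4cycle}, \ref{lem:no-link}, and~\ref{adjacent} only controls the neighbourhood of a single 4-cycle, whereas ``$G'$ is a Cartesian product graph'' is a statement about the whole $\gamma$-graph: as $SG(3)$ shows, a graph can be product-like at every vertex and still fail to factor. Bridging this gap is the crux. For sufficiency I must guarantee that the swap-involutions are defined and mutually independent across \emph{all} $\gamma$-sets of $T$, not merely near $S$; for necessity I must convert the presence of a local self private neighbour into a genuine global failure of the factor relation. I anticipate that the decomposition Theorem~\ref{thm:tree-wo}, combined with an induction on $\deg_{G'}(S)$, is what carries this global argument.
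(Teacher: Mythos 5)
Your opening reduction (the trichotomy of swaps at a $\gamma$-set, the fact that only the vertex $x$ with $L_i=(S-\{x\})\cup\{y_i\}$ can lack an $S$-external private neighbour, and the observation via Proposition~\ref{prop:4cycle} that two swaps removing the same vertex of $S$ cannot span a 4-cycle) matches the paper's setup. But both global steps --- which you yourself flag as ``the crux'' --- are left as strategy sketches, and the strategies you name are not the ones that work. For sufficiency, the missing idea is not an induction peeling off factor directions, nor Theorem~\ref{thm:tree-wo} (which requires a vertex in \emph{no} $\gamma$-set): it is that $x$ lies in \emph{every} $\gamma$-set that is a vertex of $G'$. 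This follows from Lemma~\ref{k=1} (so $k=1$ and $y_1$ is the unique $S$-external private neighbour of $x$), Lemma~\ref{unique} applied to the leaf $L_1$ (so $L_1$ is the only $\gamma$-set containing $y_1$), and the fact that the descendants of $y_1$ are then frozen in every $D\ne L_1$. Once $x$ is pinned, the branches of $T$ at $x$ are dominated independently, $G'$ factors over those branches, and $\deg_{G'}(S)\ge 3$ gives at least two swappable vertices in distinct branches, hence at least two nontrivial factors. Note also that your claim that two single swaps at $S$ always compose into the 4-cycle of Proposition~\ref{prop:4cycle} is unjustified as stated: $(S-\{v_i,v_j\})\cup\{y_i,y_j\}$ can fail to dominate a common neighbour of $v_i$ and $v_j$, and it is precisely the pinned vertex $x$ (the only vertex a tree allows as a link between two branches) that rules this out.

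For necessity, your Djokovi\'c--Winkler/$SG(3)$-obstruction route is not developed and is heavier than what is needed. The paper's contradiction is direct: if $G'=G_1\bbox G_2$ and $x$ is only a self private neighbour, then $|Z|\ge 2$, so $x$ has two distinct common neighbours $w_1,w_2$ with elements of $Z$, giving edges $SU_i$ with $U_i=(S-\{x\})\cup\{w_i\}$ that must lie in the same factor (this much you have). Since the other factor is a nontrivial connected graph, there is an edge $e=SW$ of $G'$ in that factor; by Proposition~\ref{prop:4cycle} it swaps some $z_1\in Z$ to a vertex $a\ne w_1$, and the product structure forces $(S-\{x,z_1\})\cup\{w_2,a\}$ to be a $\gamma$-set --- but that set fails to dominate $w_1$, which is adjacent to no vertex of $S-\{x,z_1\}$. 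Your sketch never produces this second-factor edge or the resulting undominated vertex, so as written the proposal does not close either direction.
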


\begin{proof}
By Lemma~\ref{lem:gt:stem}, 
there exists  vertices $x\in S$ and $y_1, y_2, \ldots, y_k\in V(T)-S$ so that $x\sim y_i$ and $L_i = (S - \{x\}) \cup \{y_i\}$ for $i=1, 2, \ldots k$.
By Lemma~\ref{lem:gt:pn}, every vertex except $y_j$ in $L_j$ has at least two $L_j$-external private neighbours, so by Lemma~\ref{lem:gt:change}, every vertex other than $x$ in $S$ has at least one $S$-external private neighbour.  Let $\{x \} \cup Z$ be the set of vertices that can be swapped from $S$. If $z\in Z$, then $z$ has two $L_j$-external private neighbours, but has at exactly one $S$-external private neighbour. Therefore each $z\in Z$ has a distinct common neighbour with $x$ which is not adjacent to any vertex of $S-\{x,z\}$.  Furthermore, rooting $T$ at $x$, each $z\in Z$ is in a distinct branch. By Observation~\ref{basic}, each $z\in Z$ is associated with exactly one edge incident with $S$ in $G'$.  Hence as $\deg_{G'}(S) \ge 3$, if $x$ has no $S$-external private neighbour, then $|Z|\ge 2$.
  
  Suppose $G' = G_1 \bbox G_2$ is a Cartesian product graph but some vertex in $S$  has no $S$-external private neighbour.  Then $x$ has no $S$-external private neighbour and it follows that $|Z|\ge 2$.  For $i\in\{1,2\}$, let $z_i$ be distinct elements of $Z$ and let $w_i$ be common neighbour of $x$ and $z_i$ which is not adjacent to any vertex of $S-\{x,z_i\}$. For a given $i$, $U_i=(S-\{x\})\cup \{w_i\}$ is a $\gamma$-set of $T$ in $G'$ adjacent to $S$.  Let $e_i= SU_i$.  If $e_1$ corresponds to an edge in $G_1$ and $e_2$ corresponds to an edge in $G_2$, note that  $e_1\bbox e_2$ is 4-cycle which is a subgraph of $G'$.  This contradicts that the vertices $a$ and $b$ in the statement of Proposition~\ref{prop:4cycle} are unique.  Hence we may assume each $e_i$ is in $G_1$. By Theorem~\ref{thm:gt:con}, $G_1$ and $G_2$ are connected. Then there exists an edge $e$ incident with $S$ corresponding to an edge in $G_2$. Then for each $i$, $e_i\bbox e$ is 4-cycle which is a subgraph of $G'$.  By Proposition~\ref{prop:4cycle}, $e$ corresponds to a swap of some $z\in Z$, $z=z_1$. Hence if $e=SW$, there exists a vertex $a\notin S$ so that $a\sim z_1$ and $W=(S-\{z_1\})\cup \{a\}$ is $\gamma$-set of $T$ in $G'$. It follows from Observation~\ref{basic} and that $z_1$ has an $S$-external private neighbour that $a\notin w_1$. Consider the 4-cycle $e_2\bbox e$.  By Proposition~\ref{prop:4cycle}, $V=S-\{x,z_1\}\cup \{w_2,a\}$ is a $\gamma$-set of $T$ with no neighbour of $w_1$, a contradiction.  Hence vertex in $S$  has an $S$-external private neighbour.

Conversely, suppose every vertex in $S$ has an $S$-external private neighbour.  By Lemma~\ref{k=1}, $k=1$ and it follows that, $y_1$ is the $S$-external private neighbour of $x$.  Rooting $T$ at $x$, we every descendant of $y_1$ in $L_1$ has at least two $L_1$-external private neighbours.  Hence every descendant of $y_1$ in $S$ has at least two $S$-external private neighbours. By Lemma~\ref{unique}, the only dominating set of $T$ containing $y_1$ is $L_1$ and hence for every $\gamma$-set $D\ne L_1$ of $T$ every descendant of $y_1$ in $S$ has at least two $D$-external private neighbours and can not be swapped. It follows $x$ is in every $\gamma$-set  which corresponds to a vertex in $G'$. Then  $|Z| = \deg_{G'}(S) \ge 3\ge 2$, and each $z_i\in Z$ has a distinct common neighbour with $x$.  Each $z_i$ is in a distinct branch of $T$, and swaps occur in each such branch independently.  Hence, $G'$ is a Cartesian product graph.
\end{proof}

Note that in the proof of the converse of the previous proposition the line, $\deg_{G'}(S) \ge 3$, $|Z| = \deg_{G'}(S) \ge 3\ge 2$.  For the result we require that $|Z|\ge 2$. Hence when a stem in the $\gamma$-graph has fewer non-leaf neighbours, we obtain the result in one direction. 

\begin{prop} \label{prop:gt:spn}
Let $G$ be a $\gamma$-graph of a tree $T$, $S$ be a stem of $G$, $L_1, L_2, \ldots,$ $L_k$ be the leaves of $S$, and $G' = G - \{L_1, L_2, \ldots, L_k\}$.  If $\deg_{G'}(S) \ge 2$, and every vertex in $S$ has an $S$-external private neighbour, then $G'$ is a Cartesian product graph.
\end{prop}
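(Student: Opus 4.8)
The plan is to observe that this statement is precisely the converse (``if'') direction of Proposition~\ref{prop:gt:cart-spn}, reworked under the weaker hypothesis $\deg_{G'}(S) \ge 2$ in place of $\deg_{G'}(S) \ge 3$. As the remark immediately preceding the statement points out, the stronger degree bound was used in that earlier argument only to guarantee $|Z| \ge 2$, where $\{x\} \cup Z$ denotes the set of vertices that can be swapped from $S$; every other step of the converse argument goes through verbatim.

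First I would invoke Lemma~\ref{lem:gt:stem} to obtain a vertex $x \in S$ and vertices $y_1, \ldots, y_k \in V(T) - S$ with $x \sim y_i$ and $L_i = (S - \{x\}) \cup \{y_i\}$ for each $i$. Since by hypothesis every vertex in $S$ has an $S$-external private neighbour, Lemma~\ref{k=1} forces $k = 1$, and $y_1$ is then an $S$-external private neighbour of $x$.

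Next I would establish that $x$ lies in every $\gamma$-set corresponding to a vertex of $G'$. Rooting $T$ at $x$, every descendant of $y_1$ in $L_1$ has at least two $L_1$-external private neighbours, so every descendant of $y_1$ in $S$ has at least two $S$-external private neighbours. By Lemma~\ref{unique}, $L_1$ is the unique dominating set of $T$ containing $y_1$; consequently, for every $\gamma$-set $D \ne L_1$ the descendants of $y_1$ in $S$ retain at least two $D$-external private neighbours and hence cannot be swapped. Therefore $x$ belongs to every $\gamma$-set represented by a vertex of $G'$, and so $|Z| = \deg_{G'}(S)$.

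Finally, with $|Z| = \deg_{G'}(S) \ge 2$ now supplied directly by the hypothesis, each $z_i \in Z$ has a distinct common neighbour with $x$ and lies in a distinct branch of $T$ rooted at $x$. Because swaps occurring in distinct branches do so independently, $G'$ factors as a Cartesian product over these branches, and hence $G'$ is a Cartesian product graph. I do not expect a genuine obstacle: the sole role of the stronger degree bound in Proposition~\ref{prop:gt:cart-spn} was to force $|Z| \ge 2$, and the part that actually requires care—verifying that independence of swaps across the branches rooted at $x$ yields a bona fide Cartesian product factorization—is identical to the earlier proof and can be reproduced exactly as there.
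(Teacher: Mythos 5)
Your proposal is correct and matches the paper exactly: the paper gives no separate proof of this proposition, instead noting (in the remark preceding it) that the converse direction of Proposition~\ref{prop:gt:cart-spn} used $\deg_{G'}(S)\ge 3$ only to conclude $|Z|\ge 2$, so the same argument goes through under the weaker hypothesis. Your reconstruction of that argument (Lemma~\ref{lem:gt:stem}, Lemma~\ref{k=1}, Lemma~\ref{unique}, then $x$ in every $\gamma$-set of $G'$ and independent swaps in distinct branches) is precisely the paper's intended proof.
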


We immediately obtain the following result demonstrating where leaves cannot be attached to Cartesian product graphs.

\begin{Corollary}
Let $G$ be a $\gamma$-graph of a tree $T$, $S$ be a stem of $G$, $L_1, L_2, \ldots, L_k$ be the leaves of $S$, and $G' = G - \{L_1, L_2, \ldots, L_k\}$.  If $\deg_{G'}(S) \ge 3$, we can express $G' = G_1 \bbox G_2 \bbox \cdots \bbox G_n$, where each $G_i$ is `prime', and $S = (V_1, V_2, \ldots, V_n)$, then each $V_i$ is a leaf in $G_i$.
\end{Corollary}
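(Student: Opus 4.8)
The plan is to show that the unique prime factorization of $G'$ coincides with the decomposition of $G'$ into the ``branch'' factors already produced in the proof of Proposition~\ref{prop:gt:cart-spn}, and that $S$ projects to a leaf in each of them. Since $G'$ is a Cartesian product graph and $\deg_{G'}(S)\ge 3$, the forward direction of Proposition~\ref{prop:gt:cart-spn} applies and gives that every vertex of $S$ has an $S$-external private neighbour; in particular, by Lemma~\ref{k=1} we have $k=1$. I would then recall the structure that proof produces: letting $x$ be the vertex of $S$ with $L_1 = (S-\{x\})\cup\{y_1\}$, the vertex $x$ lies in every $\gamma$-set corresponding to a vertex of $G'$, the set $Z$ of vertices other than $x$ that can be swapped from $S$ has size $\deg_{G'}(S)$, and, rooting $T$ at $x$, the elements of $Z$ lie in pairwise distinct branches.

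First I would set up the branch decomposition. Rooting $T$ at $x$ and using that $x$ belongs to every $\gamma$-set of $G'$ and separates the branches, every swap between adjacent $\gamma$-sets of $G'$ is confined to a single branch; hence $G'$ is the Cartesian product of graphs $H_i$, where $H_i$ is the local $\gamma$-graph arising from branch $i$ and $S$ projects to a vertex $S_i$ of $H_i$. Each $H_i$ is connected by Theorem~\ref{thm:gt:con}. The key local observation is that the neighbours of $S_i$ in $H_i$ correspond exactly to the swaps available within branch $i$ from $S$. Since the elements of $Z$ lie in distinct branches and each such vertex has a unique $S$-external private neighbour, hence a single possible swap target, a branch either contains no element of $Z$, giving $H_i \cong K_1$, or contains exactly one, giving $\deg_{H_i}(S_i) = 1$. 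Discarding the $K_1$ factors, $G'$ is the Cartesian product of the $|Z| = \deg_{G'}(S)$ graphs $H_i$ indexed by the branches meeting $Z$, and $S_i$ is a leaf of $H_i$ in each.

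To finish, I would invoke the fact that the Cartesian product of two connected graphs of order at least two has minimum degree at least two, and hence no leaf: since each nontrivial $H_i$ possesses the leaf $S_i$, each $H_i$ is prime. By uniqueness of prime factorization (Sabidussi--Vizing), these factors are precisely the prime factors $G_1, \ldots, G_n$ of $G'$, so $n = \deg_{G'}(S) \ge 3$ and the coordinate $V_i$ of $S$ is the leaf $S_i$ of $G_i$, as claimed.

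The main obstacle I anticipate is the honest justification of the branch decomposition together with the one-swap-per-branch count: one must argue that no swap of a $\gamma$-set of $G'$ crosses between branches (which follows from $x$ being a fixed cut vertex lying in every such $\gamma$-set) and that from $S$ each relevant branch offers exactly one move. Both facts are implicit in the proof of Proposition~\ref{prop:gt:cart-spn}, so the difficulty lies in extracting them cleanly rather than in any new idea; once the branch factorization and the leaf count are in hand, the ``no leaves in nontrivial products'' observation together with uniqueness of factorization closes the argument at once.
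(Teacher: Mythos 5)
Your proposal is correct and follows essentially the same route as the paper: both use Proposition~\ref{prop:gt:cart-spn} and Lemma~\ref{k=1} to get $k=1$ and that $x$ lies in every $\gamma$-set corresponding to a vertex of $G'$, then root $T$ at $x$ and use the fact that swaps are confined to branches, each of which offers exactly one move from $S$. The only difference is cosmetic: where the paper simply asserts that ``without loss of generality each $z\in Z$ swap occurs in a different $G_i$,'' you justify the identification of the branch factors with the prime factors via the observation that a connected graph with a leaf is prime together with uniqueness of the Sabidussi--Vizing factorization, which is a welcome tightening of the same argument.
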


\begin{proof}
By Lemma~\ref{lem:gt:stem}, 
there exists  vertices $x\in S$ and $y_1, y_2, \ldots, y_k\in V(T)-S$ so that $x\sim y_i$ and $L_i = (S - \{x\}) \cup \{y_i\}$ for $i=1, 2, \ldots k$. By Proposition~\ref{prop:gt:cart-spn}, $x$ has an $S$-external private neighbour and it follows from Lemma~\ref{k=1}, that $k=1$ and $y_1$ is the only $S$-external private neighbour of $x$. Rooting $T$ at $x$, and noting every descendant of $y_1$ in $L_1$ has at least two $L_1$-external private neighbours, it follows that $x$ is in every $\gamma$-set  which corresponds to a vertex in $G'$. 

Recalling that $T$ is rooted at $x$ let $Z$ be the set of vertices that can be swapped from $S$ in $G'$; each $z\in Z$ has a distinct common neighbour with $x$ and each $z\in Z$ is in a distinct branch of $T$.  Swaps occur in each such branch independently.  Therefore, if $G' = G_1 \bbox G_2 \bbox \cdots \bbox G_n$, where each $G_i$ is `prime', then without loss of generality each $z\in Z$ swap occurs in a different $G_i$.  Hence $V_i$ is a leaf in $G_i$ as only one swap corresponding to an edge in $G_i$ can occur.
\end{proof}

Let $\alpha^\dagger(G, v)$ denote the maximum number of edges incident with $v$ with no pair of edges part of the same 4-cycle.   We conclude by providing two additional tools to determine if graphs with stems are $\gamma$-graphs.  

\begin{Lemma}\label{alpha}
Let $G$ be a $\gamma$-graph of a tree $T$, $S$ be a stem of $G$, $L_1, L_2, \ldots, L_k$ be the leaves of $S$, and $G' = G - \{L_1, L_2, \ldots, L_k\}$.  If $\deg_{G'}(S) > 2 \alpha^\dagger(G', S)$, then $G'$ is a Cartesian product graph.
\end{Lemma}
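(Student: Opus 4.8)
The plan is to reduce the statement to the private-neighbour criterion already in hand. Since the hypothesis $\deg_{G'}(S) > 2\alpha^\dagger(G', S)$ forces $\deg_{G'}(S) \ge 1$, we get $\alpha^\dagger(G', S) \ge 1$ and hence $\deg_{G'}(S) \ge 3$. Thus Lemma~\ref{lem:gt:stem} applies, producing the distinguished vertex $x \in S$ with leaves $L_i = (S - \{x\}) \cup \{y_i\}$, and, by Lemmas~\ref{lem:gt:pn} and~\ref{lem:gt:change} (exactly as in the proof of Proposition~\ref{prop:gt:cart-spn}), every vertex of $S$ other than $x$ already has an $S$-external private neighbour. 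Consequently the property ``every vertex of $S$ has an $S$-external private neighbour'' fails precisely when $x$ has none, so it suffices to prove the contrapositive: \emph{if $x$ has no $S$-external private neighbour, then $\deg_{G'}(S) \le 2\alpha^\dagger(G', S)$}. Granting this, the conclusion follows from Proposition~\ref{prop:gt:spn}, since then every vertex of $S$ has an $S$-external private neighbour and $\deg_{G'}(S) \ge 3 \ge 2$.

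So assume $x$ has no $S$-external private neighbour; then $pn(x, S) = \{x\}$ and, by Observation~\ref{spn}, $x$ swaps with every neighbour. I would partition the edges of $G'$ incident with $S$ by the swapped-out vertex: let $A$ be those that swap $x$, with $|A| = m$, and let the remaining swappable vertices form $Z$. Each $z \in Z$ has exactly one $S$-external private neighbour, so contributes exactly one edge, which lies in $G'$ because every leaf of $S$ swaps $x$ (Lemma~\ref{lem:gt:stem}); hence $\deg_{G'}(S) = m + |Z|$. The first key step is to observe that no two edges of $A$ lie in a common $4$-cycle: by Proposition~\ref{prop:4cycle} the two edges of any $4$-cycle through $S$ swap \emph{distinct} vertices of $S$, whereas every edge of $A$ swaps $x$. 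Therefore $A$ is an admissible selection for $\alpha^\dagger$, giving $\alpha^\dagger(G', S) \ge m$.

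The second key step is to inject $Z$ into $A$. As established in the proof of Proposition~\ref{prop:gt:cart-spn}, each $z \in Z$ has a common neighbour $w_z$ with $x$ that is adjacent to no vertex of $S - \{x, z\}$, and the $w_z$ are distinct for distinct $z$. I would set $U_z = (S - \{x\}) \cup \{w_z\}$ and $P_z = (S - \{z\}) \cup \{z'\}$, where $z'$ is the external private neighbour of $z$, and exhibit the $4$-cycle $S\, U_z\, Y_z\, P_z$ with $Y_z = (S - \{x, z\}) \cup \{w_z, z'\}$: the main calculation is to check that $Y_z$ dominates $T$ (here $w_z$ covers both $x$ and $z$, while $z'$ covers itself, and $z$ has no private neighbour outside $\{z, z'\}$), so that $Y_z$ is a $\gamma$-set and the four swaps $x \!\to\! w_z$, $z \!\to\! z'$, $w_z \!\to\! x$, $z' \!\to\! z$ are all legitimate edges. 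Since a $4$-cycle contains no leaf, $U_z$ is not a leaf, so $S U_z$ is an edge of $A$, and the edge $S P_z$ shares the $4$-cycle with it. Because $z \mapsto w_z$ is injective, so is $z \mapsto S U_z$, giving $|Z| \le m$. Combining the two steps, $\deg_{G'}(S) = m + |Z| \le 2m \le 2\alpha^\dagger(G', S)$, which is the required contrapositive.

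I expect the main obstacle to be the verification that $Y_z = (S - \{x, z\}) \cup \{w_z, z'\}$ is genuinely a $\gamma$-set, since the entire injection $Z \hookrightarrow A$ rests on the existence of these $4$-cycles; the domination check must carefully use that $pn(x, S) = \{x\}$, that $pn(z, S) \subseteq \{z, z'\}$, and that $w_z$ is adjacent to no other vertex of $S$. Everything else is bookkeeping about which swaps produce edges of $G'$ rather than leaves.
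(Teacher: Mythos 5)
Your proof is correct and follows essentially the same route as the paper's: deduce that only $x$ can lack an $S$-external private neighbour, partition the edges of $G'$ at $S$ into those swapping $x$ (no two in a common $4$-cycle by Proposition~\ref{prop:4cycle}, hence at most $\alpha^\dagger(G',S)$ of them) and those swapping some $z\in Z$, and inject $Z$ into the first class via the distinct common neighbours $w_z$ of $z$ and $x$. The only differences are cosmetic (contrapositive via Proposition~\ref{prop:gt:spn} rather than contradiction via Proposition~\ref{prop:gt:cart-spn}), and your explicit $4$-cycle $S\,U_z\,Y_z\,P_z$ usefully fills in the step the paper leaves implicit, namely that each $U_z=(S-\{x\})\cup\{w_z\}$ lies in $G'$ and not among the leaves.
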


\begin{proof}
Suppose $G'$ is not a Cartesian product graph.  If $L_j = (S - \{x\}) \cup \{y_j\}$ (Lemma~\ref{lem:gt:stem}), then it follows from Proposition~\ref{prop:gt:cart-spn}, that $x$ has no $S$-external private neighbour.  By  Proposition~\ref{prop:4cycle} at $x$ can be swapped from $S$ in $G'$ at most $\alpha^\dagger(G', S)$ times, so $\deg_T(x) \le k + \alpha^\dagger(G', S)$.  Let $\{x\} \cup Z$ be the set of vertices that can be swapped from $S$.  Then $\deg_{G'}(S) = (\deg_T(x)-k)+|Z|$. It can be shown, as in previous proofs, each $z\in Z$ has a distinct common neighbour with $x$ in $G'$, so $|Z| \le (\deg_T(x)-k)$ and hence $\deg_{G'}(S) \le 2 (deg_T(x)-k)\le 2 \alpha^\dagger(G', S)$, which is a contradiction.  Hence, $G'$ is a Cartesian product graph.
\end{proof}

\begin{prop}
Let $G$ be the $\gamma$-graph of a tree $T$, $S$ be a stem of $G$, $L_1, L_2, \ldots,$ $L_k$ be the leaves of $S$, and $G' = G - \{L_1, L_2, \ldots, L_k\}$.  If $\deg_{G'}(S) = 2$, $S$ is in the 4-cycle $PQRS$, and every vertex adjacent to $Q$ is in a 4-cycle with $P$ or $R$, then $G'$ is a Cartesian product graph.
\end{prop}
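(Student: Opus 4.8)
The plan is to split on whether the distinguished vertex of $S$ supplied by Lemma~\ref{lem:gt:stem} admits an $S$-external private neighbour. By Lemma~\ref{lem:gt:stem} there are $x\in S$ and $y_1,\dots,y_k\notin S$ with $x\sim y_i$ and $L_i=(S-\{x\})\cup\{y_i\}$, and by Lemmas~\ref{lem:gt:pn} and~\ref{lem:gt:change} every vertex of $S$ other than $x$ has an $S$-external private neighbour. If $x$ also has one, then every vertex of $S$ does, and since $\deg_{G'}(S)=2\ge 2$, Proposition~\ref{prop:gt:spn} gives that $G'$ is a Cartesian product graph. So it remains to treat the case $pn(S,x)=\{x\}$, and I expect the product to appear with a $K_2$ factor.

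For that case I would first pin down the two swap directions at $S$. Applying Proposition~\ref{prop:4cycle} to $PQRS$ (with $W=S$) yields distinct $a,b\in S$ and $c,d\notin S$ with $a\sim c$, $b\sim d$, $P=(S-\{a\})\cup\{c\}$, $R=(S-\{b\})\cup\{d\}$, $Q=(S-\{a,b\})\cup\{c,d\}$, and $P,R$ are exactly the two neighbours of $S$ in $G'$. I would then show $x\in\{a,b\}$. Otherwise $a$, being swapped out to reach $P$, has a unique $S$-external private neighbour, and comparing $pn(L_j,a)$ with $pn(S,a)$ via Lemma~\ref{lem:gt:change} produces a common neighbour $p$ of $a$ and $x$; since $x$ is self-private the swap $x\to p$ gives the leaf $(S-\{x\})\cup\{p\}$, while promoting this swap through $P$ (Observation~\ref{basic}) completes a $4$-cycle on $S$, $P$, $(P-\{x\})\cup\{p\}$, $(S-\{x\})\cup\{p\}$, contradicting that the last set is a leaf of $G$. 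Hence $x\in\{a,b\}$, say $x=a$, so direction one is $x\leftrightarrow c$ and direction two is $b\leftrightarrow d$. Because $pn(S,x)=\{x\}$, Observation~\ref{basic} gives $pn(P,c)=\{x\}$, and more generally one checks that $x$ is the only private neighbour of $c$ in every $\gamma$-set of $G'$ containing $c$; thus $c$ can only be swapped back to $x$, so the $x\leftrightarrow c$ direction is a single $K_2$.

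Finally I would assemble the product. Partition $V(G')$ according to which of $x,c$ the $\gamma$-set contains, giving an $x$-layer and a $c$-layer exchanged by the swap $x\leftrightarrow c$, which is always available by Observation~\ref{spn}; the $4$-cycle $PQRS$ is the archetypal square showing that a direction-two move commutes with this swap. To obtain $G'=K_2\bbox G_2$, with $G_2$ the subgraph on the $x$-layer, I must rule out any third swap direction and show the two layers are perfectly matched. This is where the hypothesis on $Q$ enters: a neighbour of $Q$ coming from a swap of neither type would, by Propositions~\ref{lem:no-link} and~\ref{adjacent} together with Theorem~\ref{thm:noK23} and the bipartiteness of Theorem~\ref{bipartite}, fail to lie in a $4$-cycle with $P$ or $R$, contradicting the hypothesis. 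Propagating this from $Q$ through the connected graph $G'$ (Theorem~\ref{thm:gt:con}) should force every edge of $G'$ to be parallel to $SP$ or $SR$ and each direction-two edge on one layer to be matched on the other, whence $V(G')=\{x,c\}\times V(G_2)$ and $G'=K_2\bbox G_2$, a Cartesian product graph.

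The main obstacle is this last step: converting the purely local hypothesis on the neighbourhood of $Q$ into a global square property on all of $G'$, that is, showing that no $\gamma$-set of $G'$ ever admits a swap outside the two established directions and that the $x$- and $c$-layers are completely matched. By comparison, disposing of the case $x\notin\{a,b\}$ through the leaf-in-a-$4$-cycle argument is delicate but essentially routine.
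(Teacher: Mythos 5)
Your opening reduction is fine and matches the paper's: if the distinguished vertex $x$ of Lemma~\ref{lem:gt:stem} has an $S$-external private neighbour then every vertex of $S$ does and Proposition~\ref{prop:gt:spn} applies, so one may assume $pn(S,x)=\{x\}$. But the heart of your argument --- ``propagating this from $Q$ through the connected graph $G'$ should force every edge of $G'$ to be parallel to $SP$ or $SR$'' --- is exactly the step you never carry out, and you say so yourself. This is a genuine gap, not a routine verification: the hypothesis is purely local (it constrains only the neighbours of the single vertex $Q$), and there is no a priori mechanism by which a square condition at $Q$ transfers to $\gamma$-sets at distance $3, 4, \ldots$ from $S$ in $G'$. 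Propositions~\ref{lem:no-link} and~\ref{adjacent} constrain how 4-cycles meeting at a common vertex interact, but they do not rule out a swap in some remote $\gamma$-set that breaks the two-layer structure, and nothing in your sketch converts such a remote failure back into a violation of the hypothesis at $Q$. A secondary problem is that your target decomposition $G'=K_2\bbox G_2$ is stronger than what is true in general: the correct factors are $\gamma$-graphs of the two pieces of a tree, and the ``$x\leftrightarrow c$ factor'' need not be $K_2$ (the fact that $c$ itself can only be swapped back to $x$ does not prevent other vertices from acquiring a swap after the move $x\to c$, by Lemma~\ref{lem:gt:change}).

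The paper's proof supplies precisely the missing mechanism by working in the tree rather than in $G'$. After reducing to $pn(S,x)=\{x\}$, it uses Lemma~\ref{alpha} to get $\alpha^\dagger(G',S)=1$, prunes $T$ down to a tree $T'$ with $T'(\gamma)\cong G'$ and $\deg_{T'}(x)=1$, identifies the two swaps from $S$ as $x\leftrightarrow w$ and $z\leftrightarrow v$ with $w$ a common neighbour of $x$ and $z$, and then splits $T'$ at the edge $zw$ into $T_1+T_2$. The global claim is then ``every $\gamma$-set of $T'$ is a $\gamma$-set of $T_1+T_2$'' --- a statement about the tree, not about distances in $G'$ --- and its proof by contradiction manufactures, from any single offending $\gamma$-set $A$, a specific set $O=(Q-\{v\})\cup\{t\}$ adjacent to $Q$ that lies in no 4-cycle with $P$ or $R$. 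That is how the local hypothesis at $Q$ does global work. The conclusion $G'\cong(T_1+T_2)(\gamma)\cong T_1(\gamma)\bbox T_2(\gamma)$ then follows because no swap ever crosses the edge $zw$. If you want to salvage your layer-based approach, you would need an analogous device that traces an arbitrary violation of the product structure back to a single bad neighbour of $Q$; without it the proof does not close.
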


\begin{proof}
Suppose $G'$ is not a Cartesian product.  Let $L_j = (S - \{x\}) \cup \{y_j\}$ (Lemma~\ref{lem:gt:stem}), then it follows from Proposition~\ref{prop:gt:cart-spn}, that $x$ has no $S$-external private neighbour.  It follows from Lemma~\ref{alpha}, that $\deg_{G'}(S) \le 2 \alpha^\dagger(G', S)$ and hence $\alpha^\dagger(G', S) = 1$.  Rooting $T$ at $x$ each descendant of $y_j$ in $S$ has at least two $L_j$-external private neighbours, each descendant of $y_j$ in $S$ is in every $\gamma$-set of $T$.  Delete each $y_j$ from $T$ and let $T'$ be the component containing $x$.  Then $G' \cong T'(\gamma)$.

 As $\alpha^\dagger(G', S) = 1$, it follows from Proposition~\ref{prop:4cycle} only one swap in $G'$ can use $x$, so $\deg_{T'}(x) = 1$.  If $z$ is the other swap possible from $S$, then $x$ and $z$ have a common neighbour $w$, and $pn(z, S) = \{v\}$.  Hence we may assume without loss of generality that the neighbours of $S$ are $P = (S - \{z\}) \cup \{v\}$ and $R = (S - \{x\}) \cup \{w\}$. By Proposition~\ref{prop:4cycle}, $Q=(S - \{x,z\}) \cup \{w,v\}$.  Let $T_1$ be the component of $T' - zw$ containing $w$ and $T_2$ be the component of $T' - zw$ containing $z$.  Let $T_1+T_2$ be the disjoint union of $T_1$ and $T_2$. 
 
Suppose $A$ is a $\gamma$-set $T'$ but not a $\gamma$-set of $T_1+T_2$.   If $w \notin N_{T_1+T_2}[A]$, then $x \notin N_{T'}[A]$, which contradicts that $A$ is a $\gamma$-set of $T'$. Hence, we have $N_{T_1+T_2}[A]=V(T')-\{z\}$ and $w \in A$.  As $v$ is dominated by $A$, but $v$ is not in $A$, there exists a neighbour of $v$, $t\in A$. As $pn(z, S) = \{v\}$, it follows that $O=(S-\{x, z\})\cup \{w, t\}$ is a $\gamma$-set of $T'$. Hence, $O = (Q - \{v\}) \cup \{t\}$ is a $\gamma$-set of $T'$ with $v\sim t$ and $z \in pn(w, O)$ and therefore $O$ is adjacent to $Q$.  However, by Proposition~\ref{prop:4cycle}, $O$ does not form a 4-cycle with $P$, as $(O - \{w\}) \cup \{x\}$ is not a $\gamma$-set of $T'$, and $O$ does not form a 4-cycle with $R$, as $(R- \{v\}) \cup \{t\}$ is not a minimal dominating set ($v\notin R$).  Hence, every $\gamma$-set of $T'$ is a $\gamma$-set of $ T_1 + T_2$.  Further, since $x$ or $w$ is in every $\gamma$-set in $G'$, no swap in $T'$ uses the edge $zw$.  Therefore, $G' \cong T'(\gamma) \cong (T_1 + T_2)(\gamma) \cong T_1(\gamma) \bbox T_2(\gamma)$, which is a contradiction.
\end{proof}


\begin{thebibliography}{99}

\bibitem{FHHH11} Fricke, G., Hedetniemi, S.M., Hedetniemi, S.T., Hutson, K.R. $\gamma$-graphs of graphs. {\em Discuss. Math. Graph Theory} {\bf 2011}, {\em 31}, 517--531.

\bibitem{SS08} Subramanian, K., Sridharan, N. $\gamma$-graph of a graph. {\em Bull. Kerala Math. Assoc.} {\bf 2008}, {\em 5}, 17--34.

\bibitem{E15} Edwards, M., MacGillivray, G., Nasserasr, S. Reconfiguring minimum dominating sets: The $\gamma$-graph of a tree. {\em Discuss. Math. Graph Theory} {\bf 2018}, {\em 38}, 703--716.

\bibitem{CHH11} Connelly, E., Hutson, K.R., Hedetniemi, S.T. A note on $\gamma$-graphs. {\em AKCE J. Graphs Combin.} {\bf 2011}, {\em 8}, 23--31.

\bibitem{MN} Mynhardt, C.M., Nasserasr, S.  Reconfiguration of colourings and dominating sets in graphs. In {\em 50 Years of Combinatorics, Graph Theory, and Computing}; Chung, F., Graham, R., Hoffman, F., Hogben, L., Mullin, R., West, D., Eds; CRC Press, to appear.

\bibitem{CGH75} Cockayne, E., Goodman, S., Hedetniemi, S. A linear algorithm for the domination number of a tree. {\em Inform. Process. Lett.} {\bf 1975}, {\em 4}, 41--44.

\bibitem{Rote} Rote, G.  The maximum number of minimal dominating sets in a tree, {\em Proc. Thirtieth Annual ACM-SIAM Symposium on Discrete Algorithms}, SIAM {\bf 2019}.

\bibitem{S60} Sabidussi, S. Graph multiplication. {\em Math. Z.} {\bf 1960}, {\em 72}, 446--457.

\bibitem{V63} Vizing, V.G. The Cartesian product of graphs. {\em Vycisl. Sistemy} {\bf 1963}, {\em 9}, 30--43.

\end{thebibliography}
\end{document}